\newcommand{\ad}{\mathrm{ad}}
\newcommand{\supp}{\mathrm{supp}}
\newcommand{\Aut}{\mathrm{Aut}}
\newcommand{\ZZ}{\mathbb{Z}}
\newcommand{\Spec}{\mathrm{Spec}}
\newcommand{\Vir}{\mathrm{Vir}}
\newcommand{\diag}{\mbox{diag}}
\newcommand{\ch}{\mathrm{char}}
\newcommand{\0}{{\bf 0}}
\newcommand{\1}{{\bf 1}}
\theoremstyle{plain}
\newtheorem{theorem}{Theorem}[section]
\newtheorem{theorem*}{Theorem}
\newtheorem{corollary*}{Corollary}
\newtheorem{corollary}[theorem]{Corollary}
\newtheorem{lemma}[theorem]{Lemma}
\newtheorem{proposition}[theorem]{Proposition}
\newtheorem*{claim*}{Claim}
\theoremstyle{definition}
\newtheorem{definition}[theorem]{Definition}
\newtheorem{definition*}{Definition}
\newtheorem*{notation}{Notation}
\newtheorem{example}[theorem]{Example}
\newtheorem{remark}[theorem]{Remark}
\renewcommand{\phi}{\varphi}
\title{Code algebras, axial algebras and VOAs}
\author{Alonso Castillo-Ramirez\footnote{Departamento de Matematicas, Centro Universitario de Ciencias Exactas e Ingenierias, Universidad de Guadalajara, Mexico, email: alonso.castillor@academicos.udg.mx
}, Justin M\textsuperscript{c}Inroy\footnote{Heilbronn Institute for Mathematical Research, School of Mathematics, University of Bristol, University Walk, Bristol, BS8 1TW, UK, email: justin.mcinroy@bristol.ac.uk} and Felix Rehren}
\begin{document}
\maketitle

\begin{abstract}
Inspired by code vertex operator algebras (VOAs) and their representation theory, we define code algebras, a new class of commutative non-associative algebras constructed from binary linear codes. Let $C$ be a binary linear code of length $n$. A basis for the code algebra $A_C$ consists of $n$ idempotents and a vector for each non-constant codeword of $C$. We show that code algebras are almost always simple and, under mild conditions on their structure constants, admit an associating bilinear form. We determine the Peirce decomposition and the fusion law for the idempotents in the basis, and we give a construction to find additional idempotents, called the $s$-map, which comes from the code structure.  For a general code algebra, we classify the eigenvalues and eigenvectors of the smallest examples of the $s$-map construction, and hence show that certain code algebras are axial algebras.  We give some examples, including that for a Hamming code $H_8$ where the code algebra $A_{H_8}$ is an axial algebra and embeds in the code VOA $V_{H_8}$.
\end{abstract}


\section{Introduction}

Vertex operator algebras (VOAs) were first considered by physicists in connection with chiral algebras and 2D conformal field theory, and subsequently by mathematicians who noticed intriguing links between finite simple groups and modular functions, two apparently unrelated mathematical objects.  Essentially, a VOA is an infinite dimensional graded algebra with infinitely many different products which are linked in an intricate way. The prototypical example is the moonshine VOA, denoted by $V^\natural$, which has the Griess algebra as the weight two graded part and the Monster sporadic simple group $M$ as the automorphism group. Despite their relevance, VOAs are still mysterious objects: they have a deep theory and are quite difficult for explicit calculations.

Introduced by Hall, Rehren and Shpectorov in \cite{Axial1}, and extending earlier work by Ivanov on Majorana algebras \cite{I09}, axial algebras provide an axiomatic approach to better understand some important properties of VOAs. An \emph{axial algebra} is a commutative non-associative algebra generated by semi-simple primitive idempotents (i.e.\ the adjoint action decomposes the algebra as a direct sum of eigenspaces and the $1$-eigenspace is one-dimensional). Furthermore, the eigenvectors for any of the given idempotents multiply together in a specific way as given by a table called the \emph{fusion law}. Majorana algebras are a special case of axial algebras, directly linked with the Griess algebra and $V^\natural$, in which all idempotents in the generating set have eigenvalues $1$, $0$, $\frac{1}{4}$ and $\frac{1}{32}$, and satisfies a specific fusion law. Many axial algebras (Majorana algebras in particular) also admit a symmetric bilinear form that associates with the algebra product (i.e.\ $(v, u\cdot w) = (v \cdot u ,w)$ for all $v,u,w$); this is called a \emph{Frobenius form}.

Inspired by the axiomatic approaches to VOAs described above, we introduce \emph{code algebras}, a new class of commutative non-associative algebras constructed from binary linear codes.  Our construction is an axiomatisation of the construction of code VOAs. These were first studied by Miyamoto in \cite{M2, M3}, and by Dong, Griess and H\"ohn in \cite{DGH}, and they form an important class of VOAs whose representation theory is governed by two binary linear codes. In \cite{Lya}, Lam and Yamauchi show that every framed VOA $V$ (such as $V=V^\natural)$ has a uniquely defined code sub VOA and $V$ is a simple current extension of its code sub VOA. Moreover, Miyamoto provides a new construction of $V^\natural$ in this way in \cite{M4}. Every code VOA has a code algebra embedded in it, however code algebras are a wider class of algebras than those embeddable in code VOAs.

\begin{definition*}\label{CodeAlgebra}
Let $C \subseteq \mathbb{F}_2^n$ be a binary linear code of length $n$, $\mathbb{F}$ a field and $\Lambda$ be a collection of \emph{structure parameters}
\[
\Lambda := \left\{ a_{i,\alpha}, b_{\alpha,\beta}, c_{i,\alpha} \in \mathbb{F}  : i \in \supp(\alpha), \alpha, \beta \in C^* , \beta \neq \alpha, \alpha^c \right\}.
\]
where $C^* :=C - \{\bf{0},\bf{1}\}$.  The \emph{code algebra} $A_C(\Lambda)$ is the commutative algebra over $\mathbb{F}$ with basis
\[
\{ t_i : i =1, \dots, n \} \cup \{ e^{\alpha} : \alpha \in C^* \},
\]
and multiplication given by
\begin{align*}
t_i \cdot t_j & = \delta_{i,j}t_i \\
t_i \cdot e^\alpha & = \begin{cases} 
a_{i,\alpha} \, e^\alpha & \text{if } \alpha_i = 1 \\
\mathrlap0\phantom{ \sum \limits_{i \in \supp(\alpha) }c_{i,\alpha} t_i} & \text{if } \alpha_i =0
\end{cases} \\
e^\alpha \cdot e^\beta & = \begin{cases}
b_{\alpha, \beta}\, e^{\alpha + \beta} & \text{if } \alpha \neq \beta, \beta^c \\
 \sum \limits_{i \in \supp(\alpha) }c_{i,\alpha} t_i & \text{if } \alpha = \beta  \\
0 & \text{if } \alpha = \beta^c
\end{cases}
\end{align*}
\end{definition*}

One particularly nice choice of structure parameters is where $a = a_{i, \alpha}$, $b = b_{\alpha, \beta}$ and $c= c_{i, \alpha}$ for all $i \in \supp(\alpha)$, $\alpha, \beta \in C^*$. We note that $A = A_C(\Lambda)$ embeds into the corresponding code VOA if $a = \frac{1}{4}$ and $c= 4b^2$.  We say the algebra $A$ is \emph{non-degenerate} if $\supp(C) = \{1, \dots, n\}$, $|C^*| >0$ and the structure parameters are all non-zero.

Code algebras are generically non-associative and they are almost always simple. Indeed, the following is our first important result.

\begin{theorem*}\label{intro:simple}
A non-degenerate code algebra $A_C$ is simple unless $C = \{ \bf{0}, \bf{1},\allowbreak \alpha, \alpha^c \}$.  In the latter case, the algebra has exactly two non-trivial proper ideals.
\end{theorem*}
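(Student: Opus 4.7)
The plan is to analyse an arbitrary nonzero ideal $I$ of $A := A_C(\Lambda)$ via the commuting diagonalizable operators $\phi_i : x \mapsto t_i \cdot x$. Under the family $(\phi_i)_{i=1}^n$, the given basis simultaneously diagonalizes: $t_j$ has eigenvalue profile equal to the standard unit vector $\mathbf{e}_j \in \mathbb{F}^n$, while $e^\alpha$ has profile whose $i$-th entry is $a_{i,\alpha}$ for $i \in \supp(\alpha)$ and $0$ otherwise. Non-degeneracy ($a_{i,\alpha} \neq 0$) together with the fact that a vector in $\mathbb{F}_2^n$ is determined by its support shows that distinct basis vectors have distinct profiles, with the sole exception of $t_j$ and $e^{\mathbf{e}_j}$ coinciding in profile precisely when the weight-one codeword $\mathbf{e}_j$ lies in $C$ and $a_{j,\mathbf{e}_j} = 1$. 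Consequently each joint eigenspace is one-dimensional except for a handful of at-most two-dimensional degenerate cases.

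The first step is to show that if $I$ contains a single basis vector $e^\alpha$, for some $\alpha \in C^*$, and $C \neq \{\0, \1, \alpha, \alpha^c\}$, then $I = A$. The relation $e^\alpha \cdot e^\beta = b_{\alpha,\beta} e^{\alpha+\beta}$ places every $e^\gamma$ with $\gamma \in C^* \setminus \{\alpha, \alpha^c\}$ into $I$; since $|C^*| > 2$ in the non-exceptional case, one can then choose $\gamma, \delta \in C^*$ with $\gamma + \delta = \alpha^c$ to put $e^{\alpha^c}$ into $I$ as well. The square $e^\beta \cdot e^\beta = \sum_{i \in \supp(\beta)} c_{i,\beta} t_i$ together with multiplication by $t_k$ isolates each $t_k$, once one verifies (using $\supp(C) = \{1,\ldots,n\}$ and $|C^*|>0$) that every coordinate lies in $\supp(\beta)$ for some $\beta \in C^*$. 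The case where $I$ contains some $t_i$ reduces immediately by multiplying against any $e^\alpha$ with $i \in \supp(\alpha)$.

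Because $I$ is stable under every $\phi_i$, it decomposes as the direct sum of its intersections with the joint eigenspaces, so one can find a nonzero $x \in I$ lying in a single joint eigenspace. If that eigenspace is one-dimensional then $x$ is proportional to a basis vector and the previous step finishes. Otherwise $x = \lambda t_j + \mu e^{\mathbf{e}_j}$ with $\lambda\mu \neq 0$, $\mathbf{e}_j \in C$ and $a_{j,\mathbf{e}_j} = 1$. I would then multiply by some $e^\beta$ with $\beta \in C^* \setminus \{\mathbf{e}_j, \mathbf{e}_j^c\}$ and $j \notin \supp(\beta)$: since $t_j \cdot e^\beta = 0$, we obtain $x \cdot e^\beta = \mu \, b_{\mathbf{e}_j, \beta}\, e^{\mathbf{e}_j + \beta}$, a nonzero multiple of a basis vector in $I$, reducing to the previous step. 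A short counting argument on the codimension-one subspace $\{\beta \in C : \beta_j = 0\}$ shows such a $\beta$ fails to exist precisely when $|C| = 4$, that is, in the exceptional case.

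For $C = \{\0, \1, \alpha, \alpha^c\}$, a direct verification from the multiplication rules shows
\[
I_1 := \mathrm{span}\{t_i : i \in \supp(\alpha)\} + \mathbb{F} e^\alpha, \qquad I_2 := \mathrm{span}\{t_i : i \in \supp(\alpha^c)\} + \mathbb{F} e^{\alpha^c}
\]
are ideals of $A$ with $A = I_1 \oplus I_2$ and $I_1 \cdot I_2 = 0$. Running the preceding arguments inside each $I_k$ (which is easier, since each summand contains only one $e$-vector) shows each $I_k$ is simple, and $I_1 \cdot I_2 = 0$ prevents any further ideal from straddling the summands, leaving exactly the two non-trivial proper ideals $I_1$ and $I_2$. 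The main obstacle throughout is the two-dimensional joint-eigenspace case in the third step: it is the one place where the clean eigenvalue-profile argument breaks down, and one must carefully pick an $e^\beta$ with the right support to extract a basis element from $x$.
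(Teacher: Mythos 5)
Your proposal is correct on the main statement and takes a genuinely different route from the paper. The paper isolates a basis vector from a nonzero $x\in I$ by choosing $\alpha$ with $\lambda_\alpha\neq 0$ of maximal support and multiplying successively by $t_j$ for $j\in\supp(\alpha)$ (then treating the purely toral case separately); you instead use that $I$ is invariant under the commuting diagonalisable operators $\ad_{t_i}$, so it splits into joint eigenspaces, which are one-dimensional except for the coincidence of $t_j$ with $e^{\mathbf{e}_j}$ when the weight-one word $\mathbf{e}_j$ lies in $C$ and $a_{j,\mathbf{e}_j}=1$; that case you resolve by multiplying by $e^\beta$ with $j\notin\supp(\beta)$, $\beta\neq\mathbf{e}_j,\mathbf{e}_j^c$, and your counting is sound: under non-degeneracy such a $\beta$ fails to exist only for $C=\{\0,\1,\mathbf{e}_j,\mathbf{e}_j^c\}$. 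Once a basis vector lies in $I$, your chain of multiplications (products $e^\alpha e^{\alpha+\gamma}$, then squares to reach the torals) is the same as the paper's. What your route buys is precision exactly where the paper's displayed computation is loosest: when the maximal $\alpha$ has $|\alpha|=1$, a single toral multiplication does not kill the $t_j$-term, and if moreover $a_{j,\alpha}=1$ no toral multiplications separate $t_j$ from $e^\alpha$, so your $e^\beta$ trick is genuinely needed there; what the paper's route buys is brevity, avoiding the invariant-subspace decomposition. For the exceptional case both you and the paper (Lemma \ref{non-simpleA}, ``easy calculation'') verify $A=I_1\oplus I_2$ with $I_1I_2=0$, so every ideal of $A$ is a sum of ideals of the summands; the one weak point is your assertion that ``running the preceding arguments inside each $I_k$'' shows each $I_k$ is simple. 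That argument does run when $|\alpha|,|\alpha^c|\geq 2$ (or when $a_{j,\alpha}\neq 1$), but in the sub-case $|\alpha|=1$, $a_{j,\alpha}=1$ it does not, and indeed $\langle t_j,e^\alpha\rangle\cong\mathbb{F}[x]/(x^2-c_{j,\alpha})$ with $t_j$ acting as identity, which is not simple when $c_{j,\alpha}$ is a square, so the count ``exactly two'' fails there. This defect is inherited from the statement and from Lemma \ref{non-simpleA} itself (compare the exceptional case singled out in Theorem \ref{frob}), so it is not a flaw of your strategy relative to the paper, but it would be worth excluding that sub-case explicitly rather than claiming the summands are always simple.
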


A code algebra has some obvious idempotents, namely the $t_i$s. If $a_{i, \alpha} \neq 1$ for all $\alpha \in C^*$, then $t_i$ is primitive and semi-simple and we completely determine its fusion law which is given in Table \ref{Fusion law} of Proposition \ref{fusiontable}.  We note that, when the structure parameters are $(a,b,c)$ with $a \neq 1$ there are exactly three eigenvalues, $1$, $0$ and $a$, and moreover the fusion law is the same as for axial algebras of Jordan type \cite{Axial2}.  Furthermore, we show that $t_i$ satisfies the so-called \emph{Seress condition} (i.e.\ $A_0 A_\lambda \subseteq A_\lambda$, for all eigenvalues $\lambda \neq 1$) if and only if $t_i$ has exactly one eigenvalue not equal to $0$ or $1$.  (This is a small remainder of associativity in a non-associative setting, see \cite[Proposition 3.9]{Axial1}.)

If $a_{i, \alpha} \neq 1$ for all $\alpha \in C^*$ with $\alpha_i = 1$ and $\ch(\mathbb{F}) \neq 2$, the fusion law for $t_i$ induces a $\ZZ_2$-grading on the algebra, so we may define an involutory algebra automorphism $\tau_i$, called a \emph{Miyamoto involution}. Provided the structure parameters are regular (see Definition \ref{regular}), the automorphism group $\Aut(C)$ of the code also has a natural induced action on $A_C$.

\begin{theorem*}\label{intro:grp}
Suppose $\ch(\mathbb{F}) \neq 2$ and let $A_C$ be a non-degenerate code algebra with regular structure parameters where $a_{i, \alpha} \neq 1$ for all $i \in \supp(\alpha)$, $\alpha \in C^*$. Then,
\[
G = M{:}\Aut(C) \leq \Aut(A_C)
\]
where $M = \langle \tau_i : i = 1, \dots, n \rangle$.
\end{theorem*}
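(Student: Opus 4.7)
The plan is to establish three things in turn: that each $\tau_i$ is a well-defined algebra automorphism, that $\Aut(C)$ embeds in $\Aut(A_C)$ via the natural action on the basis, and that these subgroups combine to give the claimed semidirect product.

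For the Miyamoto involutions, the paragraph preceding the theorem already notes that, under the stated hypotheses, the fusion law of $t_i$ induces a $\ZZ_2$-grading on $A_C$, with even part $A_1(t_i)\oplus A_0(t_i)$ and odd part $\bigoplus_{\alpha\in C^*,\, \alpha_i=1} A_{a_{i,\alpha}}(t_i)$. Since $\ch(\mathbb{F})\neq 2$, the linear map $\tau_i$ acting as $+1$ on the even part and $-1$ on the odd part is an involution, and it is an algebra automorphism because the grading is compatible with the product.

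For the action of $\Aut(C)$, given $\sigma\in\Aut(C)$ define $\hat\sigma$ on the basis by $\hat\sigma(t_i)=t_{\sigma(i)}$ and $\hat\sigma(e^\alpha)=e^{\sigma(\alpha)}$ and extend linearly. Checking $\hat\sigma(x\cdot y)=\hat\sigma(x)\cdot\hat\sigma(y)$ splits into the three product types of the defining multiplication. The $t_i\cdot t_j$ case is automatic since $\sigma$ is a permutation. The cases $t_i\cdot e^\alpha$, $e^\alpha\cdot e^\beta$ with $\beta\neq\alpha,\alpha^c$, and $e^\alpha\cdot e^\alpha$ reduce, respectively, to the identities
\[
a_{\sigma(i),\sigma(\alpha)}=a_{i,\alpha},\qquad b_{\sigma(\alpha),\sigma(\beta)}=b_{\alpha,\beta},\qquad c_{\sigma(i),\sigma(\alpha)}=c_{i,\alpha},
\]
which are exactly the content of regularity (Definition \ref{regular}); and the case $\beta=\alpha^c$ is covered by the observation that $\sigma$ preserves complementation in $\mathbb{F}_2^n$ and $\supp(\sigma(\alpha))=\sigma(\supp(\alpha))$, so the indexing of the sum defining $e^\alpha\cdot e^\alpha$ transforms correctly.

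For the group structure, I observe first that every $\tau_i$ fixes each $t_j$ pointwise: for $j\neq i$ the relation $t_i\cdot t_j=0$ places $t_j$ in $A_0(t_i)$, and $t_i$ itself lies in $A_1(t_i)$, both even subspaces. Hence elements of $M$ fix $\{t_1,\ldots,t_n\}$ pointwise, whereas any non-identity $\sigma\in\Aut(C)$ permutes the $t_i$ non-trivially (the $\Aut(C)$-action on $\{1,\dots,n\}$ is faithful by definition, and by non-degeneracy covers all coordinates), so $M\cap\Aut(C)=1$. Finally, because $\hat\sigma$ is an algebra automorphism sending $t_i$ to $t_{\sigma(i)}$, it carries each $\lambda$-eigenspace of $\ad_{t_i}$ onto the corresponding $\lambda$-eigenspace of $\ad_{t_{\sigma(i)}}$, and hence the grading for $t_i$ onto that for $t_{\sigma(i)}$. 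Comparing actions on even/odd parts yields $\hat\sigma\,\tau_i\,\hat\sigma^{-1}=\tau_{\sigma(i)}$, so $\Aut(C)$ normalizes $M$ and the subgroup they generate is $M\rtimes\Aut(C)$ as required.

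The main obstacle is the second step: although each verification is a short calculation, one must be careful that the regularity conditions, as phrased in Definition \ref{regular}, suffice to cover all of the sub-cases of the multiplication, including the symmetry in $b$ needed for the unordered product $e^\alpha\cdot e^\beta$ and the correct reindexing of the sum $\sum_{i\in\supp(\alpha)} c_{i,\alpha} t_i$. Everything else is formal once the $\ZZ_2$-grading and the permutation action are in place.
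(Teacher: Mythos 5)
Your proposal is correct and follows essentially the same route as the paper: the paper cites its Lemma on the induced $\Aut(C)$-action (which you re-derive from regularity), gets $M\cap\Aut(C)=1$ from $M$ fixing the $t_i$ while $\Aut(C)$ permutes them faithfully, and obtains normality from the relation $\tau_i g = g\tau_{ig}$, which is the coordinate form of your eigenspace-transport argument $\hat\sigma\tau_i\hat\sigma^{-1}=\tau_{\sigma(i)}$. The only difference is presentational, so there is nothing to add.
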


We may also define Frobenius forms on code algebras.

\begin{theorem*}\label{intro:frob}
A non-degenerate code algebra $A_C$ admits a Frobenius form if and only if conditions on the structure parameters {\textup(}see Theorem $\ref{frob}$\textup{)} are satisfied.  If so, the form is uniquely defined up to scaling by a choice of $\lambda_i$, $i = 1, \dots, n$, and is given by $(t_i,t_j)= \delta_{i,j} \lambda_i$, $(t_i, e^\alpha) = 0$ and $(e^\alpha, e^\beta) = \frac{c_{i, \alpha}}{a_{i, \alpha}}\lambda_i \delta_{\alpha,\beta}$, where $\alpha_i =1$, apart from one example with $C = \{\0, \1, \alpha, \alpha^c\}$.
\end{theorem*}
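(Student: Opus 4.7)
The strategy is to derive the form's values from the associativity relation $(x \cdot y, z) = (x, y \cdot z)$ applied to basis triples, then read off the necessary conditions on the structure parameters $\Lambda$ as consistency requirements; conversely, assuming those conditions, to show that the resulting formulas define a genuine Frobenius form by verifying associativity on every basis triple.

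First I would apply associativity to $(t_i, t_j, t_k)$ to conclude $(t_i, t_j) = \delta_{i,j} \lambda_i$ for scalars $\lambda_i := (t_i, t_i)$. Next, applying the identity to triples of the form $(t_j, t_i, e^\alpha)$ for a suitably chosen $j \neq i$ forces $(t_i, e^\alpha) = 0$; the only loose case is when $\supp(\alpha) = \{i\}$, which is handled by pairing with a second codeword $\beta \in C^*$ satisfying $\beta_i = 0$, and such a $\beta$ exists by non-degeneracy except in a small family containing the exceptional code. The same device applied to $(t_i, e^\alpha, e^\beta)$ for $\alpha \neq \beta$ yields $(e^\alpha, e^\beta) = 0$. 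Associativity on $(t_i, e^\alpha, e^\alpha)$ with $\alpha_i = 1$ reads $a_{i, \alpha}(e^\alpha, e^\alpha) = c_{i, \alpha}\lambda_i$, giving the claimed formula and the first consistency condition
\[
\frac{c_{i, \alpha}}{a_{i, \alpha}} \lambda_i = \frac{c_{j, \alpha}}{a_{j, \alpha}} \lambda_j \quad \text{for all } i, j \in \supp(\alpha).
\]
Further conditions arise from triples $(e^\alpha, e^\beta, e^{\alpha+\beta})$ with all three vectors in $C^*$: these yield $b_{\alpha, \beta}(e^{\alpha+\beta}, e^{\alpha+\beta}) = b_{\beta, \alpha+\beta}(e^\alpha, e^\alpha)$, which, once $(e^\gamma, e^\gamma)$ is substituted, constrains the parameters $a,b,c$ against the $\lambda_i$.

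For the converse, I would define the form by the stated formulas and verify $(x \cdot y, z) = (x, y \cdot z)$ for all basis triples. The main obstacle is the case analysis on triples $(e^\alpha, e^\beta, e^\gamma)$, since the product $e^\alpha \cdot e^\beta$ splits into several subcases (equal, complementary, sum in $C^*$) and each must match the symmetrically computed right-hand side; a careful but finite check using the consistency conditions extracted above closes this. The exception at $C = \{\0, \1, \alpha, \alpha^c\}$ appears precisely because no triple $(e^\alpha, e^\beta, e^{\alpha+\beta})$ with all three in $C^*$ exists there (as $\alpha + \alpha^c = \1 \notin C^*$), so the cross-codeword constraints vanish, and moreover the argument forcing $(t_i, e^\alpha) = 0$ can fail when $\alpha$ has weight one, opening the door to a Frobenius form outside the stated parametric family.
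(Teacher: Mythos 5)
Your overall strategy is the same as the paper's: extract the Gram entries by applying associativity to basis triples, read off exactly conditions 1 and 2 of Theorem \ref{frob}, and locate the exception in the failure of the argument forcing $(t_i,e^\alpha)=0$ for a weight-one codeword. However, the step you give for that weight-one case would fail as stated. If $\supp(\alpha)=\{i\}$ and you pair with a codeword $\beta\in C^*$ satisfying $\beta_i=0$, then every associativity relation among $t_i$, $e^\alpha$, $e^\beta$ only involves quantities already known to vanish --- off-diagonal entries $(e^\gamma,e^\delta)$, the entry $(t_i,e^{\alpha+\beta})$ with $|\alpha+\beta|\geq 2$, and $(t_j,e^\alpha)$ for $j\neq i$ --- so no relation pins down $(t_i,e^\alpha)$. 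What is needed is $\beta\in C^*$ with $\beta\neq\alpha$ and $\beta_i=1$: then $0=(e^\alpha e^\beta,e^\beta)=(e^\alpha,(e^\beta)^2)=c_{i,\beta}(e^\alpha,t_i)$, which is the paper's argument (equivalently, use $\gamma\in C^*\setminus\{\alpha,\alpha^c\}$ with $\gamma_i=0$ and the triple $(e^\gamma,e^{\gamma+\alpha},t_i)$; note $\gamma=\alpha^c$ is useless since $e^\alpha e^{\alpha^c}=0$ and $(e^{\alpha^c})^2$ does not involve $t_i$). This is not merely cosmetic: in the exceptional code $C=\{\0,\1,\alpha,\alpha^c\}$ with $|\alpha|=1$, a codeword with $\beta_i=0$ \emph{does} exist (namely $\alpha^c$), so by your criterion the forcing argument would apply there and contradict the exceptional form you invoke at the end; the correct criterion --- existence of $\beta\in C^*\setminus\{\alpha\}$ supported at $i$ --- fails precisely for $C=\{\0,\1,\alpha,\alpha^c\}$, which is what delineates the exception.

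Two smaller points. First, the exceptional freedom in $(t_i,e^\alpha)$ also requires $a_{\supp(\alpha),\alpha}=1$: the triple $(t_i,t_i,e^\alpha)$ gives $(1-a_{i,\alpha})(t_i,e^\alpha)=0$ (equivalently, orthogonality of distinct eigenspaces as in Lemma \ref{Frobperp}), so characterising the exception by weight alone is too broad; the paper's Theorem \ref{frob} records this extra hypothesis. Second, your derivation of $(e^\alpha,e^\beta)=0$ from $(t_i,e^\alpha,e^\beta)$ produces $a_{i,\alpha}(e^\alpha,e^\beta)=b_{\alpha,\beta}(t_i,e^{\alpha+\beta})$, whose right-hand side is not yet known to vanish in the corner case $\supp(\alpha+\beta)=\{i\}$; the paper sidesteps this ordering issue by deducing all orthogonality statements at once from Lemma \ref{Frobperp} (distinct eigenspaces of a suitable $t_j$ are perpendicular). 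These defects are repairable, but as written the weight-one step is a genuine gap.
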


In comparison, Hall, Segev and Shpectorov show that axial algebras of Jordan type admit a unique Frobenius form \cite{Axial3}, but this is not known for general axial algebras.

In order for a code algebra to be an axial algebra, we must have enough idempotents to generate it.  Inspired by the example of the Hamming code VOA $V_{H_8}$, we make the following definition of the $s$-map. Given a constant weight subcode $D$ of $C$ (i.e.\ there is only one weight of codeword in $D^* := D -\{ \bf 0,1\}$) where the structure parameters supported on $D^*$ are constant $(a,b,c)$ and $v \in \mathbb{F}_2^n$, there exists an idempotent in $A_C$ of the form
\[
s(D,v) := \lambda \sum_{i \in \supp(D)} t_i + \mu \sum_{\alpha \in D^*} (-1)^{(v,\alpha)} e^\alpha
\]
where $\lambda, \mu \in \mathbb{F}$ satisfy a linear and quadratic equation respectively which are given in Proposition $\ref{smap}$. We will assume that we have taken the field large enough so that the quadratic equation has solutions.

All binary linear codes $C$ possess a constant weight subcode $D$. In fact, $D = \{ \bf 0, \alpha\}$ for $\alpha \in C^*$ is the smallest such example.  So, we call $s(D,v)$ a \emph{small idempotent}.  For idempotents in general it is difficult to find even the eigenvalues and eigenvectors, but for small idempotents we can do this.

\begin{theorem*}\label{intro:small}
Let $A_C$ be a non-degenerate code algebra on a constant weight code $C$ with structure parameters satisfying the conditions in Theorem $\ref{smallidem}$ and $a \neq \frac{1}{2|\alpha|}, \frac{1}{3|\alpha|}$. Then, the small idempotents are
\[
e_\pm := \lambda \sum_{i \in \supp(\alpha)} t_i \pm \mu e^\alpha
\]
where $\lambda = \frac{1}{2a|\alpha|}$ and $\mu^2 = \frac{\lambda - \lambda^2}{c}$. Furthermore, $e_\pm$ are primitive axes for the fusion law given in Table $\ref{tab:small}$.
\end{theorem*}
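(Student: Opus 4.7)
Since $e_\pm$ is an idempotent by Proposition~\ref{smap}, the task is to compute the spectrum of multiplication by $e_\pm$, identify its eigenspaces, verify primitivity, and check that products of eigenvectors obey the fusion law of Table~\ref{tab:small}. My plan is to exploit the sparse multiplication of a code algebra together with the constant-weight hypothesis to decompose $A_C$ as a direct sum of small $\ad_{e_\pm}$-invariant subspaces, then diagonalise each block by hand.

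The decomposition I would use is $A_C = U_1 \oplus U_0 \oplus \bigoplus_\beta V_\beta$, where $U_1 = \langle t_i : i \in \supp(\alpha)\rangle \oplus \langle e^\alpha\rangle$, $U_0 = \langle t_j : j \notin \supp(\alpha)\rangle$, and $V_\beta = \langle e^\beta, e^{\alpha+\beta}\rangle$ runs over unordered pairs in $C^*\setminus\{\alpha\}$. Invariance is immediate from the product rules, using the fact that constant weight forces $|\supp(\alpha)\cap\supp(\beta)| = |\alpha|/2$ for every $\beta \in C^*\setminus\{\alpha\}$ (so $|\alpha|$ is even). On $U_0$, $\ad_{e_\pm}$ vanishes. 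Inside $U_1$, the codimension-one subspace $\{\sum \gamma_i t_i : \sum \gamma_i = 0\}$ is a $\lambda$-eigenspace of dimension $|\alpha|-1$, and the transverse plane $\langle T, e^\alpha\rangle$ with $T = \sum_{i \in \supp(\alpha)} t_i$ carries the matrix
\[
\begin{pmatrix}\lambda & \pm \mu c\\ \pm |\alpha|\mu a & 1/2\end{pmatrix};
\]
after substituting $\lambda = \frac{1}{2a|\alpha|}$ and $\mu^2 c = \lambda - \lambda^2$ its characteristic polynomial factorises as $(x-1)(x-(\lambda - \tfrac{1}{2}))$, with $e_\pm$ itself as the $1$-eigenvector. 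Each $V_\beta$ carries the symmetric matrix $\bigl(\begin{smallmatrix}1/4 & \pm\mu b\\ \pm\mu b & 1/4\end{smallmatrix}\bigr)$, giving eigenvalues $\tfrac{1}{4}\pm\mu b$ with eigenvectors $e^\beta \pm e^{\alpha+\beta}$. A dimension count confirms that these blocks exhaust $A_C$, so the spectrum is $\{1, 0, \lambda, \lambda - \tfrac{1}{2}, \tfrac{1}{4}+\mu b, \tfrac{1}{4}-\mu b\}$.

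Primitivity then reduces to checking that no eigenspace other than $\langle e_\pm\rangle$ carries eigenvalue $1$: the hypotheses $a \neq \frac{1}{2|\alpha|}$ and $a \neq \frac{1}{3|\alpha|}$ exclude $\lambda = 1$ and $\lambda - \tfrac{1}{2} = 1$ respectively, while the conditions inherited from Theorem~\ref{smallidem} should rule out $\tfrac{1}{4}\pm\mu b = 1$.

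The hard part will be verifying the fusion law itself. I would fix an explicit eigenbasis — $e_\pm$, the differences $t_i - t_{i'}$ for $i,i' \in \supp(\alpha)$, the vectors $t_j$ with $j \notin \supp(\alpha)$, the second eigenvector of the $\langle T, e^\alpha\rangle$ plane (obtained by eliminating $e_\pm$), and the combinations $e^\beta \pm e^{\alpha+\beta}$ — and multiply them pairwise using only the four product rules of $A_C$. Products within $U_1 \oplus U_0$ are routine. The delicate case is $(e^\beta \pm e^{\alpha+\beta})\cdot(e^\gamma \pm e^{\alpha+\gamma})$: when $\gamma = \beta$ the output lies in $U_1$ and must decompose into the three eigenspaces of $U_1$ identified above, while when $\gamma \neq \beta, \alpha+\beta$ the terms $e^{\beta+\gamma}$ and $e^{\alpha+\beta+\gamma}$ recombine into eigenvectors of $V_{\beta+\gamma}$, and one has to track how the signs arising from $\pm\mu b$ combine so that the result lands in the eigenspaces prescribed by Table~\ref{tab:small}. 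Once each of these finitely many cases is verified, the fusion law, and hence the theorem, follows.
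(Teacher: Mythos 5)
Your block decomposition is, in substance, the paper's own argument repackaged: the eigenvectors you extract from $U_1$, $U_0$ and the planes $V_\beta$ (namely $t_i-t_j$, the $t_j$ off $\supp(\alpha)$, the second vector of the $\langle t_\alpha,e^\alpha\rangle$ plane, which is $2\mu c\,t_\alpha-e^\alpha$, and $e^\beta\pm e^{\alpha+\beta}$) are exactly those of Lemmas \ref{eig} and \ref{pairedeig}, with the same eigenvalues $0,\lambda,\lambda-\tfrac12,\tfrac14\pm\mu b$, and the completeness step (your block-invariance plus diagonalisation versus the paper's dimension count) and the case-by-case fusion verification are the same computations. Your $2\times 2$ matrices and the factorisation of the characteristic polynomial on $\langle t_\alpha,e^\alpha\rangle$ are correct.

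There is, however, one concrete omission: when $\1\in C$ the codeword $\alpha^c$ is not covered by your decomposition. Your pairs $V_\beta=\langle e^\beta,e^{\alpha+\beta}\rangle$ make no sense for $\beta=\alpha^c$ (since $\alpha+\alpha^c=\1\notin C^*$), your claim that every $\beta\in C^*\setminus\{\alpha\}$ meets $\alpha$ in $|\alpha|/2$ positions fails for $\beta=\alpha^c$, and your stated blocks then miss one basis vector, so the dimension count as written does not close. The fix is exactly the paper's Lemma \ref{eig}(1.ii): $e^{\alpha^c}$ is a $0$-eigenvector (it is killed by $t_i$ for $i\in\supp(\alpha)$ and by $e^\alpha$), and it must then be carried through the fusion check --- indeed the products $e^{\alpha^c}\cdot(e^\beta\pm e^{\alpha+\beta})$ are precisely where the hypothesis $b_{\alpha^c,\beta}=b_{\alpha^c,\gamma}$ of Theorem \ref{smallidem} is used, a hypothesis your sketch never invokes. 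Finally, your caveat that the hypotheses ``should rule out'' $\tfrac14\pm\mu b=1$ is not actually secured by the stated assumptions; the paper is no more careful here (it only excludes the coincidence of the $1$- and $(\lambda-\tfrac12)$-eigenspaces via $a\neq\frac{1}{3|\alpha|}$ and treats the remaining eigenvalues as generic), so this is a shared, not a new, gap.
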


In some cases, the small idempotents generate the whole algebra and so we obtain the following.

\begin{corollary*}\label{intro:smallaxial}
Suppose $C$ is a simplex or first order Reed-Muller code and $A_C(a,b,c)$ is a non-degenerate code algebra with $a \neq \frac{1}{2|\alpha|}, \frac{1}{3|\alpha|}$.  Then $A$ is an axial algebra.
\end{corollary*}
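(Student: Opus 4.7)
The plan is to show that the small idempotents produced by Theorem \ref{intro:small} already generate $A := A_C(a,b,c)$. Since each $e_\pm(\alpha) := \lambda \sum_{i \in \supp(\alpha)} t_i \pm \mu e^\alpha$, $\alpha \in C^*$, is by that theorem a primitive axis for the common fusion law of Table \ref{tab:small}, generation will immediately realise $A$ as an axial algebra.

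Let $B$ be the subalgebra generated by the $e_\pm(\alpha)$ and write $S_\alpha := \sum_{i \in \supp(\alpha)} t_i$. Summing and differencing,
\[
e_+(\alpha) + e_-(\alpha) = 2\lambda\, S_\alpha, \qquad e_+(\alpha) - e_-(\alpha) = 2\mu\, e^\alpha,
\]
and since $\lambda, \mu \neq 0$ under our hypotheses on $a$ and non-degeneracy, we obtain $S_\alpha, e^\alpha \in B$ for every $\alpha \in C^*$. The orthogonality $t_i \cdot t_j = \delta_{ij} t_i$ then gives $S_{\alpha_1} \cdots S_{\alpha_k} = \sum_{i \in \bigcap_j \supp(\alpha_j)} t_i$, so it remains only to verify the combinatorial statement that every singleton $\{i_0\}$ arises as an intersection of supports of codewords in $C^*$.

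I expect this final combinatorial step to be the sole real obstacle, and it is settled by the specific structure of the two code families. For the simplex code $\mathrm{Sim}(m)$ (so $m \geq 2$ by non-degeneracy), identify coordinates with nonzero $w \in \mathbb{F}_2^m$ and nonzero codewords with nonzero linear functionals $u$, so $\supp(c_u) = \{w \neq 0 : \langle u, w\rangle = 1\}$. For any $w_0 \neq 0$, the affine hyperplane $\{u : \langle u, w_0\rangle = 1\}$ has size $2^{m-1}$ and, not being a linear subspace, spans $\mathbb{F}_2^m$; any basis $u_1, \dots, u_m$ chosen inside it yields $\bigcap_j \supp(c_{u_j}) = \{w_0\}$. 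For $\mathrm{RM}(1,m)$, coordinates are indexed by all of $\mathbb{F}_2^m$ and the $m$ affine codewords $\phi_j(x) := x_j + 1 + (w_0)_j$ each have weight $2^{m-1}$ (hence lie in $C^*$) and satisfy $\bigcap_j \supp(\phi_j) = \{w_0\}$. Thus every $t_{i_0}$ lies in $B$, so $B = A$ and the corollary follows.
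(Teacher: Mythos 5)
Your proof is correct and follows essentially the same route as the paper: Theorem \ref{smallidem} supplies the primitive axes, linear combinations of $e_\pm(\alpha)$ recover $t_\alpha$ and $e^\alpha$, and products of the $t_\alpha$ produce every $t_i$ via intersections of supports. The only difference is that the paper deduces the support-intersection property from projectivity of the code (Lemma \ref{Cproj}, together with the remark that a constant weight code is projective exactly when it is a simplex or first order Reed--Muller code), whereas you verify it by an explicit choice of codewords in each of the two families; both justifications are sound.
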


The case of an arbitrary code (not constant weight) will be dealt with in an upcoming paper \cite{codealg2}.

The structure of the paper is as follows. In Section \ref{sec:background}, we give some basic results on linear codes and axial algebras. Code algebras are defined in Section \ref{sec:codealg} and we investigate their algebra structure, including their simplicity, automorphism group and Frobenius form, proving Theorems \ref{intro:simple}, \ref{intro:grp} and \ref{intro:frob}.  In Section \ref{sec:smap}, we give the $s$-map construction and investigate small idempotents, proving Theorem \ref{intro:small} and Corollary \ref{intro:smallaxial}. We present some examples in Section \ref{sec:examples}. The most important of these is the code algebra $A_{H_8}$ for the extended Hamming code $H_8$: this is an axial algebra that coincides with the degree 2 piece of the code VOA of $H_8$ \cite{M2,M3}. Finally, in Appendix \ref{sec:motivation}, we outline the construction of code VOAs which provides part of the motivation for the definition of code algebras.

\begin{notation}
Throughout the paper, we write statements involving $\1 \in C$, or the complement $\alpha^c$ of a codeword $\alpha$.  We do not assume that $\1 \in C$, or complements exist, just that if they do, then these statements should hold.
\end{notation}

\medskip

We would like to acknowledge a Heilbronn Collaboration Grant which made possible a visit to Bristol from the first and third authors and also a Mexican Academy of Sciences grant under the Newton Fund/CONACYT for a visit of the second author to Guadalajara.  We would like to thank Tim Burness for some helpful comments on a previous draft of this paper.

\section{Background}\label{sec:background}

We begin by reviewing some facts about codes and fixing notation, before giving the definition and some brief details about axial algebras.

\subsection{Binary linear codes}

Let $\mathbb{F}_2$ be the field with two elements. A \emph{binary linear code} $C$ of length $n$ and dimension $k$ is a $k$-dimensional subspace of $\mathbb{F}_2^n$.  We write $[n] := \{ 1, \dots, n \}$.  For any $\alpha = (\alpha_1, \dots, \alpha_n) \in \mathbb{F}_2^n$, denote its support by 
\[ \supp(\alpha) := \{ i \in [n]: \alpha_i = 1 \}, \]
and its Hamming weight by $\vert \alpha \vert :=  \vert \supp(\alpha) \vert$. The support of the code $C$ itself is defined to be $\supp(C) := \bigcup_{\alpha \in C} \supp(\alpha)$ and the weights of the codewords in $C$ is denoted $\mbox{wt}(C) := \{ \vert \alpha \vert : \alpha \in C \}$.

The \emph{\textup{(}Hamming\textup{)} distance between two codewords is $d(\alpha, \beta) = |\alpha-\beta|$.  The minimum distance of a code $C$ is the minimum distance between any two codewords.  For a linear code $C$, this is equal to the} minimum weight of a codeword in $C$.  A $[n,k,d]$-code is simply a $k$-dimensional binary linear code of length $n$ with minimum distance $d$.

Two codes $C$ and $D$ are \emph{similar} if there exists $g \in S_n$ such that $C^g = D$, where $S_n$ acts naturally on $C$ by permuting the coordinates of the codewords.  We define the automorphism group of $C$ as $\Aut(C) := \{ g \in S_n : C^g = C \}$.

We write $C^*$ for the non-constant codewords in $C$; that is, all codewords which are not $\0 := (0, \dots, 0)$, or $\1 := (1, \dots, 1)$. If ${\bf 1 } \in C$, then every $\alpha \in C$ has a complement, denoted by $\alpha^c :=  \1 + \alpha$.  Conversely, if some $\alpha \in C$ has a complement, then $\1 \in C$ and every codeword in $C$ has a complement.

Consider the usual dot product $(\cdot ,\cdot ) : \mathbb{F}_2^n \times \mathbb{F}_2^n \to \mathbb{F}_2$ given by $(u, v) := \sum_{i=1}^n u_i v_i$ for $u,v \in \mathbb{F}_2^n$.  For any $v \in \mathbb{F}_2^n$ and $k \in \{ 0,1\}$, we define
\[
C_k(v) := \{ \alpha \in C : (\alpha, v) = k \}.
\]
When the $v$ is clear, we write $C_i$ instead of $C_i(v)$.  We define $C_i^* := C_i - \{\0, \1\}$.  Note that by definition we have $\alpha \in C_{(\alpha,v)}$.

\begin{lemma}\label{innerC}
Let $C \subseteq \mathbb{F}_2^n$ be a binary linear code of length $n$. Let $v \in \mathbb{F}_2^n$ and $C_k = C_k(v)$, for $k \in \{0,1 \}$. Then:
\begin{enumerate}
\item[$1.$] The map $\alpha \mapsto (\alpha,v)$ is a homomorphism from $C$ to $\mathbb{F}_2$, viewed as additive groups. Hence, $C$ is the disjoint union of $C_0$ and $C_1$.
\item[$2.$] $C_0$ is a binary linear code, so, in particular, it is non-empty.
\item[$3.$] If $C_1$ is non-empty, then $|C_0| = |C_1| = \frac{|C|}{2}$.
\end{enumerate}
Furthermore, if ${\bf 1 } \in C$, then:
\begin{enumerate}
\item[$4.$] If $v$ has odd weight, then the complements of codewords in $C_0$ lie in $C_1$ and vice versa.
\item[$5.$] If $v$ has even weight, then each $C_k$ is closed under taking complements.
\end{enumerate}
\end{lemma}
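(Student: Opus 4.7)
The plan is to treat all five parts as direct consequences of the bilinearity of the dot product together with basic facts about group homomorphisms, so the main task is simply to organise these observations rather than to overcome any genuine obstacle.

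First I would establish (1) by observing that the dot product $(\cdot,\cdot)\colon \mathbb{F}_2^n \times \mathbb{F}_2^n \to \mathbb{F}_2$ is $\mathbb{F}_2$-bilinear, so fixing $v$ gives a linear map $\phi_v\colon C \to \mathbb{F}_2$, $\alpha \mapsto (\alpha,v)$. Then $C_0$ and $C_1$ are exactly the preimages $\phi_v^{-1}(0)$ and $\phi_v^{-1}(1)$, which partition $C$ since $\mathbb{F}_2 = \{0,1\}$ is a disjoint union.

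Part (2) is immediate: $C_0 = \ker \phi_v$ is an $\mathbb{F}_2$-subspace of $C$, hence a binary linear code, and it is non-empty as it contains $\0$. Part (3) then follows from the first isomorphism theorem: if $C_1 \neq \emptyset$ then $\phi_v$ is surjective, so $C/C_0 \cong \mathbb{F}_2$, giving $|C_0| = |C|/2$, and $|C_1| = |C| - |C_0| = |C|/2$.

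For parts (4) and (5), assuming $\1 \in C$, I would compute
\[
(\alpha^c, v) = (\1 + \alpha, v) = (\1, v) + (\alpha, v) = |v| \bmod 2 + (\alpha,v),
\]
using that $(\1, v) = \sum_{i=1}^n v_i = |v| \bmod 2$. If $|v|$ is odd then $(\alpha^c,v) = 1 + (\alpha,v)$, which interchanges $C_0$ and $C_1$, proving (4); if $|v|$ is even then $(\alpha^c,v) = (\alpha,v)$, so each $C_k$ is closed under complementation, proving (5). There is no real obstacle here; the only care needed is to keep track of the case distinction on the parity of $|v|$ and to note that Lemma parts (4)–(5) are only asserted when $\1 \in C$, so that the complement $\alpha^c = \1 + \alpha$ is actually in $C$.
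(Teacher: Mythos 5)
Your proof is correct and follows essentially the same route as the paper, which simply observes that $\alpha \mapsto (\alpha,v)$ is a homomorphism and notes that the rest of the lemma follows; you have merely filled in the routine details (kernel, first isomorphism theorem, and the parity computation $(\1,v) = |v| \bmod 2$) that the paper leaves implicit.
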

\begin{proof}
It is easy to see that the map given in part 1 is a homomorphism and the remainder of the Lemma follows from this.
\end{proof}

\subsection{Axial algebras}

In this section, we will review the basic definitions related to axial algebras. For further details, see \cite{Axial1,Axial2}. Let $\mathbb{F}$ be a field not of characteristic two, $\mathcal{F} \subseteq \mathbb{F}$ a subset, and $\star : \mathcal{F} \times \mathcal{F} \to 2^{\mathcal{F}}$ a symmetric binary operation. We call the pair $(\mathcal{F}, \star)$ a \emph{fusion law over $\mathbb{F}$}. The fusion law is \emph{$G$-graded}, where $G$ is a finite abelian group, if there exist a partition $\{ \mathcal{F}_g\}_{g\in G}$ of $\mathcal{F}$ such that $a \star b \subseteq \mathcal{F}_{gh}$ for all $a \in \mathcal{F}_g$, $b \in \mathcal{F}_h$, $g,h \in G$. 

Let $A$ be a non-associative (i.e.\ not-necessarily-associative) commutative algebra over $\mathbb{F}$. For an element $a \in A$, the adjoint endomorphism $\ad_a$ is defined by $\ad_a(v):=av$, $\forall v \in A$. Let $\Spec(a)$ be the set of eigenvalues of $\ad_a$, and for $\lambda \in \Spec(a)$, let $A_\lambda(a)$ be the $\lambda$-eigenspace of $\ad_a$. Where the context is clear, we will write $A_\lambda$ for $A_\lambda(a)$.

\begin{definition}
Let $(\mathcal{F}, \star)$ be a fusion law over $\mathbb{F}$. An element $a \in A$ is an \emph{$\mathcal{F}$-axis} if the following hold:
\begin{enumerate}
\item $a$ is \emph{idempotent} (i.e.\ $a^2 = a$),
\item $a$ is \emph{semisimple} (i.e.\ the adjoint $\ad_a$ is diagonalisable),
\item $\Spec(a) \subseteq \mathcal{F}$ and $A_\lambda A_\mu \subseteq \bigoplus_{\gamma \in \lambda \star \mu } A_{\gamma}$, for all $\lambda, \mu \in \Spec(a)$. 
\end{enumerate}
We say that an $\mathcal{F}$-axis $a$ is \emph{primitive} if $A_1 = \langle a \rangle$.
\end{definition}

\begin{definition}
An \emph{axial algebra} is a pair $(A, X)$, where $A$ is a non-associative commutative algebra and $X$ is a set of $\mathcal{F}$-axes that generate $A$.  We say the axial algebra is \emph{primitive} if all the axes in $X$ are primitive.
\end{definition}

When the fusion law is clear from context we drop the $\mathcal{F}$ and simply use the term axis and axial algebra. We will also abuse notation and just write $A$ for an axial algebra $(A, X)$.

\begin{definition}\label{FrobeniusAxial}
Let $A$ be an $\mathcal{F}$-axial algebra.  A \emph{Frobenius form} is a non-zero bilinear form $( \cdot , \cdot ) : A \times A \to \mathbb{F}$ that associates. That is, for all $x,y,z \in A$,
\[
(x,yz)=(xy,z)
\]
\end{definition}

Sometimes in the literature it is also required that $(a,a) = 1$ for each $a \in X$, however we will not require this. In the context of VOAs, the value $\frac{1}{2} (a,a)$, where $a$ is an $\mathcal{F}$-axis, is called the \emph{central charge} of $A$.

In particular, a \emph{Majorana algebra} is an axial algebra over $\mathbb{R}$ with a positive definite Frobenius form, where $\mathcal{F}= \{ 0,1, \frac{1}{4},\frac{1}{32} \}$ with fusion law $\star$ given by \cite[Table 1]{IPSS10}. These kinds of algebra generalise subalgebras of the Griess algebra.


\section{Code algebras}\label{sec:codealg}

Inspired by code VOAs and Theorem \ref{motivatingthm}, we will now introduce our main definition which is the subject of this paper.

Let $C \subseteq \mathbb{F}_2^n$ be a binary linear code of length $n$ and $\mathbb{F}$ a field. Recall from Definition \ref{CodeAlgebra} that a collection of \emph{structure parameters} is a subset of $\mathbb{F}$
\[
\Lambda := \left\{ a_{i,\alpha}, b_{\alpha,\beta}, c_{i,\alpha} \in k  : i \in \supp(\alpha), \alpha, \beta \in C^* , \beta \neq \alpha, \alpha^c \right\}.
\]
The code algebra $A_C(\Lambda)$ is the commutative algebra over $\mathbb{F}$ with basis
\[
\{ t_i : i \in [n] \} \cup \{ e^{\alpha} : \alpha \in C^* \},
\]
and multiplication given by in Definition \ref{CodeAlgebra}.

The code algebra $A_C(\Lambda)$ has dimension $n + \vert C^* \vert$. Note that the set of structure parameters defined above gives some of the structure constants for the algebra, while the remaining structure constants are all zero. One particularly nice choice of structure parameters are $\Lambda = \{a_{i,\alpha} = a, b_{\alpha, \beta} = b, c_{i, \alpha} = c\}$ which we will write $(a,b,c)$. The basis elements $t_i$ and $e^\alpha$ are called \emph{toral} and \emph{codeword elements}, respectively. It will sometimes be convenient to abuse notation by writing $t_\alpha = \sum_{i \in \supp(\alpha)} t_i$ where $\alpha \in C^*$.

\begin{remark}
Since the algebra is commutative by definition, we must have $b_{\alpha, \beta} = b_{\beta, \alpha}$ for all $\alpha, \beta \in C^*$. However, $A_C(\Lambda)$ is non-associative in general.  In fact, if $A_C(\Lambda)$ is associative, then all the $a$ and $c$ structure parameters must be zero.
\end{remark}

As the following example shows, the algebra $A_C(\Lambda)$ is not in general even power associative.

\begin{example}
Suppose $A_C(a,b,c)$ is a code algebra and $x = e^\alpha$ for some $\alpha \in C^*$.  Then $x^2 = c t_\alpha$
\begin{align*}
(x^2)^2 &= c t_\alpha \\
x(x \cdot x^2) &= a c^2 |\alpha| t_\alpha
\end{align*}
which is not equal in general and hence the algebra is not power associative.
\end{example}

%
%

We want to impose a non-degeneracy condition on code algebras.

\begin{definition}
A code algebra $A_C(\Lambda)$ is \emph{non-degenerate} if $\supp(C) = [n]$, $|C^*| > 0$ and none of the structure parameters in $\Lambda$ are zero.
\end{definition}

\begin{example}[Code VOA example]
The algebra described in Theorem \ref{motivatingthm} is a non-degenerate code VOA with structure parameters $(a, b, c) = (\frac{1}{4}, \lambda, 4 \lambda^2)$. We will call such a choice of structure parameters \emph{code VOA structure parameters}.
\end{example}

Some algebras with specific choices of structure parameters will be more interesting than others.  In particular, we will be interested in algebras with a large automorphism group and this will impose restrictions on the structure parameters.  One obvious place the automorphisms may come from is from the code itself. We first need a definition.

\begin{definition}\label{regular}
Let $G \leq \Aut(C)$. The structure parameters $\Lambda$ are called \emph{$G$-regular} if for all $g \in G$
\begin{enumerate}
\item[$1.$] $a_{i,\alpha} = a_{ig, \alpha g}$ for all $i \in \supp(\alpha)$, $\alpha \in C^*$
\item[$2.$] $b_{\alpha, \beta} = b_{\alpha g, \beta g}$ for all, $\alpha, \beta \in C^*$, $\beta \neq \alpha, \alpha^c$
\item[$3.$] $c_{i, \alpha} = c_{i g, \alpha g}$ for all $i \in \supp(\alpha)$, $\alpha \in C^*$
\end{enumerate}
They are \emph{regular} if they are $\Aut(C)$-regular.
\end{definition}

Note that $\Lambda$ being $G$-regular just means that $b_{\alpha, \beta}$ is constant on $G$-orbits of $C \times C$ and $a_{i, \alpha}$ and $c_{i, \alpha}$ are both constant on $G$-orbits of $\mathbb{F}_2^n \times C$.

Let $G \leq  \Aut(C)$ and consider the mapping $\phi : G \to \Aut(A_C)$, where for each $g \in G$, $\phi(g)$ is the linear extension to $A_C$ of
\begin{align*}
 t_i^g & := t_{i g^{-1}}\\
(e^\alpha)^g &:= e^{\alpha g^{-1}}
\end{align*}

\begin{lemma}\label{inducedaction}
Let $G \leq \Aut(C)$.  The above mapping $\phi : G \to \Aut(A_C)$ is a well-defined group homomorphism \textup{(}i.e. an algebra representation of $G$ on $A_C$\textup{)} if and only if the structure parameters are $G$-regular. Moreover, when this representation is defined it is faithful.
\end{lemma}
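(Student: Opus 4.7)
The plan is to reduce the lemma to a case-by-case verification of multiplicativity of $\phi(g)$ on basis pairs; the three families of scalar equalities that arise are precisely the three $G$-regularity conditions, after which the homomorphism property and faithfulness are quick finishing moves.

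Since $g \in \Aut(C)$ permutes $[n]$ and permutes $C^*$ under the induced action, the linear map $\phi(g)$ permutes the basis $\{t_i\} \cup \{e^\alpha\}$ of $A_C$, so is automatically a bijective linear endomorphism; the substantive question is whether it preserves the product. I would check each of the four cases of the defining multiplication. The rule $t_i \cdot t_j = \delta_{i,j}t_i$ is preserved by any permutation of toral indices. For $t_i \cdot e^\alpha$ the equality $\phi(g)(t_i \cdot e^\alpha) = \phi(g)(t_i)\cdot\phi(g)(e^\alpha)$, together with the observation that $\alpha_i = 1$ iff $(\alpha g^{-1})_{ig^{-1}} = 1$, reduces to the scalar equation $a_{i,\alpha} = a_{ig^{-1},\alpha g^{-1}}$, which is condition~1 of Definition~\ref{regular}. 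For $e^\alpha\cdot e^\beta$ with $\alpha \neq \beta, \beta^c$ the check collapses to $b_{\alpha,\beta} = b_{\alpha g^{-1},\beta g^{-1}}$, condition~2. For $e^\alpha\cdot e^\alpha$ I would expand $\phi(g)\bigl(\sum_{i \in \supp(\alpha)} c_{i,\alpha}t_i\bigr)$, reindex by $j = ig^{-1}$, and compare with $\sum_{j \in \supp(\alpha g^{-1})} c_{j, \alpha g^{-1}}t_j$ to obtain $c_{i,\alpha} = c_{ig^{-1},\alpha g^{-1}}$, condition~3. The case $e^\alpha\cdot e^{\alpha^c}=0$ is automatic since $g$ fixes $\1$ and so permutes complementary pairs. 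As $g^{-1}$ ranges over $G$ with $g$, these three families of identities hold for every $g \in G$ exactly when $\Lambda$ is $G$-regular.

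Granted $G$-regularity, the homomorphism identity $\phi(gh) = \phi(g)\phi(h)$ is a routine check on basis elements: the inverse in the definitions $t_i^g := t_{ig^{-1}}$ and $(e^\alpha)^g := e^{\alpha g^{-1}}$ is exactly what is needed to turn the natural action of $\Aut(C)$ on indices and codewords into a group homomorphism into $\Aut(A_C)$. Faithfulness is then immediate from the toral basis alone: if $\phi(g) = \id$ then $t_{ig^{-1}} = t_i$ for every $i \in [n]$, which forces $g = 1$. The only real obstacle I anticipate is careful bookkeeping—keeping straight the left versus right conventions so that $\phi$ comes out as a homomorphism rather than an antihomomorphism, and correctly reindexing the sum in the $c$-calculation—but beyond this there is no substantive difficulty.
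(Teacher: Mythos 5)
Your proposal is correct and follows essentially the same route as the paper, which merely sketches the verification ("checking this, we find that the above are necessary and sufficient conditions") that you carry out case by case; your observation that the conditions appear with $g^{-1}$ in place of $g$ but are equivalent as $g$ ranges over the group is the only bookkeeping point, and you handle it. The faithfulness argument via the action on the toral basis matches the paper's appeal to the faithful action of $\Aut(C)$ on $\mathbb{F}^n$.
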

\begin{proof}
Since $G$ has a well-defined action on the codewords of $C$, and so $C^*$, and also on $\mathbb{F}^n$, it is clear that $\phi$ is a group homomorphism. It remains to check whether, for all $g \in G$, $\phi(g)$ respects multiplication in the algebra.  Checking this, we find that the above are necessary and sufficient conditions.  When the action is defined it is clear that it is faithful as $\Aut(C)$ acts faithfully on $\mathbb{F}^n$.
\end{proof}

We note that the code VOA example satisfies these conditions.

\subsection{Subalgebras and simplicity}

A subalgebra of $A_C(\Lambda)$ which has a basis of idempotents that pairwise multiply to $0$ is called a \emph{torus}; clearly, this is always associative. The subalgebra $\langle t_i : i \in [n] \rangle$ is an example of such a torus; we call it the \emph{standard torus}.  It is easy to see that it is maximal. Indeed, suppose there exists a non-zero idempotent $x \in A_C$ which could be added to the standard torus.  If it is supported on any $t_i$, or any $e^\alpha$ where $\alpha_i =1$, then $t_i x \neq 0$, a contradiction.  However, this includes all basis elements, so $x =0$ and the standard torus is maximal. In the context of Majorana algebras, maximal tori have been studied and classified for low-dimensional cases \cite{CR15}.

It is easy to see that there are some other subalgebras of $A_C$ which are induced from subcodes of $C$.

\begin{lemma}\label{subalgebra}
Let $A_C$ be an arbitrary code algebra and $D$ a subcode of $C$.  Then $D$ defines a subalgebra
\[
\langle t_i, e^\alpha : \alpha \in D, i \in \supp( D) \rangle.
\]
\end{lemma}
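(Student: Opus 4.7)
The plan is to check directly that the linear span
\[
S := \langle t_i, e^\alpha : i \in \supp(D), \alpha \in D^* \rangle
\]
is closed under the multiplication of $A_C$. Since multiplication is bilinear, it suffices to check all products of pairs of basis elements of $S$, of which there are three types: $t_i \cdot t_j$, $t_i \cdot e^\alpha$, and $e^\alpha \cdot e^\beta$.

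The first two types are immediate from the definition: $t_i \cdot t_j = \delta_{ij}\, t_i$ stays in $S$, and $t_i \cdot e^\alpha$ is either $a_{i,\alpha} e^\alpha$ or $0$, both of which lie in $S$. The only slightly non-trivial case is $e^\alpha \cdot e^\beta$ for $\alpha, \beta \in D^*$, and this is where I would use that $D$ is a \emph{linear} subcode. If $\alpha = \beta$, then $e^\alpha \cdot e^\alpha = \sum_{i \in \supp(\alpha)} c_{i,\alpha}\, t_i$, and since $\supp(\alpha) \subseteq \supp(D)$ each $t_i$ appearing lies in $S$. If $\alpha = \beta^c$, the product is $0$. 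Otherwise, $e^\alpha \cdot e^\beta = b_{\alpha,\beta}\, e^{\alpha+\beta}$, and linearity of $D$ gives $\alpha + \beta \in D$; moreover $\alpha + \beta \neq \0$ because $\alpha \neq \beta$, and $\alpha + \beta \neq \1$ because $\alpha \neq \beta^c$, so $\alpha + \beta \in D^*$ and the product lies in $S$.

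There is essentially no obstacle here; the lemma is a bookkeeping statement, with the only substantive observation being that linearity of $D$ (together with the assumption $\alpha \neq \beta, \beta^c$) guarantees $\alpha + \beta \in D^*$ whenever the $b$-type product is nonzero.
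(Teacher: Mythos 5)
Your proof is correct and follows essentially the same route as the paper: check closure of the span of the generators on basis elements, using linearity of $D$ for products of distinct codeword elements and the inclusion of all toral support for squares. Your extra remark that $\alpha+\beta \neq \0, \1$ when $\alpha \neq \beta, \beta^c$ is a fine (if implicit in the paper) piece of bookkeeping.
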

\begin{proof}
We just need to check that the multiplication of the generators of the subalgebra is closed.  Any toral element multiplied by a codeword element is either zero or is a multiple of the same codeword element.  Since $D$ is a subcode, multiplication of two distinct codeword elements is closed.  Finally, since we include all the toral support for each codeword, multiplication of a codeword element by itself is also closed.
\end{proof}

Recall that an algebra $A$ is simple if it has no non-trivial proper ideals.

\begin{theorem}\label{simple}
A non-degenerate code algebra $A_C$ is simple unless $C = \{ \bf 0,1,\alpha, \alpha^c \}$.
\end{theorem}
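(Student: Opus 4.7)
The plan is to show that any non-zero ideal $I$ of a non-degenerate code algebra $A_C$ (with $C$ not of the excluded form) must contain some toral idempotent $t_i$, and then to spread $t_i$ throughout $A_C$ using the code structure. I will repeatedly use that non-degeneracy forces $\supp(C^*) = [n]$: if some coordinate $i$ were covered only by $\1 \in C$, then $\{\alpha \in C : \alpha_i = 0\}$ would be a subspace of $C$ missing only $\1$, forcing $|C| = 2$ and contradicting $|C^*| > 0$.

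Pick $0 \neq x \in I$ minimising the number of non-zero basis coefficients, writing $x = \sum_{i \in T} \lambda_i t_i + \sum_{\alpha \in E} \mu_\alpha e^\alpha$ with $T \subseteq [n]$, $E \subseteq C^*$, and each displayed coefficient non-zero. The main tool is that $y \cdot x \in I$ for every $y \in A_C$, so by minimality either $y \cdot x = 0$ or $|\supp(y \cdot x)| \geq |T| + |E|$. First suppose $T \neq \emptyset$ and pick $i \in T$. Since $t_i \cdot x$ is supported on $\{t_i\} \cup \{e^\alpha : \alpha \in E,\, \alpha_i = 1\}$, this forces $T = \{i\}$ and $\alpha_i = 1$ for every $\alpha \in E$. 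For $j \neq i$, the support of $t_j \cdot x$ sits in a proper subset of $E$, so it must vanish; since all $a_{j,\alpha}$ are non-zero, this gives $\alpha_j = 0$ for every $\alpha \in E$, so $\supp(\alpha) = \{i\}$ and $E \subseteq \{e_i\}$. A parallel argument when $T = \emptyset$ shows all $\alpha \in E$ share a common support, forcing $|E| = 1$.

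In the case $T = \emptyset$, $E = \{\alpha\}$, I compute $x^2 = \mu_\alpha^2 \sum_{j \in \supp(\alpha)} c_{j,\alpha}\, t_j$, and then $t_j \cdot x^2 = \mu_\alpha^2 c_{j,\alpha}\, t_j$ puts $t_j \in I$. The case $T = \{i\}$, $E = \emptyset$ gives $x = \lambda_i t_i$ directly. The case $T = \{i\}$, $E = \{e_i\}$ with $a_{i,e_i} \neq 1$ yields $t_i \cdot x - a_{i,e_i}\, x = \lambda_i(1 - a_{i,e_i})\, t_i \in I$, a non-zero multiple of $t_i$. The main obstacle is the remaining subcase $a_{i,e_i} = 1$, in which $x$ is already a $1$-eigenvector of $\ad_{t_i}$ and the previous trick fails. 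Here I would exploit that $C \neq \{\0, \1, \alpha, \alpha^c\}$, which together with non-degeneracy forces $|C^*| \geq 3$, to produce $\gamma \in C^*$ with $\gamma \neq e_i,\, e_i^c$ and $\gamma_i = 1$. Then $x \cdot e^\gamma = \lambda_i a_{i,\gamma}\, e^\gamma + \mu_i b_{e_i,\gamma}\, e^{e_i + \gamma}$ lies in $I$, and multiplying once more by $t_i$ annihilates the second summand (since $(e_i + \gamma)_i = 0$), leaving the non-zero element $\lambda_i a_{i,\gamma}^2\, e^\gamma$, a multiple of a single basis vector that contradicts the minimality of $x$.

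Once $t_i \in I$ for some $i$, the propagation is easy: for any $\alpha \in C^*$ with $\alpha_i = 1$, $a_{i,\alpha}^{-1}\, t_i \cdot e^\alpha = e^\alpha$ lies in $I$, and then $(e^\alpha)^2 = \sum_{j \in \supp(\alpha)} c_{j,\alpha}\, t_j \in I$, followed by multiplication by each $t_k$, forces $t_k \in I$ for every $k \in \supp(\alpha)$. Setting $J := \{j : t_j \in I\}$, the preceding step shows that $J$ is closed under the rule ``if $\alpha \in C^*$ meets $J$, then $\supp(\alpha) \subseteq J$''. If $J \neq [n]$, set $J' = [n] \setminus J$; using $\supp(C^*) = [n]$, pick non-zero codewords $\alpha, \beta \in C^*$ supported respectively in $J$ and $J'$. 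Then $\alpha + \beta \in C$ has support meeting both parts, so by the closure rule it cannot lie in $C^*$, forcing $\alpha + \beta = \1$. This pins down $\supp(\alpha) = J$ and $\supp(\beta) = J'$; applying the same argument to any other codeword supported in $J$ or $J'$ shows $C^* = \{\alpha, \beta\}$, so $C = \{\0, \1, \alpha, \alpha^c\}$ — the excluded case. Hence $J = [n]$, every $e^\alpha$ lies in $I$, and $I = A_C$.
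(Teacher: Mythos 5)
Your proof is correct, but it takes a genuinely different route from the paper's. The paper starts from an arbitrary non-zero element of the ideal, isolates a single codeword element $e^\alpha$ by choosing a codeword coefficient of maximal support and multiplying through by the toral elements in $\supp(\alpha)$, then spreads to \emph{all} codeword elements via the products $e^{\alpha+\beta}e^{\alpha} = b_{\alpha+\beta,\beta}e^{\beta}$ (this is exactly where $C \neq \{\0,\1,\alpha,\alpha^c\}$ is used, to reach $e^{\alpha^c}$ through a third codeword), and finally recovers the toral elements from $(e^{\beta})^2$. You instead take an element of \emph{minimal} support, show minimality collapses it to essentially one basis vector, and then propagate toral idempotents through supports of codewords, packaging the failure of propagation as a partition $[n] = J \sqcup J'$ that forces $C = \{\0,\1,\alpha,\alpha^c\}$. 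Your route buys two things: it never uses that the $b_{\alpha,\beta}$ are non-zero, so it proves simplicity under a weaker non-degeneracy hypothesis; and your explicit handling of the configuration $x = \lambda_i t_i + \mu_i e^{e_i}$ with $a_{i,e_i}=1$ (where $e_i$ is the weight-one codeword supported at $\{i\}$) covers a weight-one corner case that the paper's displayed identity $(t_{j_1}(\cdots(t_{j_k}x))\cdots) = a_{j_1,\alpha}\cdots a_{j_k,\alpha}\lambda_\alpha e^\alpha$ passes over silently, since for $|\alpha|=1$ a toral term can survive. Two small points to tighten, neither a gap in substance: the existence of $\gamma \in C^*$ with $\gamma_i = 1$ and $\gamma \neq e_i, e_i^c$ does not follow from $|C^*|\geq 3$ alone; argue instead that if $C_1(e_i) \subseteq \{e_i,\1\}$ then, since $|C_1(e_i)| = |C|/2$ by Lemma \ref{innerC}, either $|C|=2$ (contradicting non-degeneracy) or $C=\{\0,\1,e_i,e_i^c\}$, the excluded code. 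Likewise, when you pick $\beta \in C^*$ supported in $J'$, you are tacitly invoking the contrapositive of your closure rule (a codeword meeting $J$ lies inside $J$), which deserves a sentence.
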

\begin{proof}
Let $I$ be a non-trivial ideal of $A$.  Let $x \in I$ such that $x \neq 0$. We write $x = \sum_{i \in [n]} \lambda_i t_i + \sum_{\alpha \in C^*} \lambda_\alpha e^\alpha$. There are two cases.

Suppose $\lambda_\alpha \neq 0$ for some $\alpha \in C^*$.  We may choose $\alpha$ so that there does not exist $\beta \in C^*$ with $\supp(\beta) \supsetneqq \supp(\alpha)$ and $\lambda_\beta \neq 0$.  Then,
\[
( t_{j_1} ( \dots (t_{j_k} x )) \dots ) =  a_{j_1,\alpha} \dots a_{j_k,\alpha} \lambda_\alpha e^\alpha \in I
\]
where $\supp(\alpha) = \{ j_1, \dots, j_k \}$.  Hence $e^\alpha \in I$.  Observe that, for any $\beta \in C - \{ { \bf 0,1,} \alpha, \alpha^c \}$, we have $e^\beta \in I$ because $e^{\alpha + \beta} e^\alpha = b_{\alpha + \beta, \beta} e^\beta \in I$. Furthermore, as $\alpha^c = \beta + (\beta + \alpha^c)$, where $\beta, \beta + \alpha^c \in C - \{ 0,1,\alpha, \alpha^c \}$, we also have $e^{\alpha^c} = \frac{1}{b_{\beta + \alpha^c,\beta}} e^{\beta+\alpha^c} e^{\beta} \in I$.  So, $e^\beta \in I$ for all $\beta \in C^*$. Observe that $(e^\beta)^2 = \sum_{i \in \supp(\beta)} c_{i, \beta} t_i$ for any $\beta \in  C^*$. Hence, for all $j \in [n]$ we may find $\beta \in C^*$ such that $j \in \supp(\beta)$, so $t_j = \frac{1}{c_{i, \beta}} t_j (e^\beta)^2 \in I$. Therefore, $I = A$.

Now suppose $\lambda_\alpha = 0$ for all $\alpha \in C^*$, then $\lambda_i \neq 0$ for some $i \in [n]$. It is easy to see that $t_i = \frac{1}{\lambda_i} t_i x \in I$. Now, $e^{\beta} = \frac{1}{a_{i, \alpha}} t_i e^\beta \in I$ for any $\beta \in C^*$ with $i \in \supp(\beta)$. Therefore, $x' = e^{\beta}$ is as in the case above and we conclude that $I = A$.
\end{proof}

We now see that the condition on the above proposition was indeed necessary.

\begin{lemma}\label{non-simpleA}
If $C = \{ {\bf 0,1,}\alpha, \alpha^c \}$, then a non-degenerate code algebra $A_C$ has exactly two non-trivial proper ideals
\[
\langle \sum_{i \in \supp(\alpha)} t_i , e^\alpha \rangle, \qquad \langle \sum_{i \in \supp(\alpha^c)} t_i , e^{\alpha^c} \rangle.
\]
\end{lemma}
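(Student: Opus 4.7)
The plan is to exploit the fact that when $C = \{\mathbf{0}, \mathbf{1}, \alpha, \alpha^c\}$ the support of $C^*$ splits cleanly in two, making $A_C$ a direct sum $I_1 \oplus I_2$ of two ideals which are individually simple; the classification then drops out of this decomposition. Setting $T_\alpha := \sum_{i \in \supp(\alpha)} t_i$ and $T_{\alpha^c} := \sum_{j \in \supp(\alpha^c)} t_j$, let $I_1$ and $I_2$ denote the two ideals appearing in the statement. First I would observe that $I_1 = \mathrm{span}(\{t_i : i \in \supp(\alpha)\} \cup \{e^\alpha\})$: the inclusion $\supseteq$ holds because $t_j \cdot T_\alpha = t_j$ for $j \in \supp(\alpha)$, while $\subseteq$ is a short check from the multiplication table. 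Because $\supp(\alpha) \sqcup \supp(\alpha^c) = [n]$ by non-degeneracy, the standard basis of $A_C$ partitions into bases for $I_1$ and $I_2$, giving $A_C = I_1 \oplus I_2$ as vector spaces. Crucially, using $e^\alpha \cdot e^{\alpha^c} = 0$ and $t_j \cdot e^{\alpha^c} = 0$ for $j \in \supp(\alpha)$, one checks that $I_1 \cdot I_2 = 0$, which confirms that each $I_k$ is an ideal and that $A_C$ is their direct sum as ideals.

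Next I would show that each $I_k$ is \emph{simple}, meaning the only $A_C$-ideals contained in $I_k$ are $0$ and $I_k$ itself. Take a non-zero $A_C$-ideal $J \subseteq I_1$ and $0 \neq x = \sum_{i \in \supp(\alpha)}\lambda_i t_i + \lambda_\alpha e^\alpha \in J$. In the generic case $|\supp(\alpha)| \geq 2$ and $\lambda_\alpha \neq 0$, applying any two distinct $t_{j_1}, t_{j_2}$ with $j_1, j_2 \in \supp(\alpha)$ annihilates all toral components and leaves a non-zero multiple of $e^\alpha$, so $e^\alpha \in J$; then $t_i \cdot (e^\alpha)^2 = c_{i,\alpha} t_i$ deposits every $t_i$ for $i \in \supp(\alpha)$ into $J$, so $J = I_1$. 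If instead $\lambda_\alpha = 0$, then $t_i \cdot x = \lambda_i t_i \in J$ places some $t_i$ into $J$, and multiplying by $e^\alpha$ reduces to the previous case. The edge case $|\supp(\alpha)| = 1$ collapses to a small two-dimensional computation in $I_1 = \mathrm{span}(t_j, e^\alpha)$ using $t_j x$ and $e^\alpha x$ directly. The argument for $I_2$ is symmetric.

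Finally, for an arbitrary non-trivial proper ideal $I$, I would look at $I \cap I_1$ and $I \cap I_2$, each of which is an $A_C$-ideal contained in the respective $I_k$, hence $0$ or $I_k$ by simplicity. Writing $x = x_1 + x_2 \in I$ with $x_k \in I_k$, the orthogonality $I_1 \cdot I_2 = 0$ yields $t_j \cdot x = t_j \cdot x_1 \in I \cap I_1$ for every $j \in \supp(\alpha)$; if $I \cap I_1 = 0$, then expanding $x_1$ in the basis and using $a_{j,\alpha} \neq 0$ forces $x_1 = 0$, and symmetrically $x_2 = 0$ if $I \cap I_2 = 0$. Hence at least one of the two intersections equals $I_k$, and if both do then $I \supseteq I_1 \oplus I_2 = A_C$, contradicting properness. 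So $I = I_1$ or $I = I_2$. The main obstacle is the simplicity step: making sure the iterated $t_i$- and $e^\alpha$-multiplications really do extract each basis element of $I_k$ from an arbitrary non-zero element of $J$, particularly when small-support degeneracies reduce the number of independent moves available inside $I_k$.
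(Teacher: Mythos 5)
Your overall strategy is sound and, as far as one can tell, is exactly the ``easy calculation'' the paper leaves out: identify the two ideals with the spans of $\{t_i : i \in \supp(\alpha)\}\cup\{e^\alpha\}$ and $\{t_i : i \in \supp(\alpha^c)\}\cup\{e^{\alpha^c}\}$, check $I_1\cdot I_2=0$ and $A_C=I_1\oplus I_2$, show each summand contains no proper non-zero $A_C$-ideal, and classify an arbitrary ideal via its intersections with the summands. For $|\supp(\alpha)|,|\supp(\alpha^c)|\geq 2$ your extraction argument (two distinct toral multiplications to isolate $e^\alpha$, then $(e^\alpha)^2$ and further toral multiplications to recover each $t_i$) is complete and correct, and the final paragraph's reduction is fine.

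The genuine gap is the deferred edge case $|\supp(\alpha)|=1$ (or symmetrically $|\supp(\alpha^c)|=1$), which you wave off as ``a small two-dimensional computation using $t_jx$ and $e^\alpha x$ directly.'' That computation does not go through in general. Write $\supp(\alpha)=\{j\}$, $a:=a_{j,\alpha}$, $c:=c_{j,\alpha}$. If $a\neq 1$ all is well, since $x-t_jx=\mu(1-a)e^\alpha$ isolates $e^\alpha$. But non-degeneracy allows $a=1$, and then $t_jx=x$ gives nothing new, while $e^\alpha x=\lambda e^\alpha+\mu c\, t_j$ is proportional to $x=\lambda t_j+\mu e^\alpha$ precisely when $\lambda^2=c\mu^2$. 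If $c$ is a square in $\mathbb{F}$ (e.g.\ $c>0$ over $\mathbb{R}$), the vectors $\sqrt{c}\,t_j\pm e^\alpha$ each span a one-dimensional ideal of $A_C$: they are annihilated by every $t_i$ with $i\neq j$ and by $e^{\alpha^c}$, and are eigenvectors for multiplication by $t_j$, by $e^\alpha$ and by themselves. So in that situation $I_1$ is \emph{not} simple, your intermediate claim fails, and indeed the count ``exactly two non-trivial proper ideals'' is itself violated (take $C=\mathbb{F}_2^2$ with $a=1$, $c=1$). To make the argument complete you must either handle the weight-one case under the extra hypothesis $a_{j,\alpha}\neq 1$ (and likewise for $\alpha^c$), or explicitly flag this exceptional configuration --- which is the same degenerate case $C=\{\0,\1,\alpha,\alpha^c\}$, $|\alpha|=1$, $a_{\supp(\alpha),\alpha}=1$ that the paper singles out in Theorem \ref{frob} but does not exclude in this lemma.
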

\begin{proof}
This is an easy calculation.
\end{proof}

\subsection{Idempotents}

Code algebras are constructed in such a way that there are some obvious idempotents.  Throughout this section we assume that $A= A_C(\lambda)$ is non-degenerate.

Recall from Section \ref{sec:background} the adjoint transformation and the notation
\[
A_\mu(v) := \{ w \in A : \ad_v (w) = \mu w \}.
\]
Where it is clear from the context, we will write $A_\mu$ for $A_\mu(v)$. 

\begin{lemma}\label{ideigenspaces}
For any $i \in [n]$, $\ad_{t_i}$ is semisimple with eigenvalues $1$, $0$ and the set $\{ a_{i, \alpha} : i \in \supp(\alpha), \alpha \in C ^* \}$ and if $a_{i,\alpha} \neq 1$ for all $\alpha \in C^*$ with $\alpha_i =1$ then
\[
A = A_1 \oplus A_0 \oplus \bigoplus A_{a_{i, \alpha}},
\]
where $A_1 = \langle t_i \rangle$, $A_0 = \langle t_j, e^\alpha : j \in [n], j \neq i, \alpha_i = 0 \rangle$ and $A_{a_{i, \alpha}} = \langle e^\beta : \beta_i =1, a_{j, \beta}=a_{i, \alpha} \rangle$.  Moreover, $A_0$ has dimension $\frac{|C|}{2} + n-2$.
\end{lemma}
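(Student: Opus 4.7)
The plan is to observe that the defining basis already diagonalises $\ad_{t_i}$, so almost nothing needs to be done beyond reading off eigenvalues from the multiplication rules in Definition \ref{CodeAlgebra} and then counting basis vectors in each eigenspace. Concretely, $t_i \cdot t_i = t_i$, $t_i \cdot t_j = 0$ for $j \neq i$, $t_i \cdot e^\alpha = a_{i,\alpha} e^\alpha$ when $\alpha_i = 1$, and $t_i \cdot e^\alpha = 0$ when $\alpha_i = 0$. Thus each basis vector is an eigenvector of $\ad_{t_i}$, so $\ad_{t_i}$ is semisimple, and the spectrum is exactly $\{1, 0\} \cup \{a_{i,\alpha} : \alpha \in C^*, \alpha_i = 1\}$ (omitting $0$ from this second set, which is why non-degeneracy enters below).

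Assuming $a_{i,\alpha} \neq 1$ for every $\alpha \in C^*$ with $\alpha_i = 1$, the only basis vector giving eigenvalue $1$ is $t_i$ itself, so $A_1 = \langle t_i \rangle$. By non-degeneracy, all $a_{i,\alpha}$ are non-zero, so no $e^\alpha$ with $\alpha_i = 1$ contributes to $A_0$; hence $A_0$ is spanned exactly by those $t_j$ with $j \neq i$ together with those $e^\alpha$ with $\alpha_i = 0$. The remaining basis vectors $e^\beta$ with $\beta_i = 1$ partition by the value of $a_{i,\beta}$ into eigenspaces $A_{a_{i,\alpha}} = \langle e^\beta : \beta_i = 1, \, a_{i,\beta} = a_{i,\alpha}\rangle$, giving the stated direct sum decomposition of $A$.

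For the dimension claim, I would apply Lemma \ref{innerC} with $v = \delta_i$, so that $C_0(\delta_i) = \{\alpha \in C : \alpha_i = 0\}$. Non-degeneracy gives $i \in \supp(C)$, so $C_1(\delta_i)$ is non-empty and hence $|C_0(\delta_i)| = |C|/2$. Since $\1_i = 1$, the all-ones codeword lies in $C_1(\delta_i)$, not $C_0(\delta_i)$, so the contribution to $C^*$ from $C_0(\delta_i)$ is only $|C|/2 - 1$ (removing $\0$). Adding the $n-1$ toral basis vectors $t_j$ with $j \neq i$ gives
\[
\dim A_0 = (n-1) + \left(\tfrac{|C|}{2} - 1\right) = \tfrac{|C|}{2} + n - 2,
\]
as required.

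There is no real obstacle here: the whole argument is a direct consequence of the definition together with the counting lemma in \ref{innerC}. The only place one has to be careful is the dimension count, where one must remember that $\1 \notin C_0(\delta_i)$, so only $\0$ (and not also $\1$) is removed from $C_0(\delta_i)$ when passing to $C^*$, and that non-degeneracy is precisely what guarantees both $|C_0| = |C|/2$ (via $C_1 \neq \emptyset$) and that no further $e^\alpha$ with $\alpha_i = 1$ sneaks into $A_0$ through some $a_{i,\alpha} = 0$.
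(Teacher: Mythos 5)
Your proposal is correct and follows the same route as the paper: the defining basis diagonalises $\ad_{t_i}$, so the eigenspace decomposition is read off directly from the multiplication rules, and the dimension of $A_0$ comes from Lemma \ref{innerC} applied with $v$ the $i$-th standard basis vector, giving $\frac{|C|}{2}-1$ codewords $\alpha \in C^*$ with $\alpha_i = 0$. Your explicit handling of non-degeneracy (ensuring $C_1 \neq \emptyset$ and that no $a_{i,\alpha}=0$ pushes an $e^\alpha$ into $A_0$) and of the fact that $\1 \notin C_0$ just fills in details the paper leaves implicit.
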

\begin{proof}
The subspaces are clear from the definition of code algebras.  By Lemma \ref{innerC}, we see that the number of $\alpha \in C^*$ such that $\alpha_i=0$ is $\frac{|C|}{2}-1$.
\end{proof}

We note that the $1$-eigenspace is spanned by $t_i$.  That is, $t_i$ is a primitive idempotent. We can now give the fusion law for $t_i$.  In the table, rows and columns correspond to eigenspaces and the entries are the sum of the eigenspaces in which the product of two elements in the corresponding eigenspaces lies.  We label the eigenspaces by their eigenvalues and adopt the convention that an empty entry represents a zero product.

\begin{proposition}\label{fusiontable}
Suppose that the eigenvalues of $t_i$ are $\{1, 0, a_1, \dots, a_k\}$ and that if $a_{i,\alpha} \neq 1$ for all $\alpha \in C^*$ with $\alpha_i =1$.  Then the fusion law with respect to $t_i$ is given by Table $\ref{Fusion law}$.
\begin{table}[!htb]
\setlength{\tabcolsep}{4pt}
\renewcommand{\arraystretch}{1.5}
\centering
\begin{tabular}{c||c|c|c|c|c}
 & $1$ & $0$ & $a_1$ & $\dots$ & $a_k$ \\ \hline \hline
$1$ & $1$ &  & $a_1$ & $\dots$ & $a_k$ \\ \hline
$0$ &  & $0$ & $a_1, \dots ,a_k$ & $\dots$ & $a_1, \dots ,a_k$ \\ \hline
$a_1$ & $a_1$ & $a_1, \dots ,a_k$ & $1, 0$ & $0$ & $0$ \\ \hline
$\vdots$ & $\vdots$ & $\vdots$ & $0$ & $\ddots$ & $0$ \\ \hline
$a_k$ & $a_k$  & $a_1, \dots, a_k$ & $0$ & $0$ & $1, 0$ 
\end{tabular}
\caption{Fusion law with respect to $t_i$.} \label{Fusion law}
\end{table}
\end{proposition}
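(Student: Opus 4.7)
The proof is essentially a direct verification: Lemma~\ref{ideigenspaces} already provides an explicit basis for each eigenspace $A_1$, $A_0$, $A_{a_1}, \dots, A_{a_k}$ of $\ad_{t_i}$, so what remains is to take representative basis vectors from a pair of eigenspaces, multiply them out using the definition of the code algebra product, and check that the result lies in the claimed sum of eigenspaces. The plan is to organise this as a $3\times 3$ block: the $\{1\}\times\{\cdot\}$ row, the $\{0\}\times\{\cdot\}$ row, and the $\{a_j\}\times\{a_\ell\}$ block.

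The key bookkeeping device is the value of the $i$-th coordinate, which dictates where $e^\gamma$ sits. The convention from Lemma~\ref{ideigenspaces} is that $e^\gamma \in A_0$ whenever $\gamma_i = 0$, while $e^\gamma \in A_{a_{i,\gamma}}$ whenever $\gamma_i = 1$; similarly $t_j \in A_0$ for $j \neq i$ and $t_i \in A_1$. Because addition in $\mathbb{F}_2$ gives $(\alpha+\beta)_i = \alpha_i + \beta_i \pmod 2$, one can simply read off the $i$-th coordinate of each product to locate the summands $e^{\alpha+\beta}$.

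The row for $A_1$ is immediate from the definition of eigenvector: $t_i \cdot v = \lambda v$ for any $v \in A_\lambda$, so the product stays inside $A_\lambda$. For the $A_0 \cdot A_0$ entry, multiply pairs chosen from $\{t_j : j\neq i\} \cup \{e^\alpha : \alpha_i = 0\}$; every case yields either $0$, a toral element $t_j$ with $j\neq i$, a scalar multiple of some $e^\alpha$ with $\alpha_i = 0$, or (for the square $(e^\alpha)^2$) a sum $\sum_{j\in\supp(\alpha)} c_{j,\alpha}t_j$ in which $i \notin \supp(\alpha)$, so all summands again lie in $A_0$. For the $A_0 \cdot A_{a_\ell}$ entry, the products $t_j \cdot e^\beta$ with $\beta_i=1$ are scalar multiples of $e^\beta$ and so stay in $A_{a_\ell}$, but the products $e^\alpha \cdot e^\beta$ with $\alpha_i=0$, $\beta_i=1$ give $b_{\alpha,\beta}e^{\alpha+\beta}$ with $(\alpha+\beta)_i = 1$, and since $a_{i,\alpha+\beta}$ can equal any of the $a_j$, this product can land in any one of $A_{a_1},\dots,A_{a_k}$, which is exactly why the table entry is the full list $a_1,\dots,a_k$.

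The $A_{a_j}\cdot A_{a_\ell}$ block splits into two subcases, both controlled by $i$-parity. For distinct $j\neq \ell$, any $e^\alpha, e^\beta$ in the respective eigenspaces satisfy $\alpha_i=\beta_i=1$ but $\alpha\neq\beta$ and $\alpha\neq\beta^c$ (since $\alpha_i\neq(\beta^c)_i$), giving $b_{\alpha,\beta}e^{\alpha+\beta}$ with $(\alpha+\beta)_i=0$, i.e.\ in $A_0$. For $j=\ell$, the only new feature is the diagonal term $(e^\alpha)^2=\sum_{j\in\supp(\alpha)}c_{j,\alpha}t_j$: here $i \in \supp(\alpha)$ and the $t_i$-component lives in $A_1$ while the other $t_j$ live in $A_0$, explaining the entry $\{1,0\}$. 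The only subtlety worth flagging is that one must separately handle the possibilities $\alpha=\beta$ and $\alpha=\beta^c$ inside the $a_k\cdot a_k$ entry; neither causes problems since $(e^\alpha)^2 \in A_1\oplus A_0$ and the $\alpha=\beta^c$ case gives zero. Overall there is no real obstacle — the result is combinatorial verification, with the parity of $\alpha_i$ and the $\mathbb{F}_2$-addition doing all the work.
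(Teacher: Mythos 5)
Your proposal is correct and follows essentially the same route as the paper: a case-by-case verification over the basis of each eigenspace from Lemma~\ref{ideigenspaces}, tracking the $i$-th coordinate of the codewords to locate the products, with the same treatment of the diagonal case $(e^\alpha)^2$ giving the $\{1,0\}$ entry and the observation that $A_0\cdot A_{a_\ell}$ need only land in $\bigoplus_j A_{a_j}$.
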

\begin{proof}
The first row and column is clear since $t_i$ is a primitive idempotent.  For the row corresponding to $A_0 = \langle t_j, e^\alpha : j \in [n], j \neq i, \alpha_i = 0 \rangle$, observe that the product of any two toral elements from here gives either the same toral element or zero.  The product of two codeword elements gives another with a $0$ in the $i$\textsuperscript{th} position and the product of a toral and codeword element gives a multiple of the same codeword element.  Any codeword element $e^\alpha$ in $A_{a_j}$ has $\alpha_i = 1$, so its product with a codeword element in $A_0$ will also have a $1$ in the $i$\textsuperscript{th} position.  Hence, it will be a non-zero eigenvector for $t_i$, but not necessarily for $a_j$.  The product of $e^\alpha$ with a toral element in $A_0$ will either be a scalar multiple of $e^\alpha$, or zero.

Finally, let $e^\alpha \in A_{a_j}$ and  $e^\beta \in A_{a_k}$.  If $j \neq k$, then $e^\alpha e^\beta$ is either $b_{\alpha,\beta} e^{\alpha + \beta}$ or $0$.  Since $\alpha_i = \beta_i = 1$, the $i$\textsuperscript{th} position of $\alpha+\beta$ is $0$, hence the product is in the zeroth eigenspace.  If $j=k$, then either $\alpha \neq \beta$ and the product is in the zeroth eigenspace as above, or $\alpha = \beta$ and the product is the sum of the toral elements in $\supp(\alpha)$, one of which is $t_i$.
\end{proof}

Note that for a given code and structure parameters, the product of the two eigenspaces may actually be a smaller set than that which is suggested by the fusion law in Table \ref{Fusion law}. Also, if some $a_{i, \alpha}=1$, then this just means that two of the columns (and the corresponding rows) are merged in the fusion table.  This difficulty can be overcome by defining the fusion law to be defined on a set of indeterminates $F$ rather than the eigenvalues and then having a map from $F$ to the eigenvalues.

\begin{corollary}\label{t_iaxis}
If $a_{i, \alpha} \neq 1$ for all $\alpha \in C^*$, then $t_i$ is a primitive $\mathcal{F}$-axis, where $\mathcal{F}$ is the fusion law given in Table $\ref{Fusion law}$.
\end{corollary}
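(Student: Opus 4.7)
The plan is to verify each of the defining conditions of a primitive $\mathcal{F}$-axis in turn, by citing the results already established in this subsection. Essentially no new calculation is required; the corollary is a compilation of Lemma \ref{ideigenspaces} and Proposition \ref{fusiontable}.

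First, $t_i$ is idempotent: this is immediate from the multiplication rule $t_i \cdot t_j = \delta_{i,j} t_i$ in Definition \ref{CodeAlgebra}. Next, under the hypothesis that $a_{i,\alpha}\neq 1$ for all $\alpha\in C^*$ with $\alpha_i=1$, Lemma \ref{ideigenspaces} gives an explicit decomposition
\[
A = A_1 \oplus A_0 \oplus \bigoplus_{\alpha} A_{a_{i,\alpha}},
\]
so $\ad_{t_i}$ is diagonalisable and hence $t_i$ is semisimple. The same lemma identifies the spectrum $\Spec(t_i)$ as a subset of $\{1,0\}\cup\{a_{i,\alpha}:\alpha\in C^*, i\in\supp(\alpha)\}$, which is precisely the index set $\mathcal{F}=\{1,0,a_1,\dots,a_k\}$ of the fusion law in Table \ref{Fusion law}, so $\Spec(t_i)\subseteq\mathcal{F}$.

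The containment $A_\lambda A_\mu \subseteq \bigoplus_{\gamma\in \lambda\star\mu} A_\gamma$ for all $\lambda,\mu\in\Spec(t_i)$ is exactly the content of Proposition \ref{fusiontable}, so nothing further needs to be checked there. Finally, primitivity $A_1=\langle t_i\rangle$ is the first statement of the description of $A_1$ in Lemma \ref{ideigenspaces}. Assembling these four points gives that $t_i$ is a primitive $\mathcal{F}$-axis, as required.

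The only potential subtlety, and the one place I would pause, is the distinction between the formal fusion law (indexed by the symbols $\{1,0,a_1,\dots,a_k\}$) and the actual spectrum: if two of the parameters $a_{i,\alpha}$ coincide or collapse to $0$ or $1$ for different $\alpha$, then several ``columns'' of Table \ref{Fusion law} describe the same eigenspace, and one needs the convention (remarked on immediately after Proposition \ref{fusiontable}) that the fusion law is a map from an indexing set $F$ to eigenvalues. With that convention in place, no issue arises, since the stated fusion containments only get coarser under merging indices, and the axis conditions are preserved.
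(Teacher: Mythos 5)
Your proposal is correct and takes essentially the same route as the paper, which treats the corollary as an immediate assembly of Lemma \ref{ideigenspaces} (idempotency, semisimplicity, spectrum, and primitivity via $A_1=\langle t_i\rangle$) and Proposition \ref{fusiontable} (the fusion containments). Your closing remark about merged eigenvalues is also the same caveat the paper makes right after Proposition \ref{fusiontable}, so nothing is missing.
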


\begin{example}[Code VOA example]
The structure parameters in the code VOA example are $(a,b,c) = (\frac{1}{4}, \lambda, 4\lambda^2)$, so we have the fusion law given in Table \ref{tab:Majfusion}.

\begin{table}[!htb]
\setlength{\tabcolsep}{4pt}
\renewcommand{\arraystretch}{1.5}
\centering
\begin{tabular}{c||c|c|c}
 & $1$ & $0$ & $\frac{1}{4}$ \\ \hline \hline
$1$ & $1$ &  & $\frac{1}{4}$ \\ \hline
$0$ &  & $0$ & $\frac{1}{4}$ \\ \hline
$\frac{1}{4}$ & $\frac{1}{4}$  & $\frac{1}{4}$ & $1, 0$ 
\end{tabular}
\caption{VOA fusion law with respect to $t_i$.}\label{tab:Majfusion}
\end{table}
\end{example}

Suppose that $a_{i,\alpha} \neq 1$ for all $\alpha \in C^*$ with $\alpha_i =1$.  Then, we see from Table \ref{Fusion law} that the fusion law for $t_i$ is $\mathbb{Z}_2$-graded.  So, this induces a $\ZZ_2$-grading on $A$:
\[
A = A_+ \oplus A_-,
\]
where $A_+ := A_0 \oplus A_1$ and $A_- := \bigoplus A_{a_{i, \alpha}}$. If $\ch(\mathbb{F}) \neq 2$, there is a natural algebra automorphism of order at most two, denoted by $\tau_i \in \Aut(A)$, which acts trivially on $A_+$ and negates the vectors in $A_-$. We call $\tau_i$ the \emph{Miyamoto involution} associated to $t_i$.  Note that when $A$ is non-degenerate, $A_-$ is non-empty and $\tau_i$ does indeed have order two.

We call the group $M = \langle \tau_i : i = 1, \dots, n\rangle$ generated by the Miyamoto involutions, the \emph{Miyamoto group}.

\begin{proposition}
Suppose $\ch(\mathbb{F}) \neq 2$ and let $A_C$ be a non-degenerate code algebra with regular structure parameters where $a_{i, \alpha} \neq 1$ for all $i \in \supp(\alpha)$, $\alpha \in C^*$.  Define $G \leq \Aut(A)$ to be the group generated by $\Aut(C)$ and the Miyamoto group $M$.  Then 
\[
G = M{:}\Aut(C)
\]
is a semi-direct product of $M = \langle \tau_i : i = 1, \dots, n\rangle$ by $\Aut(C)$.
\end{proposition}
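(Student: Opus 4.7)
To identify $G$ with the semidirect product $M{:}\Aut(C)$, I need to verify three things: that $\Aut(C)$ acts faithfully on $A_C$ (already done in Lemma \ref{inducedaction}, using that the parameters are regular), that $M$ is normal in $G$, and that $M \cap \phi(\Aut(C)) = 1$. The first step is to write down $\tau_i$ explicitly on the basis: it fixes every $t_j$, fixes $e^\alpha$ whenever $\alpha_i = 0$, and sends $e^\alpha \mapsto -e^\alpha$ whenever $\alpha_i = 1$. This is immediate from the decomposition $A_+ = A_0 \oplus A_1$ and $A_- = \bigoplus A_{a_{i,\alpha}}$ given in Lemma \ref{ideigenspaces}.

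Next, I would prove that $\phi(\Aut(C))$ normalises $M$ by showing that for any $g \in \Aut(C)$,
\[
\phi(g)\,\tau_i\,\phi(g)^{-1} = \tau_{ig^{-1}}.
\]
The cleanest way is to invoke the general principle: if $\sigma \in \Aut(A_C)$ and $t$ is an idempotent whose fusion law is $\ZZ_2$-graded (with Miyamoto involution $\tau_t$), then $\sigma$ maps the eigenspaces of $t$ to those of $\sigma(t)$ with the same eigenvalues, and hence maps the $\pm$ parts of the grading of $t$ to the corresponding $\pm$ parts of the grading of $\sigma(t)$; this forces $\sigma \tau_t \sigma^{-1} = \tau_{\sigma(t)}$. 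Applied to $\sigma = \phi(g)$ and $t = t_i$, together with $\phi(g)(t_i) = t_{ig^{-1}}$, this gives the displayed formula and shows $\phi(g) M \phi(g)^{-1} = M$. If desired, the identity can also be checked directly by evaluating both sides on the basis $\{t_j, e^\alpha\}$ using the explicit description of $\tau_i$ from the first step.

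For the trivial intersection $M \cap \phi(\Aut(C)) = 1$, I use that every Miyamoto involution $\tau_j$ fixes each toral basis element $t_i$, hence so does every element of $M$. If some $\phi(g)$ with $g \in \Aut(C)$ happens to lie in $M$, then $\phi(g)(t_i) = t_{ig^{-1}} = t_i$ for all $i$; since the $t_i$ are linearly independent this forces $g = 1$, and then $\phi(g) = \id$. Combining the two facts, $G = M \cdot \phi(\Aut(C))$ with $M \trianglelefteq G$ and trivial intersection, which is precisely $G = M{:}\Aut(C)$ (identifying $\Aut(C)$ with its faithful image under $\phi$).

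The only step that requires any care is the conjugation identity $\phi(g)\tau_i\phi(g)^{-1} = \tau_{ig^{-1}}$: one must make sure the $\ZZ_2$-grading on which $\tau_i$ depends is transported correctly by $\phi(g)$, and this is where non-degeneracy (so that $\tau_i$ has order exactly $2$) and the hypothesis $a_{i,\alpha}\neq 1$ (so that the grading exists) are used. Everything else is a routine normality/intersection argument.
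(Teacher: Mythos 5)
Your proposal is correct and follows essentially the same route as the paper: faithfulness of the $\Aut(C)$-action from Lemma \ref{inducedaction}, the conjugation identity $\phi(g)\tau_i\phi(g)^{-1}=\tau_{ig^{-1}}$ (the paper writes it as $\tau_i g = g\tau_{ig}$, justified by $\alpha_i = (\alpha g)_{ig}$, which is exactly your direct basis check), and trivial intersection because $M$ fixes every $t_i$ while $\Aut(C)$ permutes them faithfully. Your alternative, more conceptual justification of the conjugation identity via transport of the $\ZZ_2$-grading is a harmless embellishment, not a different proof.
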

\begin{proof}
By Lemma \ref{inducedaction}, $\Aut(C)$ has a well-defined faithful action on $A_C$. In particular, it acts faithfully on the $t_i$ by permuting the indices.  As $M$ fixes all the indices, $M \cap \Aut(C) = 1$.  Let $\alpha \in C$ and $i = 1, \dots, n$.  Since $\alpha_i = (\alpha g)_{ig}$ for all $g \in \Aut(C)$, we see that
\[
\tau_i g = g \tau_{ig}
\]
In particular, we see that $M \Aut(C) = \Aut(C)M$  is a group and hence $G = M\Aut(C)$.  Moreover, $M$ is a normal subgroup of $G$ and so we have a semi-direct product.
\end{proof}

The fusion law for an idempotent $e$ satisfies the \emph{Seress condition} if $1,0 \in \mathcal{F}$ and
\[
A_0 A_\lambda \subseteq A_\lambda
\]
for all eigenvalues $\lambda \neq 1$ (see \cite[Section 2.2]{Axial2}).

\begin{proposition}\label{Seress}
Suppose that $a_{i, \alpha} \neq 1$ for all $i \in \supp(\alpha)$, $\alpha \in C^*$.  Then, the fusion law for $t_i$ satisfies the Seress condition if and only if $t_i$ has at most one eigenvalue which is not $0$ or $1$.
\end{proposition}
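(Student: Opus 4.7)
The plan is to prove both directions by directly inspecting the fusion table and, for the harder direction, exhibiting an explicit product that escapes $A_{a_j}$.

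For the easy direction ($\Leftarrow$), suppose $t_i$ has at most one eigenvalue outside $\{0,1\}$. If there is no such eigenvalue, then $A = A_1 \oplus A_0$ and the Seress condition is vacuous; if there is exactly one, say $a_1$, then the row of Table \ref{Fusion law} indexed by $0$ only contains the single entry $a_1$ in the column of $A_{a_1}$, so $A_0 A_{a_1} \subseteq A_{a_1}$ and the Seress condition holds immediately.

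For the hard direction ($\Rightarrow$), I would argue by contrapositive. Assume $t_i$ has (at least) two distinct eigenvalues $a_j \neq a_k$ outside $\{0,1\}$; by Lemma \ref{ideigenspaces}, choose $\alpha, \gamma \in C^*$ both supported at $i$ with $a_{i,\alpha} = a_j$ and $a_{i,\gamma} = a_k$, and put $\beta := \alpha + \gamma \in C$. The key calculation is $e^\alpha \cdot e^\beta$: I want to check first that $\beta \in C^*$ with $\beta_i = 0$, so that $e^\beta \in A_0$, and that $\beta \neq \alpha, \alpha^c$, so that the $b$-rule for multiplication applies. The value $\beta_i = 0$ is immediate from $\alpha_i = \gamma_i = 1$. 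If $\beta = \0$, then $\alpha = \gamma$, contradicting $a_j \neq a_k$; if $\beta = \1$, then $\gamma = \alpha^c$, which would force $\gamma_i = 0$, again a contradiction. The same reasoning excludes $\beta \in \{\alpha, \alpha^c\}$. Hence
\[
e^\alpha \cdot e^\beta = b_{\alpha,\beta}\, e^{\alpha+\beta} = b_{\alpha,\beta}\, e^\gamma,
\]
which is nonzero by non-degeneracy and lies in $A_{a_k}$. Since $e^\alpha \in A_{a_j}$ and the eigenspaces $A_{a_j}$, $A_{a_k}$ are independent, this shows $A_0 A_{a_j} \not\subseteq A_{a_j}$, so the Seress condition fails.

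The only real subtlety is verifying that $\beta$ falls in the generic multiplication case (i.e.\ avoiding $\0$, $\1$, $\alpha$ and $\alpha^c$), which is the step I would write out carefully; everything else is a direct read-off from the definition of multiplication and from Table \ref{Fusion law}.
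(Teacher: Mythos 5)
Your proof is correct, and the forward direction is argued differently from the paper. The paper assumes the Seress condition holds for some eigenvalue $\lambda \neq 0,1$, notes that then $e^\beta e^\alpha = b_{\alpha,\beta}e^{\alpha+\beta}$ must lie in $A_\lambda$ for all $\alpha \in C_0(i)$ with $\alpha \neq \beta^c$, and then runs a dimension count (at least $\frac{|C|}{2}-1$ or $\frac{|C|}{2}$ vectors forced into $A_\lambda$, compared against $\dim(A_1 \oplus A_0) = n + \frac{|C|}{2}-1$ and $\dim A$), with a case split on whether $\1 \in C$, to conclude $\lambda$ is the only non-trivial eigenvalue. You instead prove the contrapositive by exhibiting a single violating product: given two codewords $\alpha,\gamma$ supported at $i$ with distinct eigenvalues $a_j \neq a_k$, the codeword $\beta = \alpha+\gamma$ satisfies $\beta_i=0$, $\beta \in C^*$ and $\beta \neq \alpha,\alpha^c$, so $e^\beta \in A_0$ and $e^\alpha e^\beta = b_{\alpha,\beta}e^\gamma$ is a non-zero element of $A_{a_k}$, breaking $A_0A_{a_j} \subseteq A_{a_j}$. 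Both arguments rest on the same multiplication fact and on non-degeneracy (in force throughout that subsection, so $b_{\alpha,\beta}\neq 0$ is legitimate), but yours is more local and avoids the counting and the $\1 \in C$ case distinction, while the paper's count additionally makes visible how large $A_\lambda$ is forced to be. Two cosmetic points: when you exclude $\beta = \alpha^c$ the cleanest reason is simply $\gamma \in C^*$ so $\gamma \neq \1$ (the "$\gamma_i = 0$" reasoning used for $\beta = \1$ does not transfer verbatim), and in the easy direction the case with no eigenvalue outside $\{0,1\}$ is not vacuous — you still need $A_0A_0 \subseteq A_0$, which is immediate from Table \ref{Fusion law}.
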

\begin{proof}
By Lemma \ref{fusiontable}, it is clear that the fusion table satisfies the Seress condition if $t_i$ has at most one eigenvalue which is not $0$ or $1$.  Conversely, suppose that the fusion law for $t_i$ satisfies the Seress condition and there is at least one eigenvalue $\lambda \neq 0,1$.  By Lemma \ref{ideigenspaces}, all $e^\alpha$ such that $\alpha_i=0$ are in the $0$-eigenspace.  Let $e^\beta \in A_\lambda$.  By assumption, the fusion law satisfies the Seress condition and hence $b_{\alpha, \beta}e^{\alpha+\beta} = e^\beta e^\alpha \in A_\lambda$ for all $\alpha \in C_0(i)$ such that $\alpha \neq \beta^c$.  By Lemma \ref{innerC}, this has dimension at least $\frac{|C|}{2} -1$ if $\1 \in C$ and $\frac{|C|}{2}$ if $\1 \notin C$.  By Lemma \ref{ideigenspaces}, $\dim A_1 \oplus A_0 = n + \frac{|C|}{2} - 1$.  Since $\dim A$ is $n + |C|-2$ if ${\bf 1 } \in C$ and $n + |C|-1$ if $\1 \notin C$, by a counting argument we see that $\lambda$ can be the only non-trivial eigenvalue.
\end{proof}

We note that if there is exactly one eigenvalue $a$ of $t_i$ which is not equal to $1$ or $0$, then the fusion law is the same as for axial algebras of Jordan type $a$ (see \cite{Axial2}).  The following will be allow us to show when the algebra is unital.

\begin{lemma}\label{tscaling}
Suppose $A_C$ is a non-degenerate code algebra.  Then, there exists an element $t \in A_C(\lambda)$ such that $t \cdot t_i = t_i$ and $t \cdot e^\alpha$ is a scalar multiple of $e^\alpha$ for all $i \in [n]$, $\alpha \in C^*$.  Moreover, there is a unique such element, it is an idempotent and it is given by $t = \sum\limits_{i=1}^n t_i$.
\end{lemma}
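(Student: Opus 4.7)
The plan is to verify directly that $t = \sum_{i=1}^n t_i$ has the required properties, then establish uniqueness using non-degeneracy, and finally observe idempotency as a one-line computation.

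First I would check existence. Using the definition of the product,
\[
t \cdot t_j = \sum_{i=1}^n t_i \cdot t_j = \sum_{i=1}^n \delta_{i,j} t_i = t_j,
\]
and, since $t_i \cdot e^\alpha = 0$ when $\alpha_i = 0$ and $t_i \cdot e^\alpha = a_{i,\alpha} e^\alpha$ when $\alpha_i = 1$,
\[
t \cdot e^\alpha = \Bigl(\sum_{i \in \supp(\alpha)} a_{i,\alpha}\Bigr) e^\alpha,
\]
which is indeed a scalar multiple of $e^\alpha$. So this $t$ does the job, and idempotency follows from the first computation applied termwise: $t \cdot t = \sum_j t \cdot t_j = \sum_j t_j = t$.

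Next I would establish uniqueness. Write a candidate $t' = \sum_i \lambda_i t_i + \sum_{\alpha \in C^*} \lambda_\alpha e^\alpha$ and compute
\[
t' \cdot t_j = \lambda_j t_j + \sum_{\alpha \in C^* : \alpha_j = 1} \lambda_\alpha a_{j,\alpha}\, e^\alpha.
\]
Demanding this equal $t_j$ forces $\lambda_j = 1$ and $\lambda_\alpha a_{j,\alpha} = 0$ for every $\alpha \in C^*$ with $\alpha_j = 1$. This is where non-degeneracy enters twice: since all the structure parameters $a_{j,\alpha}$ are non-zero we conclude $\lambda_\alpha = 0$ whenever $\alpha_j = 1$, and since $\supp(C) = [n]$ every $\alpha \in C^*$ has some $j \in \supp(\alpha)$, so $\lambda_\alpha = 0$ for all $\alpha \in C^*$. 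Hence $t' = \sum_i t_i = t$.

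There is no real obstacle here; the only subtlety worth flagging is that uniqueness uses only the first condition together with non-degeneracy, so the second condition (scalar multiplication on the $e^\alpha$) is automatically inherited. The lemma therefore reduces to a direct calculation plus a short linear-algebra argument driven by the non-degeneracy hypotheses $\supp(C) = [n]$ and $a_{i,\alpha} \neq 0$.
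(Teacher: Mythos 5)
Your proof is correct, but your uniqueness argument runs along a different axis than the paper's. You extract everything from the single condition $t'\cdot t_j = t_j$: comparing coefficients in $t'\cdot t_j = \lambda_j t_j + \sum_{\alpha:\,\alpha_j=1}\lambda_\alpha a_{j,\alpha}e^\alpha$ kills the $\lambda_\alpha$ using only the non-vanishing of the $a$-parameters, and shows the second condition (that $t\cdot e^\alpha$ be a multiple of $e^\alpha$) is not needed for uniqueness. The paper argues the other way around: it first assumes some $\lambda_\beta\neq 0$, computes $t\,e^\beta$, and observes that the toral part $\lambda_\beta\sum_{i\in\supp(\beta)}c_{i,\beta}t_i$ cannot appear in a scalar multiple of $e^\beta$, contradicting $c_{i,\beta}\neq 0$; only then does it use $t\cdot t_j=t_j$ to force $\lambda_j=1$. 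So the paper leans on the $c$-parameters and the second condition, while you lean on the $a$-parameters and the first; yours is slightly more economical and establishes the marginally stronger fact that $\sum_i t_i$ is the unique element acting as the identity on the standard torus. You also verify existence and the scalar action on each $e^\alpha$ explicitly, which the paper leaves implicit. One tiny imprecision: the step ``every $\alpha\in C^*$ has some $j\in\supp(\alpha)$'' follows simply from $\alpha\neq\0$, not from the hypothesis $\supp(C)=[n]$ (which says every coordinate lies in the support of some codeword); the conclusion is unaffected.
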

\begin{proof}
We write $t = \sum \limits_{i=1}^n \lambda_i t_i + \sum \limits_{\alpha \in C^*} \lambda_\alpha e^\alpha$, for some $\lambda_i, \lambda_\alpha \in \mathbb{R}$.   Suppose that $\lambda_\beta \neq 0$ for some $\beta \in C^*$.  We calculate products:
\[
te^\beta = \sum \limits_{i \in \supp(\beta)} \lambda_i a_{i, \beta}\, e^\beta + \sum \limits_{\alpha \neq \beta, \beta^c} \lambda_\alpha b_{\alpha, \beta}\, e^{\alpha + \beta} + \lambda_\beta \sum \limits_{i \in \supp(\beta)} c_{i, \beta} t_i
\]
By assumption this is some scalar multiple of $e^\beta$.  So, by considering coefficients of $t_i$, we see that $c_{i, \beta} = 0$ for all $i \in [n]$, contradicting $A_C$ being non-degenerate. Hence, $\lambda_\beta = 0$ for all $\beta \in C^*$.

Considering the product with $t_j$, we now see that
\[
t t_j = \sum \limits_{i=1}^n \lambda_i t_i t_j = \lambda_j t_j
\]
and hence we see that $\lambda_j = 1$ for all $j \in [n]$.  Therefore, when such a $t$ does exist it is $\sum\limits_{i=1}^n t_i$ and so is unique.  It is easy to see that such a $t$ is an idempotent.
\end{proof}


\begin{corollary}
Let $A_C$ be a non-degenerate code algebra such that for every $\alpha \in C^*$, $a_{i, \alpha} = a_{j, \alpha}$ for all $i,j \in \supp(\alpha)$.  Then $A$ has an identity if and only if $a_{i, \alpha} = \frac{1}{|\alpha|}$ for all $i \in \supp(\alpha), \alpha \in C^*$.
\end{corollary}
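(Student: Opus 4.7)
The plan is to use Lemma \ref{tscaling} to pin down the only possible identity element and then convert the identity condition into a constraint on the structure parameters.

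First I would argue the forward direction. Suppose $A$ has an identity $1_A$. Then in particular $1_A \cdot t_i = t_i$ for every $i \in [n]$, and $1_A \cdot e^\alpha = e^\alpha$ is a (nonzero) scalar multiple of $e^\alpha$ for every $\alpha \in C^*$. These are exactly the two conditions in the hypothesis of Lemma \ref{tscaling}, so by the uniqueness clause of that lemma we must have
\[
1_A = t := \sum_{i=1}^n t_i.
\]
Next I would compute $t \cdot e^\alpha$ directly from the multiplication table. Since $t_i \cdot e^\alpha = a_{i,\alpha} e^\alpha$ when $\alpha_i = 1$ and $0$ otherwise, we get
\[
t \cdot e^\alpha = \sum_{i \in \supp(\alpha)} a_{i,\alpha}\, e^\alpha.
\]
By the standing assumption that $a_{i,\alpha} = a_{j,\alpha}$ for all $i,j \in \supp(\alpha)$, writing this common value as $a_\alpha$ gives $t \cdot e^\alpha = |\alpha| a_\alpha e^\alpha$. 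The requirement $t \cdot e^\alpha = e^\alpha$ then forces $|\alpha| a_\alpha = 1$, that is, $a_{i,\alpha} = \frac{1}{|\alpha|}$ for all $i \in \supp(\alpha)$ and all $\alpha \in C^*$.

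For the converse, assuming $a_{i,\alpha} = \frac{1}{|\alpha|}$, the computation above gives $t \cdot e^\alpha = e^\alpha$, and $t \cdot t_j = \sum_i t_i \cdot t_j = t_j$ is immediate from $t_i \cdot t_j = \delta_{i,j} t_i$. Hence $t$ multiplies each basis element trivially, so by linearity $t$ is the identity of $A$.

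There is essentially no obstacle here: Lemma \ref{tscaling} has already done the structural work of singling out $\sum t_i$ as the only candidate identity, so the proof reduces to the one-line computation of $t \cdot e^\alpha$ under the symmetry hypothesis on $a_{i,\alpha}$.
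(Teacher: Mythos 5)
Your proof is correct and follows essentially the same route as the paper: invoke Lemma \ref{tscaling} to identify $t = \sum_{i=1}^n t_i$ as the only candidate identity, then compute $t\cdot e^\alpha = \sum_{i\in\supp(\alpha)} a_{i,\alpha}\, e^\alpha$ and use the constancy hypothesis to conclude $a_{i,\alpha} = \tfrac{1}{|\alpha|}$. Your write-up is just slightly more explicit in checking that an identity satisfies the hypotheses of that lemma, which is a fine addition but not a different argument.
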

\begin{proof}
By Lemma \ref{tscaling}, the only candidate for an identity is $t = \sum t_i$.  Since for each $\alpha \in C^*$, we have
\[
t e^\alpha = \sum_{i \in \supp(\alpha)} a_{i, \alpha} e^\alpha
\]
it is clear that $t$ is the identity if and only if $a_{i, \alpha} = \frac{1}{|\alpha|}$ for all $i \in \supp(\alpha)$.
\end{proof}


\subsection{Frobenius form}

We now made the analogous definition for code algebras as for axial algebras (see Definition \ref{FrobeniusAxial}).

\begin{definition}
A \emph{Frobenius form} on a non-degenerate code algebra $A$ is a bilinear form $(\cdot, \cdot) : A \times A \to \mathbb{F}$ such that the form associates.  That is, for all $x,y,z \in A$,
\[
(x,yz) = (xy,z)
\]
\end{definition}

We now collect some basic facts about Frobenius forms. The following is adapted from \cite[Proposition 3.5]{Axial1}.

\begin{lemma}
An associative bilinear form on a non-degenerate code algebra $A$ is symmetric.
\end{lemma}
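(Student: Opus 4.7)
The plan is to reduce by bilinearity to showing $(u,v) = (v,u)$ for $u,v$ ranging over the basis $\{t_i\} \cup \{e^\alpha\}$, and then exploit the fact that every basis element of a non-degenerate code algebra can be written as a product of two algebra elements. Indeed, each $t_i$ is idempotent, so $t_i = t_i \cdot t_i$; and for each $\alpha \in C^*$, the non-degeneracy hypothesis (together with $\supp(C) = [n]$, which guarantees some $i \in \supp(\alpha)$ exists) gives $a_{i,\alpha} \neq 0$, so $e^\alpha = a_{i,\alpha}^{-1} (t_i \cdot e^\alpha)$.

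The key computation would then be purely formal. Given $u$ and writing $v = \lambda\, w_1 \cdot w_2$ for suitable $\lambda \in \mathbb{F}$ and basis elements $w_1, w_2$, I would chain together associativity of the form with commutativity of the algebra product:
\begin{align*}
(u,v) &= \lambda (u, w_1 \cdot w_2) = \lambda (u \cdot w_1, w_2) = \lambda (w_1 \cdot u, w_2) \\
&= \lambda (w_1, u \cdot w_2) = \lambda (w_1, w_2 \cdot u) = \lambda (w_1 \cdot w_2, u) = (v,u).
\end{align*}
Each equality uses either the associativity of the form $(x, yz) = (xy, z)$ or the commutativity $xy = yx$ in $A$, which are the only two inputs we have.

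In this argument the real content is the observation that every basis vector factors as a product; once that is in place, the rest is a mechanical three-step shuffle. Correspondingly, the only place where something could go wrong is in the factorization step for $e^\alpha$, which is why the non-degeneracy assumption (ensuring $a_{i,\alpha} \neq 0$ and that $\supp(\alpha)$ is nonempty) is needed. This is a mild obstacle and is built into the hypotheses of the lemma, so no further case analysis on weights or on whether $\1 \in C$ should be required.
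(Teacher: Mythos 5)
Your proof is correct and follows essentially the same route as the paper: factor each basis element as a product (using $t_i = t_i t_i$ and $e^\alpha = a_{i,\alpha}^{-1}\, t_i e^\alpha$, valid by non-degeneracy), then alternate associativity of the form with commutativity of the product to shuffle the factored element across. Factoring the second argument rather than the first, as you do, is an immaterial difference.
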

\begin{proof}
We need only show the result on a basis of $A$; we use the standard basis consisting of $t_1, \dots, t_n$ and $e^\alpha$, $\alpha \in C^*$.  We note that for each $b$ in the basis, $b = b_1b_2$ where $b_1, b_2$ are some scalar multiple of a basis element.  Explicitly, $t_i = t_i t_i$ and $e^\alpha = (\frac{1}{a_{i,\alpha}} t_i) e^\alpha$ where $\alpha_i = 1$.  Now for some other element $c \in A$ we have
\[
(b,c) = (b_1b_2,c) = (b_1,b_2c) = (b_1,cb_2) = (b_1c,b_2) = (cb_1,b_2) = (c,b).
\]
\end{proof}

\begin{lemma}{\normalfont \cite{Axial1}}\label{Frobperp}
Suppose $A$ is a non-degenerate code algebra which admits a Frobenius form.  Then, the eigenspaces of a semisimple element of $A$ are perpendicular.
\end{lemma}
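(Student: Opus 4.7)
The plan is to exploit the associativity of the Frobenius form together with the eigenspace decomposition induced by a semisimple element. This is the standard argument showing that distinct eigenspaces of a self-adjoint (here, adjoint-action) operator are orthogonal.

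First, let $a \in A$ be a semisimple element, and write $A = \bigoplus_{\lambda} A_\lambda(a)$ for its eigenspace decomposition under $\ad_a$. Take $x \in A_\lambda(a)$ and $y \in A_\mu(a)$ with $\lambda \neq \mu$. The goal is to show $(x,y) = 0$.

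The key computation uses associativity of the form with the middle factor $a$:
\[
\lambda(x,y) = (ax,y) = (x,ay) = \mu(x,y),
\]
so $(\lambda - \mu)(x,y) = 0$. Since $\lambda \neq \mu$, we get $(x,y) = 0$, as required. Extending by bilinearity to arbitrary elements of $A_\lambda(a)$ and $A_\mu(a)$ completes the argument.

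There is no real obstacle here: the proof is a direct two-line consequence of the associativity axiom $(ax,y) = (x,ay)$ applied to eigenvectors. The only thing worth noting is that the statement requires nothing beyond semisimplicity of the single element $a$ and the existence of a Frobenius form — the code algebra structure plays no role in this particular lemma, which is why the authors simply cite \cite{Axial1}.
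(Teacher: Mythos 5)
Your argument is exactly the paper's proof: apply associativity of the form to eigenvectors, obtaining $\lambda(x,y)=(ax,y)=(x,ay)=\mu(x,y)$, hence $(x,y)=0$ when $\lambda\neq\mu$. Correct and identical in approach, including the observation that the code algebra structure is not used and the lemma is simply adapted from \cite{Axial1}.
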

\begin{proof}
Let $x \in A^\lambda_a$, $y \in A^\mu_a$, $\lambda \neq \mu$, with respect to a semisimple element $a$.  We have
\[
\lambda(x,y) = (xa,y) = (x,ay) = \mu (x,y)
\]
Since $\lambda \neq \mu$, $(x,y) =0$.
\end{proof}

In particular, we may take the semisimple element to be an $\mathcal{F}$-axis in the above lemma.

\begin{theorem}\label{frob}
Let $A_C(\Lambda)$ be a non-degenerate code algebra. Then, $A_C(\Lambda)$ admits a Frobenius form if and only if there exist constants $\lambda_1, \dots, \lambda_n \in \mathbb{F}$ such that
\begin{enumerate}
\item[$1.$] $\lambda_\alpha := \frac{c_{i,\alpha}}{a_{i, \alpha}} \lambda_i$ is constant for all $i \in \supp(\alpha)$.
\item[$2.$] $b_{\alpha, \beta} \lambda_\gamma = b_{\alpha, \gamma} \lambda_\beta = b_{\beta,\gamma} \lambda_\alpha$ for all $\alpha, \beta, \gamma \in C^*$ such that $\alpha + \beta = \gamma$.
\end{enumerate}
Furthermore, when the Frobenius form does exist, it is given by
\begin{align*}
(t_i, t_j ) & = \delta_{i,j} \lambda_i \\
(t_i , e^\alpha) &= \begin{cases}
\lambda_{i, \alpha} & \mbox{if } C = \{\0, \1, \alpha, \alpha^c\}, |\alpha| = 1, a_{\supp(\alpha), \alpha}=1\\
0 & \mbox{otherwise} \end{cases} \\
(e^\alpha, e^\beta) & = \lambda_\alpha \delta_{\alpha, \beta}
\end{align*}
In the exceptional case, where $C = \{\0, \1, \alpha, \alpha^c\}$ with $|\alpha| = 1$ and $a_{\supp(\alpha), \alpha}=1$, the Frobenius form is uniquely defined \textup{(}up to scaling\textup{)} by the $\lambda_i$ and $\lambda_{i, \alpha}$ and otherwise it is uniquely defined \textup{(}up to scaling\textup{)} by the $\lambda_i$.
\end{theorem}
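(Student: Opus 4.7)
The plan is to use the preceding lemma, which shows any Frobenius form on $A_C(\Lambda)$ is symmetric, so the form is determined by its values on unordered pairs of basis vectors and associativity need only be tested on triples of basis vectors. I will prove necessity by applying associativity to each type of triple, isolate the exceptional case by a separate argument, then verify sufficiency by checking associativity on every triple given the conditions.

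For necessity, I would first apply associativity to $(t_i, t_j, t_k)$: fixing $i = k$ with $j \neq i$ yields $(t_i, t_j) = 0$, so $(t_i, t_j) = \delta_{i,j}\lambda_i$ for scalars $\lambda_i := (t_i, t_i)$. Next, triples of the form $(t_i, t_j, e^\alpha)$ pin down the mixed values: if $\alpha_j = 0$ then $t_j e^\alpha = 0$ forces $(t_i, e^\alpha) = 0$ whenever $\alpha_i = 0$, and the subcase $\alpha_j = 1$ with $j \neq i$ forces $(t_i, e^\alpha) = 0$ whenever some $j \neq i$ has $\alpha_j = 1$; the remaining diagonal constraint forces $(t_i, e^\alpha)(1 - a_{i,\alpha}) = 0$. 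Combining, $(t_i, e^\alpha) = 0$ unless $\supp(\alpha) = \{i\}$ and $a_{i,\alpha} = 1$. To extract condition 1, I would apply associativity to $(t_j, e^\alpha, e^\alpha)$ for $j \in \supp(\alpha)$: one side is $c_{j,\alpha}\lambda_j$ and the other is $a_{j,\alpha}(e^\alpha, e^\alpha)$, which simultaneously forces the ratio $c_{j,\alpha}\lambda_j/a_{j,\alpha}$ to be independent of $j \in \supp(\alpha)$ and pins down $\lambda_\alpha := (e^\alpha, e^\alpha)$. For condition 2 and off-diagonal orthogonality, I turn to $(e^\alpha, e^\beta, e^\gamma)$: the critical case is $\alpha + \beta + \gamma = \0$ with all three pairwise distinct and non-complementary, where both sides reduce to multiples of diagonal values and yield $b_{\alpha,\beta}\lambda_\gamma = b_{\beta,\gamma}\lambda_\alpha = b_{\alpha,\gamma}\lambda_\beta$; all other configurations force $(e^\alpha, e^\beta) = 0$ for $\alpha \neq \beta$ by choosing a suitable $\gamma$ using linearity of $C$.

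To isolate the exceptional case, suppose $(t_i, e^\alpha) \neq 0$, so from the above $\supp(\alpha) = \{i\}$, $|\alpha| = 1$ and $a_{i,\alpha} = 1$. If there exists any $\beta \in C^*$ with $\beta \neq \alpha, \alpha^c$, then replacing $\beta$ by $\alpha + \beta$ if necessary (which still lies in $C^* - \{\alpha, \alpha^c\}$) I may assume $\beta_i = 0$; associativity applied to $(t_i, e^\beta, e^{\alpha+\beta})$ then gives a nonzero left side $b_{\beta,\alpha+\beta}(t_i, e^\alpha)$ but a zero right side (since $t_i e^\beta = 0$), a contradiction. Hence $C^* = \{\alpha, \alpha^c\}$ and $C = \{\0, \1, \alpha, \alpha^c\}$, explaining why this configuration is the only source of extra freedom in the form.

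For sufficiency I would define the form by the stated formulas and check associativity on every basis triple: cases with at most two $e$-vectors reduce either to $0 = 0$ via the diagonal/orthogonal structure or to condition 1, while $(e^\alpha, e^\beta, e^\gamma)$ splits into subcases by pairwise equality, complementarity, and whether $\alpha + \beta + \gamma = \0$, the only nonzero subcase reducing exactly to condition 2. Uniqueness up to scaling by the $\lambda_i$ (and $\lambda_{i,\alpha}$ in the exceptional case) is immediate from the explicit formulas produced in the necessity argument. The main obstacle will be the exhaustive enumeration of subcases in the triple $(e^\alpha, e^\beta, e^\gamma)$: one must carefully track when the sums $\alpha+\beta$, $\beta+\gamma$, $\alpha+\gamma$ collapse to $\0$, $\1$ or to one of $\alpha, \beta, \gamma$, and must supply witnessing third codewords to derive off-diagonal orthogonality — and it is precisely this witness step that fails when $C$ is as small as in the exceptional case, accounting for the exception in the theorem.
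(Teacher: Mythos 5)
Your outline follows essentially the paper's own route: testing associativity on basis triples (the paper merely packages the diagonality step as the lemma that distinct eigenspaces of a semisimple element are perpendicular), with condition 1 extracted from $(t_j,e^\alpha,e^\alpha)$, condition 2 from codeword triples summing to $\0$, and the exceptional case isolated by a witness codeword $\beta\neq\alpha,\alpha^c$; your witness argument via $(t_i,e^\beta,e^{\alpha+\beta})$ with $\beta_i=0$ is a correct variant of the paper's, which uses $(e^\alpha,e^\beta,e^\beta)$ with $\beta_i=1$.

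There is, however, one step that fails as written: you cannot derive $(e^\alpha,e^{\alpha^c})=0$ from triples of codeword vectors alone, no matter which $\gamma$ you choose. Whenever a zero product $e^\beta e^{\beta^c}=0$ occurs, the other side of the associativity identity involves only a non-complementary pair, so you get no information; and a triple with $\alpha+\beta+\gamma=\1$ yields $b_{\alpha,\beta}(e^{\gamma},e^{\gamma^c})=b_{\beta,\gamma}(e^\alpha,e^{\alpha^c})$, which only ties the complementary-pair values to one another --- for constant $b$, setting all of them equal to a common nonzero constant violates no codeword-only constraint. The missing identity is a mixed triple, which is exactly what the paper's perpendicularity lemma encodes: for $i\in\supp(\alpha)$, $a_{i,\alpha}(e^\alpha,e^{\alpha^c})=(t_i e^\alpha, e^{\alpha^c})=(t_i, e^\alpha e^{\alpha^c})=0$, and $a_{i,\alpha}\neq 0$ by non-degeneracy. (Two smaller points: in your first step the choice $i=k\neq j$ gives $0=0$; you need the repeated index adjacent, e.g.\ $(t_it_i,t_j)=(t_i,t_it_j)$, to force $(t_i,t_j)=0$. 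And in the exceptional case you should also rule out $C=\{\0,\alpha\}$, which non-degeneracy does since $\supp(C)=[n]$ and $|\alpha|=1$ would force $\alpha=\1$.) With these repairs the necessity, exceptional-case and sufficiency arguments go through as in the paper.
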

\begin{proof}
Consider a potential such form and define $\lambda_i = (t_i, t_i)$ for all $i \in [n]$. By Lemma \ref{Frobperp}, the eigenspaces of a given $\mathcal{F}$-axis are perpendicular.  So, $(t_i, t_j) = 0$ for $i \neq j$ and, if $a_{i, \alpha} \neq 1$, $(t_i, e^\alpha) =0$ also.  Now, if $|\alpha| \geq 2$, then there exists $j \in \supp(\alpha)$ such that $j \neq i$.  Since $C$ is non-degenerate, $e^\alpha$ and $t_i$ are in different eigenspaces with respect to $t_j$ and so $(t_i, e^\alpha)=0$.

If $\alpha, \beta \in C^*$ are distinct, then there is some position $i$ such that $i \in \supp(\alpha)$ and $i \not \in \supp(\beta)$. So, $e^\alpha$ and $e^\beta$ lie in different eigenspaces with respect to $t_i$ and hence $(e^\alpha, e^\beta)=0$.  Therefore, any Frobenius form is diagonal with respect to our chosen basis, apart from possibly if $|\alpha|=1$ and $a_{i, \alpha}=1$.  We write $\lambda_\alpha := (e^\alpha, e^\alpha)$.

Observe that
\[
(t_i \cdot e^\alpha, e^\beta )  = \begin{cases}
a_{i, \alpha} \lambda_\alpha & \text{if } \alpha = \beta, \alpha_i = 1, \\
0 & \text{otherwise}.
\end{cases}
\]
Whereas,
\[
(t_i, e^\alpha \cdot e^\beta )  = \begin{cases}
c_{i,\alpha}\lambda_i & \text{if } \alpha = \beta, \alpha_i = 1, \\
0 & \text{otherwise}.
\end{cases}
\]
and so we see that we must have $\lambda_\alpha = \frac{c_{i,\alpha}}{a_{i,\alpha}}\lambda_i$, for $(\cdot, \cdot)$ to be a Frobenius form.  This provides the first of our conditions.

We have
\[
(e^\alpha, e^\beta \cdot  e^\gamma ) = \begin{cases}
b_{\beta, \gamma} \lambda_\alpha & \text{if } \alpha = \beta + \gamma, \\
\sum_{i \in \supp(\beta)} c_{i, \beta} (e^\alpha, t_i) & \mbox{if }\beta = \gamma, \\

0 & \text{otherwise}.
\end{cases}
\]

First we consider the case where $|\alpha|=1$.  Let $\{i\} = \supp(\alpha)$.  We claim that either there exists $\beta \in C_i^*$ such that $\beta \neq \alpha$, or $C = \{\0, \1, \alpha, \alpha^c\}$.  Suppose we don't have such a $\beta$; then, $C_i^*= \{\alpha\}$. Now, either $1 \notin C$ and so $C_i = C_i^*= \{\alpha\}$ which implies $C = \{0, \alpha = \1\}$, a contradiction.  Or, $1 \in C$, $C_i = \{  \alpha, \1\}$ and so $C= \{\0, \1, \alpha, \alpha^c\}$, proving the claim.

We consider the above with $\beta = \gamma \neq \alpha$.  If $C = \{\0, \1, \alpha, \alpha^c\}$, then $\beta = \alpha^c$ and we have $(e^\alpha e^{\alpha^c}, e^{\alpha^c})0 = (e^\alpha, e^{\alpha^c} e^{\alpha^c})$.  Otherwise, there exists $\beta \in C_i^*$ such that $\alpha \neq \beta$.  Now, for the form to be Frobenius, we must have $0 = (e^\alpha e^\beta, e^\beta) = (e^\alpha, e^\beta e^\beta) = c_{i, \beta}(e^\alpha, t_i)$.  Therefore $(t_i, e^\alpha) = 0$, as required.

Now, as the condition $\gamma = \alpha + \beta$ is equivalent to $\alpha = \beta + \gamma$, we have $(e^\alpha \cdot e^\beta, e^\gamma ) = (e^\alpha, e^\beta \cdot e^\gamma)$ and the form is indeed Frobenius only if $b_{\alpha, \beta}\lambda_\gamma  = b_{\beta, \gamma}\lambda_\alpha$.  If this condition holds, as the form is symmetric, we also have $(e^\alpha \cdot e^\beta, e^\gamma ) = (e^\beta \cdot e^\alpha, e^\gamma ) = (e^\beta, e^\alpha \cdot e^\gamma)$ and so we obtain the second condition.

Finally, we have $(t_i \cdot t_j, t_k) = (t_i, t_j \cdot t_k)$ and $(t_i \cdot t_j, e^\alpha) = (t_i, t_j \cdot e^\alpha)$. So if $C = \{\0, \1, \alpha, \alpha^c\}$, $|\alpha| = 1$ and $a_{\supp(\alpha), \alpha}=1$, there is no restriction of $(\lambda_{i, \alpha} = (t_i, e^\alpha)$.  This completes the uniqueness proof.
\end{proof}

When we have the structure parameters $\Lambda = (a,b,c)$ with $a \neq 1$, the only possible choice which satisfies the above conditions is $\lambda := \lambda_1 = \dots = \lambda_n$.  Therefore, apart from the exceptional case, $A_C(a,b,c)$ always admits a unique Frobenius form up to scaling.

\begin{proposition}
Let $A_C(\Lambda)$ be a non-degenerate code algebra with regular structure parameters which admits a Frobenius form and let $G:=M:\Aut(C)$ be the group generated by the Miyamoto involutions associated to the $t_i$, $i \in [n]$, and $\Aut(C)$.  Then the form is $G$-invariant if and only if
\[
(t_i, t_i) = (t_{ig}, t_{ig})
\]
for all $g \in G$, $i \in [n]$ and, if $C = \mathbb{F}_2^2$, $\lambda_{\supp(\alpha), \alpha} = \lambda_{\supp(\alpha^c), \alpha^c}$.
\end{proposition}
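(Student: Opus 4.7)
The plan is to check $G$-invariance directly on pairs of standard basis vectors. Since $G = M{:}\Aut(C)$ and a bilinear form is determined by its values on basis pairs, invariance follows from checking under each Miyamoto involution $\tau_i$ and under each element of $\Aut(C)$. Using the explicit description of the Frobenius form from Theorem \ref{frob}, which is essentially diagonal on the standard basis, I would read off the necessary and sufficient conditions for each class of generator separately.

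First I would dispense with the Miyamoto group. Each $\tau_i$ acts on the standard basis by $\pm 1$: it fixes every $t_j$, fixes every $e^\alpha$ with $\alpha_i = 0$, and negates every $e^\alpha$ with $\alpha_i = 1$. On the diagonal entries of the Frobenius form these signs square to $+1$, so $M$-invariance there is automatic. The only potentially non-zero off-diagonal entries $(t_i, e^\alpha)$ arise in the exceptional case of Theorem \ref{frob}, where $\supp(\alpha) = \{i\}$ and $a_{i,\alpha} = 1$; but then $\tau_i$ is not defined (its construction requires $a_{i,\alpha} \neq 1$), while for any $j \neq i$ we have $\alpha_j = 0$ and hence $\tau_j$ fixes both $t_i$ and $e^\alpha$. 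Thus $M$-invariance imposes no condition at all, which is also why the stated condition reads ``for all $g \in G$'' although it is vacuous on the $M$ part.

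Next I would analyse $\Aut(C)$-invariance. Using $t_i^g = t_{ig^{-1}}$ and $(e^\alpha)^g = e^{\alpha g^{-1}}$, invariance on pairs $(t_i,t_j)$ is exactly the stated condition $\lambda_i = \lambda_{ig}$. Invariance on pairs $(e^\alpha, e^\beta)$ gives $\lambda_{\alpha g^{-1}} = \lambda_\alpha$, and using the identity $\lambda_\alpha = \frac{c_{i,\alpha}}{a_{i,\alpha}}\lambda_i$ for any $i \in \supp(\alpha)$ together with the $G$-regularity of $\Lambda$, this collapses to $\lambda_{ig^{-1}} = \lambda_i$ for some (equivalently, any) $i \in \supp(\alpha)$; since $\supp(C) = [n]$ this is already implied by the toral condition. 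For the off-diagonal entries $(t_i, e^\alpha)$ (only relevant in the exceptional case of Theorem \ref{frob}), if $|\alpha^c| > 1$ then $\Aut(C)$ fixes both $\alpha$ and its unique support position pointwise, so nothing new is required; whereas if $C = \mathbb{F}_2^2$ the non-trivial element of $\Aut(C) = S_2$ interchanges $\alpha \leftrightarrow \alpha^c$ and $\supp(\alpha) \leftrightarrow \supp(\alpha^c)$, yielding precisely the extra condition $\lambda_{\supp(\alpha),\alpha} = \lambda_{\supp(\alpha^c),\alpha^c}$.

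The main obstacle I expect is careful bookkeeping around the exceptional Frobenius case: it simultaneously forbids the existence of certain Miyamoto involutions and introduces the off-diagonal entries that give rise to the extra $\mathbb{F}_2^2$ condition. Once this case analysis is in hand, both directions of the equivalence assemble from the computations above: the forward direction is immediate by evaluating $G$-invariance on the pairs $(t_i,t_i)$ and (in the $\mathbb{F}_2^2$ case) $(t_{\supp(\alpha)}, e^\alpha)$, and the reverse direction uses the reductions above to propagate these two conditions to all basis pairs.
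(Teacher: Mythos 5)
Your proposal is correct and follows essentially the same route as the paper: check invariance on basis pairs using the (essentially diagonal) form from Theorem \ref{frob}, with the key step being the regularity computation $\lambda_{\alpha g} = \frac{c_{ig,\alpha g}}{a_{ig,\alpha g}}\lambda_{ig} = \frac{c_{i,\alpha}}{a_{i,\alpha}}\lambda_i = \lambda_\alpha$ once the toral condition is assumed. The paper's proof is just terser, leaving the Miyamoto sign-action and the exceptional $C=\mathbb{F}_2^2$ bookkeeping implicit, which you spell out correctly.
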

\begin{proof}
It is clear that the above is necessary for the form to be $G$-invariant, so we must just show that it is sufficient.  Since the structure parameters are regular,
\[
\lambda_{\alpha g} = \frac{c_{i g, \alpha g}}{a_{i g, \alpha g}} \lambda_{i g} = \frac{c_{i, \alpha}}{a_{i, \alpha}} \lambda_i = \lambda_\alpha
\]
for all $\alpha \in C^*$.
\end{proof}

Before we prove our next result we recall the following standard spectral theorem which can be found, for example, in \cite{roman}.

\begin{theorem}\label{spectral}
Let $V$ be a finite-dimensional vector space over $\mathbb{R}$ endowed with a bilinear form which is positive definite and $\phi$ be an endomorphism of $V$ . Then $\phi$ satisfies $( \phi(x), y) = (x, \phi(y))$ for all $x, y \in V$ if and only if $\phi$ is orthogonally diagonalisable.  That is, there is a basis of eigenvectors of $\phi$ with are orthogonal with respect to the form $( \cdot, \cdot)$.
\end{theorem}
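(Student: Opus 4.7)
The plan is to prove this in the standard two-direction manner, since it is a basic linear-algebra statement. The easy direction is to assume $\phi$ is orthogonally diagonalisable: then, writing a vector $v$ in an orthogonal eigenbasis as $v = \sum c_i v_i$ with $\phi(v_i) = \lambda_i v_i$, the identity $(\phi(x), y) = (x, \phi(y))$ follows immediately by $\mathbb{R}$-bilinearity of the form and the orthogonality relation $(v_i, v_j) = 0$ for $i \neq j$, since both sides reduce to $\sum_i \lambda_i c_i d_i (v_i, v_i)$ where $y = \sum d_i v_i$.

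For the harder direction, I would induct on $\dim V$. The base case $\dim V = 1$ is immediate, since any endomorphism acts as a scalar and any non-zero vector is an eigenvector. For the inductive step, the first task is to produce at least one real eigenvalue for $\phi$. To do so I would complexify $V$ to $V_{\mathbb{C}} = V \otimes_\mathbb{R} \mathbb{C}$ and extend $\phi$ and $(\cdot, \cdot)$ sesquilinearly; the characteristic polynomial of $\phi$ has a complex root, giving an eigenvector $v \in V_{\mathbb{C}}$ with eigenvalue $\lambda \in \mathbb{C}$. Then the computation
\begin{equation*}
\lambda (v, \bar v) = (\phi(v), \bar v) = (v, \phi(\bar v)) = (v, \overline{\phi(v)}) = \bar\lambda (v, \bar v),
\end{equation*}
together with positive-definiteness forcing $(v, \bar v) > 0$, shows $\lambda \in \mathbb{R}$. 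Hence $\phi$ has a real eigenvalue and, working over $\mathbb{R}$, a real eigenvector $v_0 \in V$.

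Next I would set $W := v_0^{\perp} = \{w \in V : (w, v_0) = 0\}$. Because the form is positive definite, $V = \langle v_0 \rangle \oplus W$ and $W$ inherits a positive definite bilinear form. The key point is that $W$ is $\phi$-invariant: for $w \in W$,
\begin{equation*}
(\phi(w), v_0) = (w, \phi(v_0)) = \lambda_0 (w, v_0) = 0,
\end{equation*}
so $\phi(w) \in W$. The restriction $\phi|_W$ is again self-adjoint with respect to the restricted form, so by the inductive hypothesis $W$ admits an orthogonal basis of eigenvectors of $\phi|_W$; adjoining $v_0$ gives the required orthogonal eigenbasis of $V$.

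The main obstacle in this plan is the production of a real eigenvalue, since everything else is formal manipulation. I would expect to handle it via the complexification argument above, which is the cleanest route; alternatively one could appeal to the fact that the matrix of $\phi$ in any orthonormal basis is a real symmetric matrix, whose eigenvalues are known to be real, but that essentially repackages the same computation.
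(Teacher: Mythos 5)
The paper does not actually prove this statement: it is recalled as a standard spectral theorem and attributed to the textbook of Roman, so there is no in-paper argument to compare against. Your proof is the standard textbook argument --- the easy direction by bilinearity over an orthogonal eigenbasis, the hard direction by producing a real eigenvalue and inducting on $\dim V$ using the $\phi$-invariant orthogonal complement $v_0^{\perp}$ --- and it is correct in substance, so it supplies exactly what the citation stands in for. Two small points are worth tidying. First, the statement as quoted only assumes a positive definite bilinear form, but your argument (like any proof of this equivalence) also uses symmetry of the form, e.g.\ in concluding $(v,\bar v)>0$ and in getting $V=\langle v_0\rangle\oplus v_0^{\perp}$ with the restricted form positive definite; this is harmless in context, since the Frobenius form to which the theorem is applied is shown to be symmetric earlier in the paper, but it should be stated. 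Second, you announce a sesquilinear extension of the form to $V_{\mathbb{C}}$ but then compute with $(v,\bar v)$ as if the extension were bilinear: either extend bilinearly and keep $(v,\bar v)$, or extend sesquilinearly and work with $(v,v)$ --- both yield $\lambda=\bar\lambda$, but the two conventions should not be mixed in one display.
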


\begin{proposition}
Let $A_C$ be a code algebra over $\mathbb{R}$ with a Frobenius form such that $a_{i, \alpha}$ and $c_{i, \alpha}$ have the same sign for all $i \in [n]$ and $\alpha \in C^*$.  Then, for any $a \in A$, $\mbox{ad}_a$ is orthogonally diagonalisable over $K$.  In particular, any idempotent $e \in A$ is semi-simple.
\end{proposition}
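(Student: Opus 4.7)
The plan is to reduce the claim to the spectral theorem (Theorem \ref{spectral}), which requires two ingredients: a positive-definite bilinear form, and self-adjointness of the endomorphism with respect to it. The second ingredient is immediate from the Frobenius property, since for any $a \in A$ and any $x, y \in A$ we have $(\ad_a(x), y) = (ax, y) = (x, ay) = (x, \ad_a(y))$. So the bulk of the work is in establishing that the given Frobenius form, possibly after a harmless rescaling, is positive definite.

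For positive definiteness I would invoke Theorem \ref{frob}. Since $A_C$ is non-degenerate and the sign hypothesis forces $a_{i,\alpha}, c_{i,\alpha}\neq 0$, we are not in the exceptional case of that theorem (where $C = \{\0, \1, \alpha, \alpha^c\}$ with $|\alpha| = 1$ and $a_{\supp(\alpha),\alpha} = 1$ would produce off-diagonal entries). Hence the form is diagonal in the standard basis with $(t_i, t_i) = \lambda_i$ and $(e^\alpha, e^\alpha) = \frac{c_{i,\alpha}}{a_{i,\alpha}} \lambda_i$ for any $i \in \supp(\alpha)$. The sign hypothesis guarantees $\frac{c_{i,\alpha}}{a_{i,\alpha}} > 0$ for every $i \in \supp(\alpha)$, so all the $e^\alpha$-entries are positive multiples of the corresponding $\lambda_i$. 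Thus I need only argue that the $\lambda_i$ can be chosen to be strictly positive.

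This is where the main (mild) obstacle lies: condition 1 of Theorem \ref{frob} couples the $\lambda_i$ with $i \in \supp(\alpha)$ by $\frac{c_{i,\alpha}}{a_{i,\alpha}} \lambda_i = \frac{c_{j,\alpha}}{a_{j,\alpha}} \lambda_j$, and so forces $\lambda_i$ and $\lambda_j$ to have the same sign whenever $i, j$ co-occur in some codeword support. Define an equivalence relation on $[n]$ generated by this co-occurrence. Within each equivalence class all $\lambda_i$ already have a common sign; rescaling the form separately on each class (which again produces a Frobenius form by Theorem \ref{frob}, as the conditions there are homogeneous in the $\lambda_i$) we can arrange $\lambda_i > 0$ for all $i$. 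Hence the form is positive definite, and by Theorem \ref{spectral}, $\ad_a$ is orthogonally diagonalisable.

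Finally, for the last sentence: if $e$ is an idempotent, then $e \cdot e = e$ shows $e$ is a $1$-eigenvector of $\ad_e$, and orthogonal diagonalisability of $\ad_e$ gives a decomposition of $A$ into an orthogonal direct sum of eigenspaces of $\ad_e$, so $e$ is semi-simple by definition.
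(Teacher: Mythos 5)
Your proposal follows essentially the same route as the paper's proof: by Theorem \ref{frob} the form is diagonal with entries $\lambda_i$ and $\frac{c_{i,\alpha}}{a_{i,\alpha}}\lambda_i$, the sign hypothesis makes the ratios positive, and Theorem \ref{spectral} finishes. In fact you are more careful than the paper, which simply asserts that the diagonal entries are positive: your rescaling of the $\lambda_i$ on each ``co-occurrence'' class is a legitimate and needed step, since every relation in conditions 1 and 2 of Theorem \ref{frob} only involves indices within a single class (for $\alpha+\beta=\gamma$ all three supports lie in the class generated by $\supp(\alpha)\cup\supp(\beta)$), so the rescaled form is again Frobenius. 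The one inaccuracy is your dismissal of the exceptional case: non-degeneracy together with $a_{i,\alpha},c_{i,\alpha}\neq 0$ does \emph{not} exclude it, since $C=\{\0,\1,\alpha,\alpha^c\}$ with $|\alpha|=1$ and $a_{\supp(\alpha),\alpha}=1$ is a perfectly non-degenerate configuration. However, in that case the off-diagonal value $\lambda_{i,\alpha}=(t_i,e^\alpha)$ is an unconstrained parameter, so one may simply take it to be $0$, and the form is again diagonal with positive entries; with that one-line patch your argument goes through. Your verification that $\ad_a$ is self-adjoint for the form and the deduction that idempotents are semi-simple are fine.
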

\begin{proof}
By Theorem \ref{frob}, the Frobenius form is diagonal with positive coefficients on the diagonal.  Hence, it is positive definite and so by Theorem \ref{spectral}, $\ad_{a}$ is orthogonally diagonalisable.
\end{proof}


\section{The $s$ map} \label{sec:smap}

In this section we will describe a way to find other idempotents of the code algebra.  But before we do, we need one more fact about codes in addition to those in Section \ref{sec:background}.

\begin{lemma}
Let $D$ be a binary linear code. The number of ordered ways of obtaining $\alpha \in D$ as the sum $\beta + \gamma$ where $\beta, \gamma \in D$ is $|D|$.
\end{lemma}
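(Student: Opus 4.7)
The plan is to exhibit a bijection between $D$ itself and the set
\[
S_\alpha := \{ (\beta, \gamma) \in D \times D : \beta + \gamma = \alpha \}.
\]
Define a map $\Phi : D \to S_\alpha$ by $\Phi(\beta) := (\beta, \alpha + \beta)$. Since $D$ is a binary linear code and both $\alpha$ and $\beta$ lie in $D$, the sum $\alpha + \beta$ also lies in $D$, so $\Phi$ is well-defined, and it clearly lands in $S_\alpha$ since $\beta + (\alpha + \beta) = \alpha$ (recall we are working in characteristic $2$).

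To see that $\Phi$ is a bijection, note that the first-coordinate projection $\pi_1 : S_\alpha \to D$, $(\beta, \gamma) \mapsto \beta$, is a two-sided inverse: $\pi_1 \circ \Phi = \id_D$ is obvious, and $\Phi \circ \pi_1$ sends $(\beta,\gamma)$ to $(\beta, \alpha+\beta) = (\beta, \gamma)$ since $\beta + \gamma = \alpha$ forces $\gamma = \alpha + \beta$. Hence $|S_\alpha| = |D|$, which is the claim.

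There is no real obstacle here; the only point worth flagging is that linearity of $D$ is exactly what guarantees $\Phi(\beta) \in D \times D$ (i.e.\ that $\alpha + \beta \in D$), and that the argument implicitly uses $\ch(\mathbb{F}_2) = 2$ to identify subtraction with addition. The statement allows $\beta = \gamma$ (giving $\alpha = 0$ as one of the decompositions when $\beta = \gamma$) and the ordered pair $(\alpha, 0)$ and $(0, \alpha)$ as distinct decompositions, which the count of $|D|$ correctly reflects.
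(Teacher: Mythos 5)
Your proof is correct and is essentially the same argument as the paper's: both rest on the observation that, for fixed $\alpha$, the first summand $\beta$ determines the second as $\alpha + \beta$, which lies in $D$ by linearity, giving a bijection with $D$. Your write-up just packages the paper's bijection $\beta \mapsto \alpha + \beta$ as an explicit bijection onto the set of ordered pairs.
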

\begin{proof}
Fix $\alpha \in D$ and consider the map $\beta \mapsto \alpha + \beta$.  This map is a bijection from $D$ to $D$, so for $\gamma \in D$ there is exactly one $\beta$ such that $\alpha + \beta = \gamma$.  Hence, considering all different $\alpha$, we see that there are $|D|$ ways to obtain $\gamma$ as an ordered sum.
\end{proof}

If we limit ourselves to $D^* = D -\{\0, \1\}$, the number of ordered ways of obtaining $\gamma \in D^*$ as an ordered sum is
\[
e := \vert D \vert -2(\vert D \vert  - \vert D^* \vert ) = 2\vert D^* \vert - \vert D \vert 
\]

We define a (sub)code $D$ to be \emph{constant weight} if all non-constant codewords have the same weight, i.e.\ $|wt(D^*)|=1$.  (Note that this is a slightly wider definition than is standard.) Observe that if $D$ is a constant weight subcode of $C$, then the sum of any two vectors in $D^*$ is either in $D^*$ or is $\0$ or $\1$.

We introduce the notation $t_D := \sum_{i \in \supp(D)} t_i$, where $D$ is a subcode of $C$ and recall that $t_\alpha = \sum_{i \in \supp(\alpha)} t_i$.

\begin{proposition}\label{smap}
Suppose that $D$ is a constant weight subcode of $C$ and the structure parameters supported on $D^*$ are constant $(a,b,c)$.  Then, for $v \in \mathbb{F}_{2}^{n}$, there exists an idempotent of the form
\[
s(D,v) := \lambda t_D + \mu \sum_{\alpha \in D^{\ast }}\left( -1\right) ^{\left( v ,\alpha \right) }e^{\alpha },
\]
with $\mu, \lambda \in \mathbb{F}$, if and only if
\[
\lambda = \frac{1-be\mu}{2ad}
\]
and $\mu$ satisfies the equation
\[
\left( b^2 e^2+4a^{2}c \vert D^*\vert\frac{d^{3}}{m} \right) \mu^{2} + 2be(ad-1) \mu + 1-2ad=0
\]
where $d$ is the weight of the codewords in $D^*$ and $m := |\supp(D)|$.
\end{proposition}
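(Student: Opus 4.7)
The plan is to compute $s \cdot s$ where $s := s(D, v)$ directly in the algebra and compare it, coefficient by coefficient, with $s$ itself. Writing $\epsilon_\alpha := (-1)^{(v, \alpha)}$ for brevity, the expansion of $s^2$ splits into three pieces: $\lambda^2 t_D^2$, a cross-term $2\lambda\mu \sum_{\alpha \in D^*} \epsilon_\alpha\, t_D \cdot e^\alpha$, and a codeword square $\mu^2 \sum_{\alpha, \beta \in D^*} \epsilon_\alpha \epsilon_\beta\, e^\alpha \cdot e^\beta$. Each piece is simplified using the defining multiplication: since the $t_i$ are pairwise orthogonal idempotents, $t_D^2 = t_D$; since $D$ has constant weight $d$ and the structure parameters supported on $D^*$ are constant $(a,b,c)$, one has $t_D \cdot e^\alpha = ad\, e^\alpha$ for each $\alpha \in D^*$; and the double sum breaks into the three cases $\alpha=\beta$, $\alpha=\beta^c$, and otherwise, with the last case producing $b\, e^{\alpha+\beta}$ where $\alpha+\beta$ remains in $D^*$ by the constant-weight assumption.

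Next I compare coefficients of $e^\gamma$ for $\gamma \in D^*$. The cross-term contributes $2\lambda\mu a d\, \epsilon_\gamma$, and the codeword-squared sum contributes $\mu^2 b \sum_{(\alpha,\beta)} \epsilon_\alpha \epsilon_\beta$, where the sum runs over ordered pairs $(\alpha, \beta) \in D^* \times D^*$ with $\alpha + \beta = \gamma$. Two observations collapse this: multiplicativity $\epsilon_\alpha \epsilon_\beta = \epsilon_{\alpha+\beta} = \epsilon_\gamma$ pulls the sign outside, and the preceding counting lemma, after removing ordered pairs involving $\0$ or $\1$, shows the number of such pairs equals $e = 2|D^*| - |D|$. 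Equating the resulting $(2\lambda\mu a d + b e \mu^2)\epsilon_\gamma$ with the coefficient $\mu\epsilon_\gamma$ of $e^\gamma$ in $s$ and cancelling yields the linear equation $2ad\lambda + be\mu = 1$, i.e.\ the stated formula for $\lambda$.

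For the toral coefficients, the contributing terms are $\lambda^2 t_D$ and $\mu^2 c \sum_{\alpha \in D^*} t_\alpha = \mu^2 c \sum_{i \in \supp(D)} n_i\, t_i$, where $n_i := |\{\alpha \in D^* : \alpha_i = 1\}|$. A small but essential sublemma is that $n_i$ is independent of $i \in \supp(D)$; this follows from Lemma~\ref{innerC} applied with the $i$th standard basis vector, which forces $|D_1(e_i)| = |D|/2$ whenever $i \in \supp(D)$, so $n_i$ equals $|D|/2$ minus $1$ or $0$ according as $\1 \in D$ or not. A double-count then gives $n_i = |D^*| d / m$. Matching the coefficient of $t_i$ in $s^2$ with that in $s$ now yields $\lambda^2 + c\mu^2 |D^*| d / m = \lambda$.

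Substituting the linear expression for $\lambda$ into this quadratic and clearing denominators is then a routine algebraic manipulation that produces the stated quadratic in $\mu$. Both directions of the ``if and only if'' follow from this single calculation, since each step of the coefficient comparison is reversible. The main obstacle is really only careful bookkeeping---tracking signs, index sets, and the case splits in the triple sum; the only conceptually new ingredient is the observation that a constant-weight linear code is a $1$-design, which is precisely what ensures the toral equations are consistent across all $i \in \supp(D)$.
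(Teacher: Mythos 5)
Your proposal is correct and follows essentially the same route as the paper: expand $s\cdot s$, use the counting lemma to get the factor $e=2|D^*|-|D|$ on the codeword part, use Lemma \ref{innerC}(3) together with the constant-weight hypothesis to reduce $\sum_{\alpha\in D^*}t_\alpha$ to $|D^*|\tfrac{d}{m}\,t_D$, and then compare coefficients to obtain the linear condition in $\lambda$ and, after substitution, the quadratic in $\mu$. Your explicit sublemma that $n_i=|\{\alpha\in D^*:\alpha_i=1\}|$ is independent of $i\in\supp(D)$ is just a spelled-out version of the step the paper cites from Lemma \ref{innerC}, so there is no substantive difference.
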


\begin{remark}
Note that whether an idempotent $s(D,v)$ exists or not, does not depend on $v$, but on the subcode $D$ and the algebra $A$.  However, we can always extend the field of definition for $A$ so that the two above equations for $\lambda$ and $\mu$ have solutions.  In the following, we will often just assume that the field has been taken to be large enough.
\end{remark}

\begin{proof}[Proof of Proposition $\ref{smap}$]
Let $s := s(D,v)$. We begin by multiplying:
\begin{align*}
s\cdot s &= \left( \lambda t_D+\mu \sum_{\alpha \in D^*}( -1) ^{( v ,\alpha ) }e^\alpha \right) \cdot \left( \lambda t_D+\mu \sum_{\alpha \in D^{\ast }}( -1) ^{( v ,\alpha ) }e^\alpha \right) \\
&= \lambda^2 t_D + 2\lambda \mu a\sum_{\alpha \in D^{\ast }}( -1) ^{( v ,\alpha )} \vert \alpha \vert e^\alpha + \mu^2 c \sum_{\alpha \in D^*}t_\alpha \\ 
& \quad +\mu^2 b \sum_{\alpha \in D^*} \sum_{\beta \in D^*, \beta \neq \alpha, \alpha^c} ( -1)^{( v ,\alpha +\beta ) } e^{\alpha +\beta}
\end{align*}

The number of ordered ways of summing two codewords to get a given codeword is $e$.  Also, by part 3 of Lemma \ref{innerC} and since $D$ is constant weight, \allowbreak $\sum_{\alpha \in D^*}t_\alpha = \vert D^* \vert \frac{d}{m} t_D$.  Hence, we get
\[
s\cdot s = \left[ \lambda^2+\mu^2c \vert D^*\vert \frac{d}{m}\right] t_D+ \left[ 2\lambda \mu ad + \mu^{2}be \right] \sum_{\alpha \in D^{\ast }}( -1) ^{( v ,\alpha ) }e^{\alpha }.
\]
So, $s\cdot s=s$ if and only if
\begin{align}
\lambda & = \lambda^2+\mu^2c \vert D^*\vert \frac{d}{m}, \label{lambda} \\
\mu & =  2\lambda \mu ad + \mu^{2}be.  \label{mu}
\end{align}
As we may assume $\mu \neq 0$, equation (\ref{mu}) implies
\begin{align*}
1 & =  2\lambda  ad + \mu be, \\
\lambda  &= \frac{1-be\mu}{2ad}.
\end{align*}
Substituting this value of $\lambda$ in equation (\ref{lambda}) we obtain the required quadratic equation in $\mu$.
\end{proof}

\begin{remark}
Observe that $s(D,v) = s(D,w)$ if and only if $(v, \alpha) = (w, \alpha)$, for all $\alpha \in D^*$. That is, if $v$ and $w$ lie in the same coset of $D^\perp$.  Hence, if $D$ is a subcode for which $s$-map idempotents exist, for a choice of root $\mu$, the idempotents $s(D,V)$ are in bijection with the cosets of $D^\perp$.
\end{remark}
\begin{remark}
Using the previously introduced notation, $D_i := \{ \alpha \in D : (\alpha, v) = i \}$, we can rewrite the idempotent coming from the $s$-map as
\[
s(D,v) = \lambda t_D + \mu \big( \sum_{\alpha \in D_0^*}e^{\alpha } - \sum_{\beta \in D_1^*} e^\beta\big).
\]
\end{remark}



\subsection{Small idempotents}\label{sec:smallidem}

While for any given specific choice of code $C$ and subcode $D \leq C$, we may calculate the eigenvalues and eigenvectors for the idempotent $s(D,v)$, and hence its fusion law, it is difficult to give a general result.  However,  for some subcodes $D$ this is possible.  In particular, we always have the subcode $D = \langle \alpha \rangle$ for some $\alpha \in C^*$ and so, provided our field is large enough, we get an idempotent $s(D, v)$. We call this a \emph{small idempotent}. Moreover, we show that under some additional conditions on the code, the algebra is generated by such small idempotents, making $A$ an axial algebra.

Let $D := \{ \bf 0, \alpha \}$ for some $\alpha \in C^*$. We assume that the structure parameters supported on $D$ do not depend on the toral or codeword elements. Note that this just means that $a_\alpha := a_{i, \alpha} = a_{j, \alpha}$ and $c_\alpha := c_{i, \alpha} = c_{j, \alpha}$ for $i \in \supp(\alpha)$.  Now, $e = 2|D^*| - |D| = 0$, $d=m = |\alpha|$ and so the equations in Proposition \ref{smap} are dramatically simplified. In particular, we find that
\[
\lambda = \frac{1}{2a_\alpha|\alpha|} \quad \mbox{and} \quad \mu^2 = \frac{\lambda - \lambda^2}{c_\alpha}
\]
Provided the field is large enough, we get two idempotents $e_\pm := \lambda t_\alpha \pm \mu e^\alpha$.  If the field is $\mathbb{R}$, then the discriminant in the quadratic for $\mu$ implies we have solutions if $a_\alpha c_\alpha > \frac{c_\alpha}{2|\alpha|}$. (Note that we write the inequality in such a way to deal with the two cases $c_\alpha>0$ and $c_\alpha <0$ simultaneously.)  Note that if $a_\alpha = \frac{1}{2|\alpha|}$, then $\lambda =1$, $\mu=0$ and so $e_\pm = t_\alpha$.  We wish to rule out this degenerate case, so we will assume that $a_\alpha \neq \frac{1}{2|\alpha|}$ for the remainder of this section.

We get the following corollary to Proposition \ref{smap}.

\begin{corollary}
Let $A_C$ be a non-degenerate code algebra.  Suppose that $S$ is a generating set for the code $C$ such for all $\alpha \in S$ the small idempotents corresponding to $\alpha$ exist.  Then $A$ is generated by the idempotents $t_i$, for $i \in [n]$, and the small idempotents corresponding to $S$.
\end{corollary}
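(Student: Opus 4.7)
The plan is to show that the subalgebra $B$ generated by $\{t_i : i \in [n]\}$ together with the small idempotents at each $\alpha \in S$ contains every basis element of $A_C$. I would proceed in two stages: first recover the codeword elements $e^\alpha$ for $\alpha \in S$ from the given generators, then bootstrap to all of $C^*$ using that $S$ generates $C$ as an $\mathbb{F}_2$-vector space together with the codeword-product rule $e^\alpha\cdot e^\beta = b_{\alpha,\beta}e^{\alpha+\beta}$.

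\textbf{Step 1 (Recovering generators of $C$).} For each $\alpha \in S$, the small idempotent has the form $e_\pm = \lambda t_\alpha \pm \mu e^\alpha$ with $\mu \neq 0$. Since $t_\alpha = \sum_{i\in\supp(\alpha)}t_i$ already lies in $B$, the element $\frac{1}{\pm\mu}(e_\pm - \lambda t_\alpha) = e^\alpha$ is in $B$. So $e^\alpha \in B$ for every $\alpha \in S$.

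\textbf{Step 2 (Inductive step on representation length).} For any $\gamma \in C^*$, let $k(\gamma)$ denote the minimum number of elements of $S$ needed to express $\gamma$ as a sum; since $S$ generates $C$, this is finite, and since $\alpha+\alpha = \0$ over $\mathbb{F}_2$, the summands can be taken distinct. I will prove $e^\gamma \in B$ by strong induction on $k(\gamma)$. The case $k(\gamma) = 1$ is Step~1. For $k(\gamma) \geq 2$, write $\gamma = \alpha_1 + \dots + \alpha_k$ with distinct $\alpha_i \in S$. I want to pick some index $i$ such that $\gamma' := \gamma + \alpha_i = \sum_{j \neq i}\alpha_j$ lies in $C^*$, i.e.\ is neither $\0$ nor $\1$. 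Observe that $\gamma' = \0$ forces $\gamma = \alpha_i \in S$, contradicting $k(\gamma) \geq 2$; while $\gamma' = \1$ forces $\alpha_i = \gamma^c$, which can hold for at most one index $i$ since the $\alpha_j$ are distinct. Hence for $k(\gamma) \geq 2$ a good index $i$ exists. By the inductive hypothesis applied to $\gamma'$ (expressible with $k-1$ summands from $S$), $e^{\gamma'} \in B$. Finally, $\gamma' \neq \alpha_i$ (else $\gamma = \0$) and $\gamma' \neq \alpha_i^c$ (else $\gamma = \1$), so the multiplication rule gives $e^{\gamma'}\cdot e^{\alpha_i} = b_{\gamma',\alpha_i}\, e^\gamma$ with $b_{\gamma',\alpha_i}\neq 0$ by non-degeneracy, whence $e^\gamma \in B$.

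Combining Steps 1 and 2 shows that $B$ contains every basis element of $A_C$, so $B = A_C$. I expect the main obstacle to be the bookkeeping in Step~2 around the codewords $\0$ and $\1$, which are excluded from $C^*$; the key observation is that the distinctness of the $\alpha_i$'s rules out the pathological case where every choice of index yields $\gamma' = \1$. Apart from this combinatorial care, the argument rests only on non-degeneracy (ensuring $b_{\alpha,\beta} \neq 0$ and $\mu \neq 0$) and on the multiplication rule in Definition~\ref{CodeAlgebra}.
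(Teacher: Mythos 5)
Your argument is correct and is essentially the proof the paper leaves implicit (the corollary is stated there without proof): recover $e^\alpha$ for each $\alpha\in S$ by subtracting $\lambda t_\alpha$ from a small idempotent, then obtain every $e^\gamma$, $\gamma\in C^*$, by multiplying codeword elements, with your induction on the length of an expression over $S$ carefully avoiding the excluded words $\0$ and $\1$. One small correction: $\mu\neq 0$ does not follow from non-degeneracy alone but from the section's standing assumption $a_\alpha\neq\frac{1}{2|\alpha|}$ (if $a_\alpha=\frac{1}{2|\alpha|}$ then $\mu=0$ and $e_\pm=t_\alpha$, and the conclusion would fail), so attribute that step to this assumption rather than to non-degeneracy.
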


Note that our main theorem below is for a constant weight code.  The more complicated case of an arbitrary code (not constant weight), will be dealt with in an upcoming paper \cite{codealg2}.

\begin{theorem}\label{smallidem}
Let $A_C$ be a non-degenerate code algebra on a constant weight code $C$ such that
\begin{align*}
a  &:= a_{i, \beta} && \mbox{for all } i \in \supp(\beta), \beta \in C^*\\
b_{\alpha, \beta} &= b_{\alpha, \gamma} && \mbox{for all } \beta,\gamma \in C^* -\{ \alpha, \alpha^c\} \\
b_{\alpha^c, \beta} &= b_{\alpha^c, \gamma} &&\mbox{for all } \beta,\gamma \in C^* -\{ \alpha, \alpha^c\} \\
c_\beta &:= c_{i, \beta} && \mbox{for all } i \in \supp(\beta), \beta \in C^*
\end{align*}
Suppose that for $\alpha \in C^*$, the small idempotents $e_\pm$ exist and $a \neq \frac{1}{2|\alpha|}, \frac{1}{3|\alpha|}$.  Then, $e_\pm$ are primitive axes for the fusion law given in Table $\ref{tab:small}$.

\begin{table}[!htb]
\setlength{\tabcolsep}{4pt}
\renewcommand{\arraystretch}{1.5}
\centering
\begin{tabular}{c||c|c|c|c|c|c}
 & $1$ & $0$ & $\lambda$ & $\lambda -\frac{1}{2}$ & $\nu_+$ & $\nu_-$ \\ \hline \hline
$1$ & $1$ &  & $\lambda$ & $\lambda -\frac{1}{2}$ & $\nu_+$ & $\nu_-$ \\ \hline
$0$ &  & $0$ &  &  & $\nu_+$ & $\nu_-$ \\ \hline
$\lambda$ & $\lambda$ &  & $1, \lambda, \lambda -\frac{1}{2}$ &  & $\nu_-$ & $\nu_+$ \\ \hline
$\lambda -\frac{1}{2}$ & $\lambda -\frac{1}{2}$&  &  & $1, \lambda -\frac{1}{2}$ & $\nu_+$ & $\nu_-$ \\ \hline
$\nu_+$ & $\nu_+$ & $\nu_+$  & $\nu_-$ & $\nu_+$ & $1,0,\lambda, \lambda -\frac{1}{2}, \nu_+ , \nu_-$ & $0, \lambda $ \\ \hline
$\nu_-$ & $\nu_-$ & $\nu_-$  & $\nu_+$ & $\nu_-$ & $0, \lambda $ & $1,0,\lambda, \lambda -\frac{1}{2}, \nu_+, \nu_-$
\end{tabular}
where $\nu_\pm := \frac{1}{4} \pm \mu b$.
\caption{Fusion table for small idempotents}\label{tab:small}
\end{table}
\end{theorem}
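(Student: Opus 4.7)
My strategy is to first exhibit a full eigenvalue decomposition of $\ad_{e_+}$, then verify Table~\ref{tab:small} entry-by-entry; the case of $e_-$ follows by replacing $\mu$ with $-\mu$. The standard basis of $A_C$ breaks into four $\ad_{e_+}$-invariant blocks: (i) $\langle t_k : k \notin \supp(\alpha)\rangle$ together with $\langle e^{\alpha^c}\rangle$ (if $\alpha^c \in C$), on which $\ad_{e_+}$ acts as zero and which contributes to $A_0$; (ii) the two-dimensional plane $\langle t_\alpha, e^\alpha\rangle$; (iii) the $(|\alpha|-1)$-dimensional hyperplane $\{\sum_{i \in \supp(\alpha)} c_i t_i : \sum_i c_i = 0\}$ inside $\langle t_i : i \in \supp(\alpha)\rangle$, on which $\ad_{e_+}$ acts as $\lambda \cdot \id$; and (iv) for each orbit $\{\beta, \alpha+\beta\}$ in $C^* - \{\alpha, \alpha^c\}$, the block $\langle e^\beta, e^{\alpha+\beta}\rangle$. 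On block (ii), the identities $a|\alpha| = \frac{1}{2\lambda}$ and $\mu^2 c = \lambda(1-\lambda)$ make the characteristic polynomial of $\ad_{e_+}$ factor as $(\kappa - 1)(\kappa - (\lambda - \frac{1}{2}))$, with the $(\lambda - \frac{1}{2})$-eigenvector $f_+ := 2\lambda(1 - \lambda) t_\alpha - \mu e^\alpha$. On block (iv), the constant-weight hypothesis gives $|\alpha \cap \beta| = |\alpha|/2$ (hence $t_\alpha \cdot e^\beta = \frac{1}{4\lambda} e^\beta$), and the constancy of $b_{\alpha, \cdot}$ makes $\ad_{e_+}$ act via $\begin{pmatrix} 1/4 & \mu b \\ \mu b & 1/4\end{pmatrix}$, giving eigenvalues $\nu_\pm$ and eigenvectors $e^\beta \pm e^{\alpha+\beta}$. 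A dimension count confirms semisimplicity, and $A_1 = \langle e_+\rangle$ gives primitivity.

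For the fusion law, the entries in the first four rows and columns are routine. Most follow from the observation that $A_0$ is a subalgebra of $A_C$ and that $(t_i - t_j) \cdot x = 0$ for any $x \in A_0$. The key entry $A_\lambda \cdot A_\lambda \subseteq A_1 \oplus A_\lambda \oplus A_{\lambda - \frac{1}{2}}$ follows since $(t_i - t_j)(t_k - t_l) \in \langle t_i : i \in \supp(\alpha)\rangle = \mathbb{F} t_\alpha \oplus A_\lambda$ and $t_\alpha = \frac{1}{\lambda(3 - 2\lambda)}(e_+ + f_+) \in A_1 \oplus A_{\lambda - \frac{1}{2}}$; here the hypothesis $a \neq \frac{1}{3|\alpha|}$ is exactly what guarantees $\lambda \neq \frac{3}{2}$. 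The mixed entries involving $A_{\nu_\pm}$ are handled by two identities: the parity identity $(\alpha+\beta)_i = 1 - \beta_i$ for $i \in \supp(\alpha)$ gives $(t_i - t_j)(e^\beta \pm e^{\alpha+\beta}) = a(\beta_i - \beta_j)(e^\beta \mp e^{\alpha+\beta})$, producing the $\lambda \cdot \nu_\pm \subseteq \nu_\mp$ swap; and the constancy of $b_{\alpha,\cdot}$ yields $e^\alpha \cdot (e^\beta \pm e^{\alpha+\beta}) = \pm b(e^\beta \pm e^{\alpha+\beta})$, so $f_+$ acts as a scalar on each of $A_{\nu_\pm}$. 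The analogous computation using $b_{\alpha^c,\cdot}$ handles $e^{\alpha^c} \cdot A_{\nu_\pm}$.

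The main obstacle is the three bottom-right entries $A_{\nu_\pm} \cdot A_{\nu_\pm}$ and $A_{\nu_+} \cdot A_{\nu_-}$. Here I would classify products $(e^\beta + \epsilon\, e^{\alpha+\beta})(e^\gamma + \delta\, e^{\alpha+\gamma})$ with $\epsilon, \delta \in \{+1, -1\}$ according to the relationship between the two orbit-pairs $\{\beta, \alpha+\beta\}$ and $\{\gamma, \alpha+\gamma\}$, producing three scenarios. When the pairs coincide, the product collapses to a combination of $t_\beta, t_{\alpha+\beta}$ and $e^\alpha$, which by the constant-weight identity $t_\beta + t_{\alpha+\beta} = t_\alpha + 2 t_{\supp(\beta) \setminus \supp(\alpha)}$ decomposes into components in $A_1, A_0, A_\lambda$ and $A_{\lambda - \frac{1}{2}}$. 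When the pairs are complementary (so $\gamma \in \{\beta^c, \alpha + \beta^c\}$, only possible when $\1 \in C$), almost every term vanishes via $e^x \cdot e^{x^c} = 0$ and what remains is a scalar multiple of $e^{\alpha^c} \in A_0$. In the generic case, the product is a combination of $e^{\beta+\gamma}$ and $e^{\alpha+\beta+\gamma}$, which regroups cleanly as a linear combination of the eigenvectors attached to the orbit $\{\beta+\gamma, \alpha+\beta+\gamma\}$, landing in $A_{\nu_+} \oplus A_{\nu_-}$. The careful sign- and case-bookkeeping across $\epsilon, \delta$ and these three scenarios is the technical heart of the argument. Throughout, the exclusion $a \neq \frac{1}{2|\alpha|}$ ensures $\mu \neq 0$ and $\lambda \neq 1$, so the eigenspaces remain genuinely distinct.
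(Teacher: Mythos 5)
Your overall strategy coincides with the paper's: your blocks (i)--(iv) are exactly the eigenvectors of Lemmas \ref{eig} and \ref{pairedeig} (your $f_+$ is the paper's $2\mu c_\alpha t_\alpha - e^\alpha$ rescaled by $\mu$), you use the same constant-weight fact $|\supp(\alpha)\cap\supp(\beta)|=\tfrac{|\alpha|}{2}$, the same dimension count for semisimplicity and primitivity, and the same interpretation of the exclusions $a\neq\tfrac{1}{2|\alpha|},\tfrac{1}{3|\alpha|}$ (your identity $t_\alpha=\tfrac{1}{\lambda(3-2\lambda)}(e_++f_+)$ is a nice way to see the latter). The fusion check is then the same case-by-case computation the paper compresses into one sample calculation.

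There is, however, one concrete point that your outline does not settle, and it sits exactly inside the ``sign- and case-bookkeeping'' you defer: the entry $\nu_+\star\nu_-=\{0,\lambda\}$. For two distinct, non-complementary pairs your generic case gives $(e^\beta+e^{\alpha+\beta})(e^\gamma-e^{\alpha+\gamma})=(b_{\beta,\gamma}-b_{\alpha+\beta,\alpha+\gamma})e^{\beta+\gamma}+(b_{\alpha+\beta,\gamma}-b_{\beta,\alpha+\gamma})e^{\alpha+\beta+\gamma}$, which by your own description lies in $A_{\nu_+}\oplus A_{\nu_-}$; that is fine for the $\nu_\pm\star\nu_\pm$ entries, but it is \emph{not} contained in $A_0\oplus A_\lambda$ unless both coefficients vanish, and the stated hypotheses only force $b_{\alpha,\cdot}$ and $b_{\alpha^c,\cdot}$ to be constant. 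Similarly, for coinciding pairs the mixed product is $(e^\beta+e^{\alpha+\beta})(e^\beta-e^{\alpha+\beta})=c_\beta t_\beta-c_{\alpha+\beta}t_{\alpha+\beta}$, whose component along $t_\alpha$ (which lies in $A_1\oplus A_{\lambda-\frac12}$, not in $A_0\oplus A_\lambda$) vanishes, by the half-intersection property, precisely when $c_\beta=c_{\alpha+\beta}$ --- again not something your hypotheses-bookkeeping addresses. These cancellations do hold for constant parameters $(a,b,c)$, and the paper's own proof only writes out the $\nu_+\star\nu_+$ case, so the gap is inherited rather than invented; but as a standalone argument you must either prove these identities or flag the extra conditions needed. (A genuinely ``routine'' pass through the first four rows would also reveal $(t_i-t_j)\cdot f_+=2\lambda(1-\lambda)(t_i-t_j)\neq 0$, so the $\lambda\star(\lambda-\tfrac12)$ cell is $\lambda$ rather than empty --- another spot where the deferred checking carries real content.)
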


\begin{remark}
We note that generically, i.e.\ when the eigenvalues do not coincide, the fusion table given in Table \ref{tab:small} satisfies the Seress condition.
\end{remark}

The proof of Theorem \ref{smallidem} will proceed via a series of lemmas.  Throughout the remainder of this section,  let $e = \lambda t_\alpha + \mu e^\alpha$ for some $\alpha \in C^*$, where $\lambda = \frac{1}{2a_\alpha|\alpha|}$ and $\mu^2 = \frac{\lambda - \lambda^2}{c_\alpha}$.  We will list all the eigenvectors and eigenvalues for $e$; we begin by writing down some obvious ones.

\begin{lemma}\label{eig}
For an arbitrary code $C$, we have the following eigenvectors for $e$.
\begin{enumerate}
\item[$1.$] Eigenvalue $0$\textup{:}
\begin{enumerate}
\item[\textup{i.}] $t_i$ \qquad such that $i \notin \supp(\alpha)$,
\item[\textup{ii.}] $e^{\alpha^c}$\qquad provided ${\bf 1 } \in C$.
\end{enumerate}
\item[$2.$] Eigenvalue $\lambda$\textup{:}
\[
t_i - t_j \qquad \mbox{where } i, j \in \supp(\alpha).
\]
\item[$3.$] Eigenvalue $\lambda -\frac{1}{2}$\textup{:}
\[
2\mu c_\alpha \, t_\alpha - e^\alpha.
\]
\end{enumerate}
\end{lemma}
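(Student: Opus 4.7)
The plan is to verify each item by direct computation, multiplying $e = \lambda t_\alpha + \mu e^\alpha$ against the given vector and using the multiplication rules of the code algebra, along with the defining relations $\lambda = \frac{1}{2a_\alpha |\alpha|}$ and $\mu^2 = \frac{\lambda - \lambda^2}{c_\alpha}$ from the $s$-map construction. All three items reduce to short computations, so the work is really just organising the cases cleanly and invoking the right identities at the end.

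For item 1(i), if $i \notin \supp(\alpha)$ then every $t_j$ in $t_\alpha$ satisfies $j \neq i$, so $t_\alpha \cdot t_i = 0$; and since $\alpha_i = 0$, we have $e^\alpha \cdot t_i = 0$. Thus $e \cdot t_i = 0$. For item 1(ii), every $j \in \supp(\alpha)$ has $(\alpha^c)_j = 0$, so $t_\alpha \cdot e^{\alpha^c} = 0$, while $e^\alpha \cdot e^{\alpha^c} = 0$ by the third branch of the definition of the product of two codeword elements. For item 2, $t_\alpha \cdot (t_i - t_j) = t_i - t_j$ when $i,j \in \supp(\alpha)$, while $e^\alpha \cdot (t_i - t_j) = a_\alpha e^\alpha - a_\alpha e^\alpha = 0$ (here we use the hypothesis that $a$ does not depend on the toral index in $\supp(\alpha)$), giving $e \cdot (t_i - t_j) = \lambda(t_i - t_j)$.

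The only item requiring any real bookkeeping is item 3. I would expand
\[
e \cdot (2\mu c_\alpha t_\alpha - e^\alpha)
\]
using $t_\alpha \cdot t_\alpha = t_\alpha$, $t_\alpha \cdot e^\alpha = a_\alpha |\alpha|\, e^\alpha$, and $e^\alpha \cdot e^\alpha = c_\alpha t_\alpha$. Collecting coefficients, the coefficient of $t_\alpha$ becomes $(2\lambda - 1)\mu c_\alpha$, which is exactly $(\lambda - \tfrac12)$ times the $t_\alpha$-coefficient of the test vector; and the coefficient of $e^\alpha$ becomes $2\mu^2 c_\alpha a_\alpha |\alpha| - \tfrac12$. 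The step that closes the argument is to show this equals $-(\lambda-\tfrac12)$: using $2a_\alpha|\alpha| = 1/\lambda$ and $\mu^2 c_\alpha = \lambda(1-\lambda)$, we get $2\mu^2 c_\alpha a_\alpha|\alpha| = (1-\lambda) \cdot 2\lambda a_\alpha|\alpha| = 1-\lambda$, so the coefficient of $e^\alpha$ is $\tfrac12 - \lambda$, as required.

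No step is really an obstacle; the mild nuisance is the bookkeeping in item 3, and in particular remembering that both defining equations for $\lambda$ and $\mu$ are needed precisely once to reduce $2\mu^2 c_\alpha a_\alpha|\alpha|$ to $1-\lambda$. I would keep the proof short and verbatim, since these three eigenvectors will later be combined with the remaining ones (to be produced in subsequent lemmas) to yield the full eigenspace decomposition used in Theorem \ref{smallidem}.
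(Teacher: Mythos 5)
Your proposal is correct and is exactly the direct verification the paper has in mind (its proof of Lemma \ref{eig} simply says "These are straightforward calculations"): items 1 and 2 follow immediately from the multiplication rules together with the standing assumption that $a_{i,\alpha}$ is independent of $i\in\supp(\alpha)$, and your item 3 bookkeeping, using $2\lambda a_\alpha|\alpha|=1$ and $\mu^2 c_\alpha=\lambda-\lambda^2$ to reduce the $e^\alpha$-coefficient to $\tfrac12-\lambda$, checks out.
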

\begin{proof}
These are straightforward calculations.
\end{proof}

%

\begin{lemma}\label{pairedeig}
Suppose $C$ is constant weight and let $\beta \in C^*$ such that $\beta \neq \alpha, \alpha^c$, $a_\beta := a_{i, \beta}$ for all $i \in \supp(\beta)$, $a_\beta = a_{\alpha+\beta}$, $b_{\alpha, \beta} = b_{\alpha, \alpha + \beta}$.  Then,
\begin{itemize}
\item[$1.$] $e^\beta - e^{\alpha + \beta}$ is a $(\frac{a_\beta}{4a_\alpha} - \mu b_{\alpha, \beta})$-eigenvector for $e$
\item[$2.$] $e^\beta + e^{\alpha + \beta}$ is a $(\frac{a_\beta}{4a_\alpha} + \mu b_{\alpha, \beta})$-eigenvector for $e$
\end{itemize}
\end{lemma}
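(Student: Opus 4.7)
The plan is a direct computation: expand $e\cdot e^\beta$ and $e\cdot e^{\alpha+\beta}$ in the basis, exploit the constant weight hypothesis and the symmetry assumptions on the structure parameters, and then diagonalise the resulting $2\times 2$ action on $\langle e^\beta, e^{\alpha+\beta}\rangle$ by taking the sum and difference.

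First I would compute $t_\alpha \cdot e^\beta$. Since $t_i\cdot e^\beta = a_{i,\beta}e^\beta$ exactly when $i\in\supp(\beta)$, and by hypothesis $a_{i,\beta}=a_\beta$ is independent of $i$, we get $t_\alpha\cdot e^\beta = a_\beta\,|\supp(\alpha)\cap\supp(\beta)|\,e^\beta$. The key input is that $C$ is constant weight: writing $d$ for the common weight, $|\alpha+\beta|=d$ forces $|\alpha|+|\beta|-2|\supp(\alpha)\cap\supp(\beta)|=d$, so $|\supp(\alpha)\cap\supp(\beta)|=d/2=|\alpha|/2$. Combined with $\lambda=\frac{1}{2a_\alpha|\alpha|}$, this yields
\[
\lambda\,t_\alpha\cdot e^\beta = \frac{a_\beta}{4a_\alpha}\,e^\beta.
\]
Next, since $\beta\neq\alpha,\alpha^c$, we have $e^\alpha\cdot e^\beta = b_{\alpha,\beta}\,e^{\alpha+\beta}$. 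Putting these together gives
\[
e\cdot e^\beta \;=\; \frac{a_\beta}{4a_\alpha}\,e^\beta \;+\; \mu b_{\alpha,\beta}\,e^{\alpha+\beta}.
\]

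Next I would do the analogous computation for $e\cdot e^{\alpha+\beta}$. The same intersection count applies to $\alpha$ and $\alpha+\beta$ (again by constant weight), so $\lambda\,t_\alpha\cdot e^{\alpha+\beta} = \frac{a_{\alpha+\beta}}{4a_\alpha}e^{\alpha+\beta}$. Using the hypothesis $a_{\alpha+\beta}=a_\beta$ this equals $\frac{a_\beta}{4a_\alpha}e^{\alpha+\beta}$. Then $e^\alpha\cdot e^{\alpha+\beta}=b_{\alpha,\alpha+\beta}\,e^{\beta}$ (noting $\alpha+(\alpha+\beta)=\beta$, and $\alpha+\beta\neq\alpha,\alpha^c$ since $\beta\neq\0,\1$), and by the hypothesis $b_{\alpha,\beta}=b_{\alpha,\alpha+\beta}$ this becomes $b_{\alpha,\beta}e^\beta$. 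Hence
\[
e\cdot e^{\alpha+\beta} \;=\; \frac{a_\beta}{4a_\alpha}\,e^{\alpha+\beta} \;+\; \mu b_{\alpha,\beta}\,e^{\beta}.
\]

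Finally, adding and subtracting these two identities gives
\[
e\cdot(e^\beta\pm e^{\alpha+\beta}) \;=\; \Bigl(\tfrac{a_\beta}{4a_\alpha}\pm \mu b_{\alpha,\beta}\Bigr)(e^\beta\pm e^{\alpha+\beta}),
\]
which is exactly the claim. There is no real obstacle here: the only point that needs care is the constant weight identity $|\supp(\alpha)\cap\supp(\beta)|=|\alpha|/2$, which is what forces the coefficient of $e^\beta$ in $\lambda\,t_\alpha\cdot e^\beta$ to match that of $e^{\alpha+\beta}$ in $\lambda\,t_\alpha\cdot e^{\alpha+\beta}$ and so allows the $2\times 2$ matrix to diagonalise via the symmetric/antisymmetric basis.
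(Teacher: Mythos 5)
Your proposal is correct and follows essentially the same route as the paper: a direct expansion using the constant-weight identity $|\supp(\alpha)\cap\supp(\beta)|=|\alpha|/2$ together with the hypotheses $a_{\alpha+\beta}=a_\beta$ and $b_{\alpha,\beta}=b_{\alpha,\alpha+\beta}$. The only difference is cosmetic: you compute the action of $e$ on $e^\beta$ and $e^{\alpha+\beta}$ separately and then take sums and differences, whereas the paper multiplies $e$ directly against $e^\beta - e^{\alpha+\beta}$ and matches the two coefficients.
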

\begin{proof}
We prove the first claim, the second is similar.
\begin{align*}
(\lambda t_\alpha + \mu e^\alpha)(e^\beta - e^{\alpha+\beta}) &= (\lambda a_\beta |\supp(\alpha) \cap \supp(\beta)| - \mu b_{\alpha, \alpha+\beta}) e^\beta \\
& \qquad - (\lambda a_{\alpha+\beta} |\supp(\alpha) \cap \supp(\alpha+\beta)| - \mu b_{\alpha, \beta})e^{\alpha+\beta}
\end{align*}
Since $C$ is constant weight and $|\alpha+\beta| = |\alpha| + |\beta| - 2 | \supp(\alpha) \cap \supp(\beta)|$, it is clear that all distinct codewords in $C^*$ which are not complements intersect in a set of size $\frac{|\alpha|}{2}$.  Using this and our assumptions on the structure parameters, we see that the two coefficients are both $\frac{a_\beta}{4a_\alpha} - \mu b_{\alpha,\beta}$.
\end{proof}

\begin{proof}[Proof of Theorem $\ref{smallidem}$]
The only two eigenvectors which could be scalar multiples of one another are $e = \lambda t_\alpha + \mu e^\alpha$ and $2\mu c \, t_\alpha - e^\alpha$.  This happens if and only if
\begin{align*}
\lambda &= -2\mu^2 c \\
 &= -2(\lambda - \lambda^2) \\
0 &= \lambda(2\lambda -3)
\end{align*}
which is equivalent to $a= \frac{1}{3|\alpha|}$.  We note that this holds precisely when the $\lambda -\frac{1}{2}$-eigenspace coincides with the $1$-eigenspace.

Provided $a \neq \frac{1}{3|\alpha|}$, it is easy to see all the eigenvectors listed are linearly independent.  If ${\bf 1 } \in C$, we have listed $1+(\frac{n}{2}+1)+ (\frac{n}{2} -1) + 1+ \frac{|C^*| -2}{2} + \frac{|C^*| -2}{2} = n + |C^*|$ elements and if ${\bf 1 } \notin C$, we have listed $1+\frac{n}{2}+ (\frac{n}{2} -1) + 1+ \frac{|C^*| -1}{2} + \frac{|C^*| -1}{2} = n + |C^*|$ elements.  In both cases this is $n + |C^*|$ which is the dimension of the algebra.

It remains to calculate the fusion table for the small idempotents.  This is somewhat long, but easy calculation. As an example, we provide one such calculation here for $\nu_+\star \nu_+$:
\begin{align*}
(e^\beta + e^{\alpha+\beta})^2 &= c_\beta t_\beta + c_{\alpha+\beta} t_{\alpha+\beta} + 2b_{\alpha, \alpha+\beta} e^\alpha \\
(e^\beta + e^{\alpha+\beta})(e^\gamma + e^{\alpha+\gamma}) &= (b_{\beta,\gamma} + b_{\alpha+\beta, \alpha+\gamma})e^{\beta+\gamma} + (b_{\alpha+\beta,\gamma} + b_{\alpha, \alpha+\gamma})e^{\alpha+\beta+\gamma}
\end{align*}
the first of these is contained in $A_1 \oplus A_0 \oplus A_\lambda \oplus A_{\lambda - \frac{1}{2}}$, whilst the second is contained in $A_{\nu_+} \oplus A_{\nu_-}$.  (Note that if $b_{\beta, \gamma} = b_{\alpha + \beta, \gamma}$ for all $\beta, \gamma \in C^* -\{ \alpha, \alpha^c\}$, then the second expression above is in fact contained in $\nu_+$).
\end{proof}

For $A$ to be an axial algebra with axes equal to the small idempotents, we need some additional conditions on the code $C$.  We define the $m$-fold juxtaposition of an $[n,k,d]$-code $C$ with generating matrix $G$ to be the $[mn,k,md]$-code with generating matrix given by $m$ copies of $G$:
\[
\left( G | \dots | G \right ).
\]
This is independent of the choice of generating matrix.

\begin{lemma}
Let $C$ be a constant weight code.
\begin{enumerate}
\item[$1.$] If ${\bf 1 } \notin C$, then $C$ is equivalent to the $m$-fold juxtaposition of a simplex code \emph{(}i.e.\ a dual of a Hamming code\emph{)} with parameters $[2^r-1,r,2^{r-1}]$.
\item[$2.$] If ${ \bf 1 } \in C$, then $C$ is equivalent to the $m$-fold juxtaposition of a first order Reed-Muller code with parameters $[2^r, r+1, 2^{r-1}]$.
\end{enumerate}
Moreover, there is a bijection between the first and second types above.
\end{lemma}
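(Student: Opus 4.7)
The plan is to follow Bonisoli's Walsh--Hadamard argument for equidistant linear codes uniformly in both cases, and then identify the affine hyperplane of columns appearing in case~$2$ as the evaluation domain for a first-order Reed--Muller code.

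Fix a generator matrix $G$ for $C$, set $k = \dim C$, and delete any zero columns (this gives an equivalent code). Writing $n_v$ for the multiplicity of $v \in \mathbb{F}_2^k$ as a column of $G$, we have $\mathrm{wt}(xG) = \sum_{v : (x,v)=1} n_v$, and the Walsh--Hadamard transform
\[
\widehat{n}(y) := \sum_v n_v (-1)^{(y,v)}
\]
satisfies $\widehat{n}(0) = n$ and $\widehat{n}(y) = n - 2\,\mathrm{wt}(yG)$ for every non-zero $y$.

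For case $1$, the constant-weight hypothesis gives $\widehat{n}(y) = n - 2d$ for all $y \neq 0$; Fourier inversion combined with $n_0 = 0$ then forces $n_v = d/2^{k-1}$ for every $v \neq 0$. Integrality yields $d = m \cdot 2^{k-1}$ and $n = m(2^k - 1)$, so (up to column permutation) $C$ is the $m$-fold juxtaposition of the simplex code $S(k)$, proving $(1)$ with $r = k$. For case $2$, there is a unique $x_0 \neq 0$ with $x_0 G = \1$, and applying the hypothesis to $\alpha$ and $\alpha^c = \1 + \alpha$ forces $n = 2d$; hence $\widehat{n}(x_0) = -2d$ and $\widehat{n}(y) = 0$ for every other non-zero $y$, and Fourier inversion gives $n_v = m := d/2^{k-2}$ precisely when $(x_0, v) = 1$, and $n_v = 0$ otherwise. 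Every column of $G$ therefore lies in the affine hyperplane $H = \{v \in \mathbb{F}_2^k : (x_0, v) = 1\}$ with constant multiplicity $m$.

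Setting $r = k - 1$ and picking any $v_0 \in H$, the map $w \mapsto v_0 + w$ identifies the linear hyperplane $W = \{v : (x_0, v) = 0\} \cong \mathbb{F}_2^r$ with $H$; under this identification, restricting a linear functional of $\mathbb{F}_2^k$ to $H$ yields an affine functional on $\mathbb{F}_2^r$, and a dimension count ($k = r + 1$ on each side, plus a quick injectivity check) shows that the assignment $y \mapsto \phi_y|_H$ is a linear bijection from $\mathbb{F}_2^k$ to the space of affine functionals on $\mathbb{F}_2^r$. So evaluating all affine functionals at every point of $\mathbb{F}_2^r$ (each such evaluation occurring $m$ times as a column of $G$) realises the $m$-fold juxtaposition of $RM(1, r)$, and $C$ is equivalent to that code, proving $(2)$. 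The bijection in the final clause is the natural correspondence $(m, r) \leftrightarrow (m, r)$ on equivalence classes: concretely, the $m$-fold $S(r)$ becomes the $m$-fold $RM(1, r)$ by adjoining one zero coordinate to each block and then the codeword $\1$, reflecting the fact that a generator matrix of $RM(1, r)$ arises from one of $S(r)$ by appending a zero column and an all-ones row.

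The main subtleties I anticipate are the Fourier bookkeeping (positivity and integrality of the $n_v$ delivering the parameter constraints) and verifying in case $2$ that restrictions to $H$ realise the full space of affine functionals on $\mathbb{F}_2^r$ rather than some proper subspace; both are handled cleanly by the inversion formula and the dimension count above.
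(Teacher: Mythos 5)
Your argument is correct, but it reaches the lemma by a different route than the paper. The paper simply cites Weiss's classification of constant-weight linear codes for part 1, and obtains part 2 (and the ``moreover'' clause) from the lengthen-and-augment bijection: pass from a code of the second type to an index-two subcode not containing $\1$, strip zero columns, and invoke part 1, the inverse operation being exactly your ``append a zero column per block and adjoin $\1$'' construction. You instead prove both parts directly and uniformly by the Walsh--Hadamard inversion argument (Bonisoli-style): in case 1 the transform forces every nonzero $v\in\mathbb{F}_2^k$ to occur as a column with the same multiplicity $m=d/2^{k-1}$, giving the juxtaposed simplex; in case 2 the relation $n=2d$ (from $\alpha$, $\alpha^c$ both having weight $d$) kills all Fourier coefficients except at $0$ and at the preimage $x_0$ of $\1$, so the columns fill the affine hyperplane $(x_0,v)=1$ with constant multiplicity, and your identification of restrictions of linear functionals with the full space of affine functionals on $\mathbb{F}_2^{k-1}$ (the injectivity check goes through because $H$ affinely spans $\mathbb{F}_2^k$, using $(x_0,v_0)=1$) realises the juxtaposed first-order Reed--Muller code. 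What the paper's route buys is brevity: the hard content is outsourced to the cited reference and the bijection does double duty as the proof of part 2. What your route buys is self-containedness and explicitness: you reprove the cited classification, get the multiplicities and parameter constraints directly from integrality, and expose the affine-hyperplane description of $RM(1,r)$ rather than deducing it by stripping and re-adding zero columns. Your handling of zero columns (deleting them and calling the result ``equivalent'') is the same mild abuse the paper itself commits, and your final bijection paragraph coincides with the paper's construction, justified by noting that each type is determined up to equivalence by $(m,r)$.
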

\begin{proof}
The first part is the result from \cite{constantwt}.  We describe the bijection from which the second part will follow.

Let $C$ be a code of the first type.  Then it has parameters $[m(2^r-1), r, m2^{r-1}]$.  We lengthen the code by adding $m$ zero columns to the generating matrix and then augmenting by adding the $1$ codeword.  This produces an $[m2^r, r+1, m2^{r-1}]$ code.  Since the minimal distance is half the length and it contains the $1$ codeword, it must be of the second type.  Conversely, if $C$ is a code of the second type of length $n$, then $C_0$ is a code which does not contain the $1$ codeword and has one weight of codeword with weight $\frac{n}{2}$.  After removal of an zero columns in the generating matrix, it must be of the first type.  We note that the bijection takes a simplex code to a first order Reed-Muller code.
\end{proof}

\begin{definition}
A binary linear code $C \subset \mathbb{F}_2^n$ is called \emph{projective} if $C^\perp$ has minimum distance at least $3$.
\end{definition}

\begin{lemma}\label{Cproj}
Let $C$ be a binary linear code.  Then $C$ is projective if and only if for all $i \in 1, \dots, n$, there exists a set of codewords $S$ such that
\[
\{i\} = \bigcap_{\alpha \in S}  \supp(\alpha)
\]
\end{lemma}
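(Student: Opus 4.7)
The plan is to reformulate both sides of the equivalence in terms of the columns of a generator matrix of $C$, so that the statement reduces to a dictionary translation. Fix a $k \times n$ generator matrix $G$ of $C$ and write $v_1, \ldots, v_n \in \mathbb{F}_2^k$ for its columns. Every codeword of $C$ has the form $\alpha = uG$ for a unique $u \in \mathbb{F}_2^k$, and then $\alpha_i = u \cdot v_i$; equivalently, $i \in \supp(\alpha)$ iff $u \cdot v_i = 1$.

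First I would check the standard reformulation of projectivity in these terms: $C^\perp$ has minimum distance $\geq 3$ if and only if $v_1, \ldots, v_n$ are all non-zero and pairwise distinct. This is immediate from the fact that a vector $w \in \mathbb{F}_2^n$ lies in $C^\perp$ exactly when $\sum_i w_i v_i = 0$, so weight-$1$ dual codewords correspond to zero columns and weight-$2$ dual codewords correspond to pairs of equal columns.

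Next, for the $(\Rightarrow)$ direction, assume $C$ is projective and fix $i \in [n]$. For each $j \neq i$ the columns $v_i$ and $v_j$ are non-zero and distinct, hence linearly independent in $\mathbb{F}_2^k$, so the linear map $u \mapsto (u \cdot v_i,\, u \cdot v_j)$ is surjective onto $\mathbb{F}_2^2$; in particular I can pick $u_j$ with $u_j \cdot v_i = 1$ and $u_j \cdot v_j = 0$. Taking $S := \{ u_j G : j \neq i \}$ gives a set of codewords all containing $i$, and for each $j \neq i$ at least one member of $S$ (namely $u_j G$) excludes $j$, so $\bigcap_{\alpha \in S} \supp(\alpha) = \{i\}$.

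For the $(\Leftarrow)$ direction, assume for every $i$ such a set $S$ exists. The presence of some $\alpha \in S$ with $\alpha_i = 1$ forces $v_i \neq 0$. For any $j \neq i$, by hypothesis there is $\alpha \in S$ with $\alpha_i = 1$ and $\alpha_j = 0$; writing $\alpha = uG$, the values $u \cdot v_i$ and $u \cdot v_j$ differ, so $v_i \neq v_j$. Running this for every $i$ gives that all columns are non-zero and pairwise distinct, which by the first step is exactly projectivity. I do not foresee any real obstacle here — the entire argument is a translation between column data of $G$ and coordinate evaluations on codewords, and the only mildly delicate point is the $\mathbb{F}_2$-specific remark that two non-zero vectors are linearly independent iff they are distinct, which is what powers the key surjectivity step in the forward direction.
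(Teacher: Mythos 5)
Your proof is correct. It rests on the same basic dictionary as the paper's: projectivity of $C$ means $C^\perp$ has no words of weight $1$ or $2$, which is exactly the statement that the columns $v_1,\dots,v_n$ of a generator matrix are non-zero and pairwise distinct, i.e.\ that no coordinate vanishes identically on $C$ and no two coordinates agree on all of $C$. In this translation your ($\Leftarrow$) direction is the same argument as the paper's first paragraph. Where you genuinely differ is in the direction ``projective $\Rightarrow$ property'': you construct the set $S$ explicitly, using the $\mathbb{F}_2$-specific fact that two distinct non-zero columns are linearly independent, so that for each $j \neq i$ one can choose a codeword whose support contains $i$ but not $j$. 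The paper instead argues the contrapositive: if the property fails at $i$, then either no codeword is supported on $i$ (a weight-one dual word) or there is $j \neq i$ with $\alpha_i = \alpha_j$ for every $\alpha \in C$ (a weight-two dual word) --- a dichotomy whose verification implicitly uses linearity of $C$ and is left to the reader. Your constructive route makes that step completely explicit at the cost of introducing the generator matrix; the paper's is shorter but terser. The only degenerate point in your write-up, the possibly empty $S$ when $n=1$, is harmless under the usual convention for empty intersections and can in any case be avoided by adding to $S$ any codeword supported on $i$, which exists since $v_i \neq 0$.
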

\begin{proof}
Suppose that the above property holds.  Then, for all $i$, there exists a codeword $\alpha \in C$ with $\alpha_i = 1$ and hence $C^\perp$ has no codewords of weight $1$.  Moreover, for all $i \neq j$, there exists $\alpha \in C$ such that $\alpha_i \neq \alpha_j$.  Hence, $C^\perp$ has no codeword of weight $2$ and $C$ is projective.

Conversely, suppose that the above property does not hold for some $i = 1, \dots , n$. Either there does not exist a codeword in $C$ supported on $i$, and hence $C^\perp$ contains a codeword of weight one, or there exists $i \neq j$ such that for every codeword $\alpha \in C$, $\alpha_i = \alpha_j$, and hence $C^\perp$ has a codeword of weight two.  In any case, $C$ is not projective.
\end{proof}

Note that a constant weight code $C$ is projective if and only if it is not a $m$-fold juxtaposition.  That is, if and only if it is a simplex or first order Reed-Muller code.

\begin{theorem}\label{smallaxial}
Let $C$ be a simplex or first order Reed-Muller code and $A_C(a, b, c)$ be a non-degenerate code algebra with $a \neq \frac{1}{2|\alpha|}, \frac{1}{3|\alpha|}$.  Suppose that the field is large enough so that the small idempotents exist.  Then, $A$ is an axial algebra with respect to the small idempotents $\lambda t_\alpha \pm \mu e^\alpha$, $\alpha \in C^*$, with fusion law given in Table $\ref{tab:small}$.
\end{theorem}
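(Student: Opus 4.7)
The plan is to reduce the theorem to the already-proved Theorem \ref{smallidem} (which supplies the fusion law and axis properties) by showing that the small idempotents $e_\pm^\alpha := \lambda t_\alpha \pm \mu e^\alpha$, for $\alpha \in C^*$, generate all of $A_C$. Once generation is established, we obtain an axial algebra since, under the hypotheses $a \neq \frac{1}{2|\alpha|},\frac{1}{3|\alpha|}$, each $e_\pm^\alpha$ is a primitive axis with respect to the fusion law in Table \ref{tab:small}.

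First, I would observe that the subalgebra $B \leq A_C$ generated by the small idempotents is in particular a subspace, so it contains
\[
e_+^\alpha + e_-^\alpha = 2\lambda\, t_\alpha \quad \text{and} \quad e_+^\alpha - e_-^\alpha = 2\mu\, e^\alpha
\]
for every $\alpha \in C^*$. Thus $t_\alpha$ and $e^\alpha$ lie in $B$ for all $\alpha \in C^*$, and to finish generation it suffices to extract each individual toral basis element $t_i$ from products of the $t_\alpha$.

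Second, I would invoke Lemma \ref{Cproj}: since simplex codes and first order Reed--Muller codes are projective (their duals are the Hamming codes, respectively the higher order Reed--Muller codes, of minimum distance at least $3$), for each $i \in [n]$ there is a set $S_i \subseteq C^*$ with $\bigcap_{\alpha \in S_i}\supp(\alpha) = \{i\}$. Here one must check that the codewords in $S_i$ can be chosen in $C^*$ rather than using $\0$ or $\1$; this is automatic for a simplex code (which excludes $\0$ and $\1$ altogether) and a short check for a first order Reed--Muller code since any two weight-$2^{r-1}$ codewords intersect in a specified smaller set. Because the $t_j$ are pairwise orthogonal idempotents, products multiplicatively compute intersections of supports,
\[
\prod_{\alpha \in S_i} t_\alpha \;=\; \sum_{j \in \bigcap_{\alpha \in S_i}\supp(\alpha)} t_j \;=\; t_i,
\]
so $t_i \in B$ for every $i$. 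Combined with the first step, $B$ contains the full basis $\{t_i\} \cup \{e^\alpha : \alpha \in C^*\}$ of $A_C$, hence $B = A_C$.

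Third, I would conclude: Theorem \ref{smallidem} shows that each $e_\pm^\alpha$ is a primitive $\mathcal{F}$-axis for the fusion law of Table \ref{tab:small} (the hypotheses of Theorem \ref{smallidem} specialise to $\Lambda = (a,b,c)$ on the constant-weight code $C$, and the conditions $a \neq \frac{1}{2|\alpha|},\frac{1}{3|\alpha|}$ together with the field being large enough are exactly what is assumed), and the previous step shows they generate $A_C$. Hence $(A_C,\{e_\pm^\alpha : \alpha \in C^*\})$ is a primitive axial algebra. The main obstacle is really a bookkeeping one---verifying projectivity of the two families of codes concretely enough to guarantee the sets $S_i$ can be taken in $C^*$; all the hard work (the fusion law computation) has already been done in Theorem \ref{smallidem}.
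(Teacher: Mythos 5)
Your proposal is correct and follows essentially the same route as the paper: invoke Theorem \ref{smallidem} for primitivity and the fusion law, recover $t_\alpha$ and $e^\alpha$ from sums and differences of $e_\pm^\alpha$, and use projectivity of the simplex and first order Reed--Muller codes (Lemma \ref{Cproj}) to express each $t_i$ as a product of elements $t_\alpha$. The only cosmetic difference is that you justify projectivity via the dual codes, whereas the paper deduces it from the constant-weight classification (these codes are exactly the non-juxtaposition constant weight codes); your extra check that the codewords can be taken in $C^*$ is a harmless refinement of the same argument.
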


\begin{proof}
By Theorem \ref{smallidem}, the small idempotents $e_\pm$ are primitive and semi-simple. Clearly, $e^\alpha$ and $t_\alpha$ are in the span of $e_\pm = \lambda t_\alpha \pm \mu e^\alpha$.  As $C$ is a projective code, there exist $\alpha_1, \dots, \alpha_n \in C^*$, such that
\[
t_i = t_{\alpha_1}\dots t_{\alpha_n}
\]
So, the small idempotents generate the algebra.
\end{proof}

We note that the fusion law in Table \ref{tab:small} may be simplified when some of the eigenvalues coincide.  If the fusion law degenerate into those of the $t_i$, then we may drop the requirement for the code to be a simplex or a first-order Reed-Muller code and instead require it to be an arbitrary constant weight code (i.e. not necessarily projective).  On the other hand, for a projective code $C$ which does not have constant weight, the $s$-map construction will give other types of idempotent depending on the subcode $D$.  Given such a subcode $D$, provided the conjugates of the subcode contain enough codewords to generate the code, the corresponding idempotents will generate the algebra then one has a theorem as above and the algebra is an axial algebra.

\section{Examples} \label{sec:examples}

In this section we give some examples, the most important of which is the Hamming code example.  We wrote a package in {\sc magma} \cite{magma} to assist in some of the calculations for this section. All of the examples we give here are over $\mathbb{R}$.

\subsection{The complete code $\mathbb{F}_2^2$}

Let $C = \mathbb{F}_2^2$.  Let the structure parameters $\Lambda = (a,b,c)$, with $a,b,c \neq 0$, not depend on the codeword or toral element.  Then $A := A_C(a,b,c)$ is a non-degenerate code algebra.  We note that if $D = \{ { \bf 0, 1, } \alpha, \alpha^c \}$ is a code (or a subcode of some code $D'$), then the code algebra $A_D$ is similar to $A_C$ and differs by the possible addition of some extra toral idempotents.

Let $\alpha = (1,0)$ and so $\alpha^c = (0,1)$. By Theorem \ref{frob}, $A$ has a Frobenius form whose Gram matrix is $\diag(1,1,\tfrac{a}{c}, \tfrac{a}{c})$, with respect to the standard basis $\{ t_1, t_2, e^{\alpha}, e^{\alpha^c} \}$.

As noted in Lemma \ref{non-simpleA}, $A$ has precisely two non-trivial ideals and it is easy to see that these are orthogonal with respect to the form, giving us a decomposition
\[
A = \langle t_1, e^\alpha \rangle \oplus \langle t_2, e^{\alpha^c} \rangle
\]
This is also orthogonal in the sense that elements from the two different ideals multiply to give $0$.  So, to find any other idempotents in $A$ it is enough to consider idempotents which lie in either of the two isomorphic ideals,  as any idempotent $e$ of $A$ is a sum of one idempotent from each ideal $e = e_1 + e_2$, where we allow $e_i = 0$.

\begin{lemma}
The idempotents of the subalgebra $A_1 := \langle t_1,e^\alpha \rangle$ are $t_1$ and, when $2ac >c$,
\[
e_\pm := \tfrac{1}{2a}\left( t_{1}  \pm \sqrt{\tfrac{( 2a-1)}{c}} e^\alpha \right).
\]
Furthermore, $e_\pm$ have a $\tfrac{1-a}{2a}$-eigenvector given by
\[
v_\pm := \pm \tfrac{1}{a}\sqrt{(2a-1)c} \,t_1 -e^\alpha.
\]
\end{lemma}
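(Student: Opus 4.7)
The plan is to work entirely inside the two-dimensional subalgebra $A_1 = \langle t_1, e^\alpha\rangle$, where $\alpha = (1,0)$ has $\supp(\alpha) = \{1\}$. Restricted to this basis, the multiplication rules from Definition \ref{CodeAlgebra} simplify drastically: $t_1 \cdot t_1 = t_1$, $t_1 \cdot e^\alpha = a\,e^\alpha$, and $e^\alpha \cdot e^\alpha = c\,t_1$. So I would write a general element as $x = \lambda t_1 + \mu e^\alpha$, expand $x \cdot x = (\lambda^2 + c\mu^2)\,t_1 + 2a\lambda\mu\,e^\alpha$, and impose $x^2 = x$ to get the two scalar equations $\lambda = \lambda^2 + c\mu^2$ and $\mu = 2a\lambda\mu$.

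From the second equation I split into two cases. If $\mu = 0$, the first equation becomes $\lambda = \lambda^2$, so $\lambda \in \{0,1\}$ and the only nontrivial idempotent produced is $t_1$. If $\mu \neq 0$, then $\lambda = \tfrac{1}{2a}$, and substituting into the first equation yields $c\mu^2 = \tfrac{2a-1}{4a^2}$, i.e.\ $\mu = \pm \tfrac{1}{2a}\sqrt{(2a-1)/c}$. The realness of $\mu$ over $\mathbb{R}$ requires $(2a-1)/c \geq 0$, and excluding the degenerate $\mu=0$ case (which gives back $t_1$) this is the hypothesis $2ac > c$. Reassembling $x$ gives exactly $e_\pm$ as stated.

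For the second claim, I would diagonalise $\ad_{e_\pm}|_{A_1}$. In the basis $\{t_1, e^\alpha\}$ one computes the $2\times 2$ matrix
\[
\begin{pmatrix} \tfrac{1}{2a} & \pm \mu c \\ \pm \mu a & \tfrac{1}{2} \end{pmatrix},
\]
whose trace is $\tfrac{1+a}{2a}$. Since $e_\pm$ is an idempotent it contributes eigenvalue $1$, so the other eigenvalue is forced to be $\tfrac{1+a}{2a} - 1 = \tfrac{1-a}{2a}$. Solving the first row of $(M - \tfrac{1-a}{2a} I)v = 0$ gives the eigenvector condition $\tfrac{1}{2} v_1 \pm \mu c\, v_2 = 0$; normalising $v_2 = -1$ recovers $v_\pm = \pm \tfrac{1}{a}\sqrt{(2a-1)c}\,t_1 - e^\alpha$, matching the claim.

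There is no real obstacle here — the whole argument is a short direct calculation in a $2$-dimensional algebra. The only things worth doing carefully are (i) keeping track of the $\pm$ signs consistently between $\mu$ and $v_\pm$, and (ii) being explicit about the reality condition on $\mu$ so that the inequality $2ac > c$ appears correctly regardless of the sign of $c$.
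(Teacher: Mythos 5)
Your proof is correct and takes essentially the same route as the paper: a direct computation in the two-dimensional algebra with $t_1^2=t_1$, $t_1e^\alpha=ae^\alpha$, $(e^\alpha)^2=ct_1$, which yields exactly the two scalar equations and the reality condition $(2a-1)/c>0$, i.e.\ $2ac>c$. The only cosmetic difference is that where the paper quotes its general small-idempotent computation (Section \ref{sec:smallidem}) and Lemma \ref{eig} to identify the $\tfrac{1-a}{2a}$-eigenvector, you re-derive this via the trace of the $2\times 2$ adjoint matrix and a row of $(M-\tfrac{1-a}{2a}I)v=0$, which is an equally valid direct verification with consistent signs.
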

\begin{proof}
It is a simple calculation to show that the only idempotents must be of the form above and these can only exist when the expression under the square root is a positive real. That is, when $2ac >c$.  Indeed, the fact that these are idempotents follows from Section \ref{sec:smallidem}.  Using Lemma \ref{eig}, we see that $v_\pm$ are the required eigenvectors.
\end{proof}

\begin{corollary}
In the subalgebra $A_1 := \langle t_1,e^\alpha \rangle$, $e_\pm$ has the same fusion law as $t_1$ if and only if $a = -1$ and $c <0$.  In this case, $A_1$ is a $2$-dimensional axial algebra with three idempotents.
\end{corollary}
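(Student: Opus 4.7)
The plan is to compare the fusion law of $e_\pm$ inside the two-dimensional subalgebra $A_1=\langle t_1,e^\alpha\rangle$ with that of $t_1$. Restricted to $A_1$, the axis $t_1$ has eigenvalues $1$ on $\langle t_1\rangle$ and $a$ on $\langle e^\alpha\rangle$, and the relation $e^\alpha\cdot e^\alpha=c\,t_1$ forces the fusion rules $1\star 1=\{1\}$, $1\star a=\{a\}$, $a\star a=\{1\}$. By the previous lemma, $e_\pm$ is primitive on $A_1$ with its only non-trivial eigenvalue $\tfrac{1-a}{2a}$ on $\langle v_\pm\rangle$, so agreement of eigenvalue sets demands $\tfrac{1-a}{2a}=a$, equivalently $(2a-1)(a+1)=0$, hence $a\in\{\tfrac12,-1\}$.

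I would dispose of $a=\tfrac12$ immediately, since then $\mu^2=(\lambda-\lambda^2)/c=0$ and $e_\pm$ degenerates to $t_1$, so there is no genuinely new axis whose fusion law could be compared. For $a=-1$, the existence hypothesis $2ac>c$ of the previous lemma becomes $-3c>0$, which forces $c<0$. This establishes the ``only if'' direction; conversely, when $a=-1$ and $c<0$ the elements $e_\pm$ exist over $\mathbb{R}$ and automatically have the correct eigenvalue set $\{1,-1\}$.

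The remaining task, which is the only real obstacle, is to promote coincidence of eigenvalues to coincidence of the full fusion table, i.e.\ to verify that $v_\pm\cdot v_\pm\in\langle e_\pm\rangle$. A direct computation using only $t_1^2=t_1$, $t_1\cdot e^\alpha=-e^\alpha$ and $e^\alpha\cdot e^\alpha=c\,t_1$ shows that $v_\pm\cdot v_\pm$ is a non-zero scalar multiple of $e_\pm$, so the rule $a\star a=\{1\}$ holds for $e_\pm$ in exactly the same way as for $t_1$. Finally, when $a=-1$ and $c<0$ the elements $t_1,e_+,e_-$ are three distinct primitive axes and, by the preceding lemma on idempotents in $A_1$, they exhaust the idempotents of $A_1$; since $A_1$ is two-dimensional any one of them generates it, so $A_1$ is a $2$-dimensional axial algebra with exactly three idempotents, as claimed.
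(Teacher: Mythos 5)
Your argument for the ``if and only if'' part is essentially the paper's: force $\tfrac{1-a}{2a}=a$, factor $(2a-1)(a+1)=0$, discard $a=\tfrac12$ because then $\mu=0$ and $e_\pm=t_1$, and read off $c<0$ from the existence condition $2ac>c$ at $a=-1$; the paper's proof is exactly this and does not even write out the fusion check $v_\pm^2\in\langle e_\pm\rangle$, so your extra verification is a welcome (and correct) addition -- with $a=-1$, $c<0$ one indeed gets that the square of the $(-1)$-eigenvector of $e_\pm$ is $4c$ times that same idempotent (up to the $\pm$-labelling of $v_\pm$ in the preceding lemma, which is crossed when $c<0$ because $c\sqrt{(2a-1)/c}=-\sqrt{(2a-1)c}$; this sign bookkeeping is inherited from the lemma, not a fault of yours). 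The one genuine slip is the final sentence: ``since $A_1$ is two-dimensional any one of them generates it'' is false, because a single idempotent generates only its own span, which is one-dimensional. What is true, and what you need for the axial-algebra claim, is that any two of the three idempotents generate $A_1$: for instance $e_++e_-=-t_1$ and $e_+-e_-$ is a nonzero multiple of $e^\alpha$, so $\{e_+,e_-\}$ (or $\{t_1,e_\pm\}$) already spans $A_1$. With that correction the last assertion follows as you intend, the three idempotents being exactly those listed in the preceding lemma.
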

\begin{proof}
We require the eigenvalue $\tfrac{1-a}{2a} = a$, giving $2a^2+a-1 = (2a-1)(a+1) =0$.  Since $a = \frac{1}{2}$ would imply that $e_\pm = t_1$, we are left with $a=-1$.  Then, for $e_\pm$ to exists, we require $c <0$.
\end{proof}

\subsection{Even weight vectors in $\mathbb{F}_2^3$}

Consider the even weight code in $\mathbb{F}_2^3$
\[
C=\left\{ \left( 0,0,0\right) ,\left( 0,1,1\right) ,\left( 1,0,1\right) ,\left( 1,1,0\right) \right\}
\]
and let $A := A_C$ be the code algebra with non-zero structure parameters $(a,b,c)$.  Then $A$ is a non-degenerate code algebra with a Frobenius form with Gram matrix $\diag(1,1,1,\frac{c}{a}, \frac{c}{a}, \frac{c}{a})$ in the usual basis.

Using Proposition \ref{smap}, we see that, in addition to the small idempotents we can get from the three subcodes of the form $\{ \bf 0, \alpha \}$, we can get $s$-map idempotents from the code $C$.  In particular, if we pick $(a,b,c) = (\frac{1}{2}, \frac{1}{2}, 1)$ then the fusion law for the small idempotents degenerates to be the same as for the toral idempotents $t_i$.  Since these idempotents generate the algebra, $A$ is an axial algebra.  Moreover, it is an axial algebra of Jordan type $\frac{1}{2}$ (i.e.\ the eigenvalues are $1$, $0$, $\frac{1}{2}$).  There are 6 such small idempotents given by
\[
e_{\pm \alpha} := \tfrac{1}{2} t_\alpha \pm \tfrac{1}{2}e^\alpha
\]

Moreover, the $s$-map idempotents from $C$ also have the same fusion law.  In total, we have four primitive such idempotents coming from choosing $v$ to be $(0,0,0), (1,0,0), (0,1,0), (0,0,1)$.  Other choices of $v$ give the same results.  These are given by
\[
s(C,v) = \tfrac{1}{3}t + \tfrac{1}{3} \sum_{\alpha \in C*} (-1)^{(v,\alpha)} e^\alpha
\]
where $t = \sum_{i=1}^3 t_i$.

The group spanned by the Miyamoto involutions associated with these 13 involutions generate an infinite group.  Indeed the composition of any two of the Miyamoto involutions associated to $s(C,v)$ coming from the code $C$ gives an element $g$ of infinite order.  The orbit of, for example, $t_1$ under $g$ is an infinite set of idempotents.  If however, we restrict to the $t_i$ and small idempotents, then these $9$ axes are closed under the action of their associated Miyamoto involutions.

In fact, this example is an example of the Jordan algebra of symmetric matrices with the product $a \circ b = \tfrac{1}{2}(ab - ba)$.  To see this, let $E_{i,j}$ be the matrix which is all zero except for a $1$ in the $i,j$ position and observe that $t_i \mapsto E_{i,i}$ and $e^\alpha \mapsto E_{i,j} + E_{j,i}$, where $\{i,j\} = \supp(\alpha)$ gives the desired map.  This also works when $C$ is the even code of length $n = 4$, but not for larger $n$.

\subsection{Hamming code}\label{sec:hamming}

Let $C = H_8$ be the extended Hamming code of length $8$ and pick $(a,b,c) = (\tfrac{1}{4}, \tfrac{1}{2},1)$ to be the structure parameters.  Then, $A := A_C$ is a $22$-dimensional non-degenerate code algebra with a Frobenius form.  We note that the structure parameters are the same as those which come from a VOA.

Again, using Proposition \ref{smap}, we get more idempotents which are of the form
\[
s(C,v) = \tfrac{1}{8}t + \tfrac{1}{8} \sum_{\alpha \in C*} (-1)^{(v,\alpha)} e^\alpha
\]
where $t = \sum_{i=1}^8 t_i$.  By choosing $v \in \mathbb{F}_2^8 - C$ to have odd weight, we get a set of eight mutually orthogonal idempotents.  That is, a torus.  Similarly, we obtain another torus by using even weight vectors in $\mathbb{F}_2^8 - C$.  Together with the standard torus, this gives three distinct tori.  Moreover, all these idempotents have the same fusion law and they generate the algebra, showing that $A$ is an axial algebra of Jordan type $\frac{1}{4}$.  Since they also span the algebra, $A$ is $1$-closed.

The Miyamoto involution associated with the the axis in one torus permutes the other two tori.  In particular, in this example, unlike the even code example above, the set of $24$ axes is closed under the action of their Miyamoto involutions.

The automorphism group of the code acts strongly transitively, hence its action is induced faithfully on $A$.  The automorphism group of the Hamming code is $2^3{:}PSL_3(2)$.  Together with the Miyamoto involutions we see a group with shape
\[
G := 2^6{:}(PSL_3(2) \times S_3).
\]
By Theorem \ref{motivatingthm}, we see that $A$ can be embedded in the VOA $V_{H_8}$ constructed from the Hamming code $H_8$.  Moreover, the non-zero elements of $A$ lie in the weight $2$ graded part of $V_{H_8}$.  We note that the automorphism group $G \leq \Aut(A_{H_8})$ is the same group as the full automorphism group of $V_{H_8}$ \cite{HammingVOA}.


\appendix

\section{Appendix: Links to VOAs}\label{sec:motivation}

In this appendix, we outline the construction of code VOAs which provides part of the motivation for the definition of code algebras.  For more details we refer the reader to \cite{M2}. We do not attempt to define Virasoro algebras or VOAs. Throughout, let $C \subseteq \mathbb{F}_2^n$ be an \emph{even} binary linear code of length $n$ (i.e. all codewords in $C$ have even weight). 

The Virasoro algebra $\Vir$ with central charge $\frac{1}{2}$ has three irreducible lowest weight modules $L(\frac{1}{2},0)$, $L(\frac{1}{2},\frac{1}{2})$ and $L(\frac{1}{2},\frac{1}{16})$ corresponding to the three possible weights $0$, $\frac{1}{2}$ and $\frac{1}{16}$. Let
\[ M =  L(\tfrac{1}{2},0) \otimes L(\tfrac{1}{2},\tfrac{1}{2}). \]
The fusion table of the lowest weight modules are known:
\begin{align*}
L(\tfrac{1}{2},0) \times L(\tfrac{1}{2},0) &= L(\tfrac{1}{2},0), \\
L(\tfrac{1}{2},0) \times L(\tfrac{1}{2},\tfrac{1}{2}) &= L(\tfrac{1}{2},\tfrac{1}{2}) \times L(\tfrac{1}{2},0) = L(\tfrac{1}{2},\tfrac{1}{2}), \\
L(\tfrac{1}{2},\tfrac{1}{2}) \times L(\tfrac{1}{2},\tfrac{1}{2}) &= L(\tfrac{1}{2},0).
\end{align*}
So, $M$ has a natural $\ZZ_2$-grading. Since the two lowest weight modules are also $\ZZ^+$-graded, $M$ is $\frac{1}{2}\ZZ^+$-graded with $M_{\bar{0}} = L(\frac{1}{2},0)$ and $M_{\bar{1}} = L(\frac{1}{2},\frac{1}{2})$, where $\bar{0}$ denotes the integer graded part and $\bar{1}$ denotes the half-integer graded part. This makes $M$ into a \emph{super vertex operator algebra} (SVOA) with vertex operator $Y:M \otimes M \to M((z))$, where $M((z))$ is the space of formal Laurent series with coefficients in $M$.

We denote by $w$ the Virasoro element of $M$; this lies in $M_0$. We fix a lowest weight vector $q$ with respect to $w$ for the eigenvalue $\frac{1}{2}$.  So, by definition, we have:
\begin{alignat*}{2}
w_{1}q &= L_0q &&= \tfrac{1}{2} q, \\
w_{n+1}q &= L_n q &&= 0,  \qquad \mbox{for all } n \geq 1,
\end{alignat*}
where $L_n$ are the generators in the Virasoro algebra.  By direct computation \cite[(4.9)]{M3}, we obtain:
\begin{alignat*}{3}
q_{-2}q &= 2w, \qquad & q_{-1}q &= 0,\qquad & q_0q &= 1, \\
q_{-1}1 &= q, & q_nq &= 0, & \mbox{for all } &n \geq 1.
\end{alignat*}

We now consider a tensor product of $n$ copies of the SVOA $M$
\[
\hat{M} := M^1 \otimes \dots \otimes M^n,
\]
where $M^i \cong M$.  This produces a SVOA with vertex operator $\hat{Y}$ given by
\[
\hat{Y}(\hat{M},z) = \hat{Y}(M^1 \otimes \dots \otimes M^n, z) := Y^1(M^1,z) \otimes \dots \otimes Y^n(M^n,z).
\]
Given a codeword $\alpha \in C$, set
\[
\hat{M}_\alpha := \bigotimes_{i=1}^n M^i_{\overline{\alpha_i}}.
\]
For example, $\hat{M}_{(0, \dots, 0)} = L(\tfrac{1}{2},0) \otimes\dots \otimes L(\tfrac{1}{2},0)$ and $\hat{M}_{(1, \dots, 1)} = L(\tfrac{1}{2},\tfrac{1}{2}) \otimes\dots \otimes L(\tfrac{1}{2},\tfrac{1}{2})$.  We have the following:

\begin{lemma}{\normalfont \cite[Lemma 4.2]{M3}}
Let $u \in \hat{M}_\alpha$, $v \in \hat{M}_\beta$ with $\alpha, \beta \in C$.  Then,
\[
u_mv \in \hat{M}_{\alpha + \beta}, \qquad \mbox{for all } m \in \ZZ.
\]
\end{lemma}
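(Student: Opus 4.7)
The plan is to reduce to pure tensors, expand the vertex operator on $\hat{M}$ as a tensor product of vertex operators on each factor, and then apply the fusion rules componentwise.

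First I would note that the assignment $(u,v) \mapsto u_m v$ is bilinear in $u$ and $v$, and that $\hat{M}_\gamma = \bigotimes_{i=1}^n M^i_{\overline{\gamma_i}}$ is spanned by pure tensors. So it suffices to prove the claim for $u = u^{(1)} \otimes \cdots \otimes u^{(n)}$ with $u^{(i)} \in M^i_{\overline{\alpha_i}}$ and $v = v^{(1)} \otimes \cdots \otimes v^{(n)}$ with $v^{(i)} \in M^i_{\overline{\beta_i}}$.

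Next I would use the defining factorisation of the vertex operator on the tensor product,
\[
\hat{Y}(u, z) = Y^1(u^{(1)}, z) \otimes \cdots \otimes Y^n(u^{(n)}, z),
\]
to compute
\[
\hat{Y}(u,z)v = \bigotimes_{i=1}^n Y^i(u^{(i)},z) v^{(i)} = \sum_{m_1,\dots,m_n \in \ZZ} \Big( \bigotimes_{i=1}^n (u^{(i)})_{m_i} v^{(i)} \Big) z^{-(m_1+\cdots+m_n)-n}.
\]
Extracting the coefficient of $z^{-m-1}$ therefore expresses $u_m v$ as a finite sum of pure tensors of the form $\bigotimes_{i=1}^n (u^{(i)})_{m_i} v^{(i)}$, where the indices satisfy $m_1 + \cdots + m_n = m+1-n$.

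Finally I would appeal to the fusion rules for the $\Vir$-modules stated just before the lemma: since $u^{(i)} \in L(\tfrac12, \overline{\alpha_i})$ and $v^{(i)} \in L(\tfrac12, \overline{\beta_i})$, the identity $L(\tfrac12,\overline{\alpha_i}) \times L(\tfrac12,\overline{\beta_i}) = L(\tfrac12, \overline{\alpha_i+\beta_i})$ forces $(u^{(i)})_{m_i} v^{(i)} \in M^i_{\overline{\alpha_i+\beta_i}}$ for every $m_i \in \ZZ$. Hence each summand lies in $\bigotimes_i M^i_{\overline{\alpha_i+\beta_i}} = \hat{M}_{\alpha+\beta}$, and so does $u_m v$. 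The main obstacle is mostly bookkeeping — one must be careful with the sign conventions arising from the $\ZZ_2$-grading of the SVOA and with the formal Laurent series manipulations — but these are purely scalar issues that do not change which graded component the output lives in, so the fusion rules really do conclude the argument; the hypothesis that $C$ is even is not used at this stage (it enters only to guarantee that $\alpha+\beta \in C$ when $C$ is assumed linear).
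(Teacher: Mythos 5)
Your argument is correct: reducing to pure tensors, expanding $\hat{Y}(u,z)v$ factorwise, and applying the $\ZZ_2$-grading of $M$ (equivalently the Virasoro fusion rules) componentwise is exactly the standard proof, and the sign and truncation issues you flag are indeed harmless for determining the graded component. Note that the paper itself does not prove this lemma but quotes it from Miyamoto \cite[Lemma 4.2]{M3}, so your proof simply fills in that reference; the only small slip is in your final aside, since $\alpha+\beta\in C$ follows from linearity alone, while evenness of $C$ is needed elsewhere (to make $M_C$ a genuine VOA rather than an SVOA), not for this lemma.
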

Since $\hat{M}$ is an SVOA, it has supercommutativity; however, we want a VOA with commutativity of the operator. To correct this, we take a central extension of $C$ by $\pm 1$.  Let $\nu_i := (0, \dots, 0,1,0,\dots, 0)$ with a $1$ in the $i$\textsuperscript{th} position.  We define formal elements $e^{\nu_i}$ which satisfy $e^{\nu_i}e^{\nu_i} = 1$ and $e^{\nu_i}e^{\nu_j} = -e^{\nu_j}e^{\nu_i}$ for $i \neq j$.  For words $\alpha = \nu_{j_1} +\dots + \nu_{j_k}$ with $j_1 < \dots < j_k$, define
\[
e^\alpha = e^{\nu_{j_1}}\dots e^{\nu_{j_k}}
\]
Then, we have the following
\begin{lemma}{\normalfont \cite[Lemma 4.4]{M3}}
For $\alpha$ and $\beta$,
\[
e^\alpha e^\beta = (-1)^{(\alpha, \beta) + \vert \alpha \vert \vert \beta \vert} e^\beta e^\alpha
\]
where $( \cdot, \cdot)$ denotes the usual dot product.
\end{lemma}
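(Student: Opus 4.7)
The plan is a direct sign-counting argument based solely on the defining relations $e^{\nu_i}e^{\nu_i} = 1$ and $e^{\nu_i}e^{\nu_j} = -e^{\nu_j}e^{\nu_i}$ for $i \neq j$, together with the definition of $e^\alpha$ as an ordered product.

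First I would fix notation: write $\supp(\alpha) = \{ j_1 < \cdots < j_k \}$ and $\supp(\beta) = \{ i_1 < \cdots < i_\ell \}$, so that $e^\alpha = e^{\nu_{j_1}} \cdots e^{\nu_{j_k}}$, $e^\beta = e^{\nu_{i_1}} \cdots e^{\nu_{i_\ell}}$, $|\alpha|=k$ and $|\beta|=\ell$. The heart of the proof is the single-factor swap: for any index $j$, I claim
\[
e^{\nu_j}\, e^\beta \;=\; (-1)^{\ell - \chi_\beta(j)}\, e^\beta\, e^{\nu_j},
\]
where $\chi_\beta(j) = 1$ if $j \in \supp(\beta)$ and $0$ otherwise. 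Indeed, moving $e^{\nu_j}$ past each of the $\ell$ factors of $e^\beta$ in turn picks up a sign $-1$ from the anticommutation relation at every position, \emph{except} at a position where the factor is $e^{\nu_j}$ itself, where the two factors literally coincide and commute trivially.

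Next I would iterate this over $r = 1, \dots, k$, moving each $e^{\nu_{j_r}}$ in $e^\alpha$ past all of $e^\beta$. The accumulated sign is
\[
\prod_{r=1}^k (-1)^{\ell - \chi_\beta(j_r)} \;=\; (-1)^{k\ell - |\supp(\alpha) \cap \supp(\beta)|},
\]
which gives $e^\alpha e^\beta = (-1)^{k\ell - |\supp(\alpha)\cap \supp(\beta)|}\, e^\beta e^\alpha$. Finally I would note that $|\supp(\alpha)\cap\supp(\beta)| \equiv (\alpha,\beta) \pmod 2$ (by definition of the dot product over $\mathbb{F}_2$) and that $(-1)^{-x} = (-1)^x$, so the exponent reduces to $|\alpha||\beta| + (\alpha,\beta)$, yielding the stated formula.

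There is no serious obstacle: the only subtle point is the treatment of coincident indices in the single-factor swap (where one must be careful \emph{not} to count a sign). Beyond that, the argument is a routine bookkeeping exercise. If desired, it could be organised as a double induction on $k$ and $\ell$, but the direct count above seems cleanest.
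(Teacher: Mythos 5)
Your sign count is correct and complete: the single-factor swap $e^{\nu_j}e^\beta = (-1)^{|\beta|-\chi_\beta(j)}e^\beta e^{\nu_j}$ is right (the anticommutation relation only applies to distinct indices, and an element trivially commutes with itself, so no sign is picked up at a coincident factor), iterating it preserves the order of the factors of $e^\alpha$ and accumulates the sign $(-1)^{|\alpha||\beta| - |\supp(\alpha)\cap\supp(\beta)|}$, and the reduction $|\supp(\alpha)\cap\supp(\beta)| \equiv (\alpha,\beta) \pmod 2$ gives exactly the stated exponent. Note, however, that the paper itself offers no proof to compare against: the lemma is imported verbatim from Miyamoto \cite[Lemma 4.4]{M3}, where these $e^\alpha$ generate a twisted group algebra of the code used to fix supercommutativity in the construction of $M_C$. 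So your contribution is a self-contained elementary verification of a quoted result, which is the standard argument one would give; the only point deserving the care you already gave it is precisely the treatment of indices in $\supp(\alpha)\cap\supp(\beta)$, where a sign must not be counted.
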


Combining this with the previously defined SVOA, we obtain the VOA
\[
M_C := \bigoplus_{\alpha \in C} M_\alpha,
\]
where $M_\alpha := (\hat{M}_\alpha \otimes e^\alpha)$, which has vertex operator
\[
Y(v\otimes e^\alpha, z) := \hat{Y}(v,z)\otimes e^\alpha.
\]

\begin{theorem}{\normalfont \cite[Theorem 4.2]{M3}}
When $C$ is an even binary linear code, $(M_C, Y, {\bf w}, \1)$ is a simple VOA, with Virasoro element
\[
{\bf w} = \sum_{i=1}^n (1 \otimes \dots \otimes 1 \otimes w^i \otimes 1 \otimes \dots \otimes 1) \otimes e^0
\]
where $w^i$ is the Virasoro element for $M^i$ and vacuum element
\[
\1 = (1 \otimes \dots \otimes 1) \otimes e^0.
\]
\end{theorem}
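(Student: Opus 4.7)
The plan is to verify the VOA axioms for $(M_C, Y, \mathbf{w}, \1)$ and then establish simplicity via an ideal argument that exploits the $C$-grading.

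First I would establish that $\hat{M} = M^1 \otimes \cdots \otimes M^n$ is a super vertex operator algebra as an $n$-fold tensor of the SVOA $M$, and that the subspaces $\hat{M}_\alpha$ form a $\ZZ_2^n$-grading refined to $C$. The lemma preceding the theorem gives closure of products: $u_m v \in \hat{M}_{\alpha + \beta}$ when $u \in \hat{M}_\alpha$ and $v \in \hat{M}_\beta$. Tensoring with the formal elements $e^\alpha$ and using $e^\alpha e^\beta$ in the central extension then gives $Y(u \otimes e^\alpha, z)(v \otimes e^\beta) \in M_{\alpha + \beta}((z))$, which lies in $M_C((z))$ because $C$ is closed under addition.

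Next I would verify that $Y$ is local in the non-super sense. The SVOA supercommutativity on $\hat{M}$ contributes a sign $(-1)^{|\alpha||\beta|}$ when swapping $u \in \hat{M}_\alpha$ and $v \in \hat{M}_\beta$, while the displayed commutativity relation for the $e^\alpha$ contributes $(-1)^{(\alpha,\beta) + |\alpha||\beta|}$. The total sign on $Y$ is then $(-1)^{2|\alpha||\beta| + (\alpha,\beta)} = (-1)^{(\alpha,\beta)}$, and the evenness hypothesis on $C$ is exactly what is needed to control this sign uniformly. Translation covariance and the vacuum axiom are inherited from each tensor factor, and $\mathbf{w} = \sum_i w^i$ serves as a Virasoro element of central charge $n/2$ since the $w^i$ live in disjoint tensor factors and mutually commute as operators. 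This establishes that $(M_C, Y, \mathbf{w}, \1)$ is a VOA.

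For simplicity, the argument has four main steps. First, each $\hat{M}_\alpha$ is irreducible as an $\hat{M}_{\0}$-module, since $\hat{M}_{\0} = L(\tfrac{1}{2},0)^{\otimes n}$ is a simple VOA, $L(\tfrac{1}{2},\tfrac{1}{2})$ is an irreducible $L(\tfrac{1}{2},0)$-module, and simple-module irreducibility is preserved under tensor products. Second, given a non-zero ideal $I$ of $M_C$, the $C$-grading is preserved by $Y$, so $I = \bigoplus_{\alpha \in C} I_\alpha$ with each $I_\alpha$ an $\hat{M}_{\0}$-submodule of $M_\alpha$; by irreducibility, $I_\alpha$ is either $0$ or all of $M_\alpha$. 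Third, the support $S := \{\alpha \in C : I_\alpha = M_\alpha\}$ is closed under addition, since multiplying non-zero elements of $M_\alpha \cap I$ with elements of $M_\beta$ produces non-zero elements of $M_{\alpha + \beta} \cap I$. Fourth, using the explicit non-degenerate products recorded before the theorem (in particular $q_{-2}q = 2w$ and $q_0 q = 1$), for any $\0 \neq \alpha \in S$ one can multiply two elements of $M_\alpha$ to land non-trivially in $M_{\0}$, forcing $\0 \in S$; hence $M_{\0} \subseteq I$, so $\1 \in I$, and therefore $I = M_C$.

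The main obstacle is the sign bookkeeping in the locality verification: reconciling the supercommutativity of $\hat{Y}$ with the ordinary commutativity required of $Y$ on $M_C$ is precisely where evenness of $C$ enters, and verifying that the resulting Jacobi identity holds after the $e^\alpha$-twisting is the technical heart of the construction. By contrast, the grading and ideal arguments are largely formal consequences of the irreducibility of the $\hat{M}_\alpha$ as $\hat{M}_{\0}$-modules together with non-vanishing of the bottom products in $M^i$.
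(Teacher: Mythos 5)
The paper does not prove this statement: it is quoted verbatim from Miyamoto \cite[Theorem 4.2]{M3}, so there is no internal proof to compare against. Judged on its own, your outline has the right overall shape --- in particular the simplicity argument (decompose an ideal along the $C$-grading using that the $M_\alpha$ are pairwise non-isomorphic irreducible $\hat{M}_{\0}$-modules, then use $q^\alpha_{\,|\alpha|-3}q^\alpha = 2\sum_{\supp(\alpha)}{\bf w}^i \neq 0$ to drag the ideal down to $M_{\0}$ and hence to $\1$) is essentially the standard one and is sound in outline. But the step you yourself flag as the technical heart, the locality sign count, is wrong as written.

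Concretely: with the slot-wise vertex operator $\hat{Y} = Y^1 \otimes \dots \otimes Y^n$ used here (no Koszul signs), swapping $\hat{Y}(u,z)$ and $\hat{Y}(v,w)$ for $u \in \hat{M}_\alpha$, $v \in \hat{M}_\beta$ costs $\prod_i (-1)^{\alpha_i\beta_i} = (-1)^{(\alpha,\beta)}$, since only coordinates where both entries are odd contribute a supercommutation sign; it does not cost $(-1)^{|\alpha||\beta|}$ as you assert. Combining with $e^\alpha e^\beta = (-1)^{(\alpha,\beta)+|\alpha||\beta|}e^\beta e^\alpha$, the residual sign is $(-1)^{|\alpha||\beta|}$, and \emph{this} is what evenness of $C$ kills. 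Your bookkeeping leaves the residual sign $(-1)^{(\alpha,\beta)}$ and then claims evenness controls it; it does not. Two even codewords can have odd intersection --- that would require $C$ to be self-orthogonal, a strictly stronger hypothesis --- and the even-weight code of length $3$ used in Section 5.2 of this very paper is even but not self-orthogonal (e.g.\ $\bigl((0,1,1),(1,0,1)\bigr) = 1$), so under your accounting the theorem would fail for it. You need to redo the locality computation with the signs assigned to the correct sources before the rest of the argument can stand.
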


For our purposes, we want to consider certain elements of this VOA $M_C$.  For $1 \leq i \leq n$ and $\alpha \in C$, define
\begin{align*}
{\bf w}^i &= (1 \otimes \dots \otimes 1 \otimes w^i \otimes 1 \otimes \dots \otimes 1) \otimes e^0 \\
q^\alpha &= \left( \bigotimes_{i=1}^n q^{\alpha_i} \right) \otimes e^\alpha
\end{align*}
where we understand $q^1 = q$ and $q^0 = 1$.  Note that, since $q$ has grading $\frac{1}{2}$, $q^\alpha$ has grading $\frac{\vert \alpha \vert}{2}$.  In order to multiply certain elements in $M_C$, we first look at multiplication in $M$.

\begin{lemma}
In $(M,y,1,w)$, we have
\begin{enumerate}
\item[$1.$] $Y(1,z)1 = 1$
\item[$2.$] $Y(1,z)q = q$
\item[$3.$] $Y(q,z)1 = q + \sum_{k \geq 1} q_{-k-1}1 z^k$
\item[$4.$] $Y(q,z)q = 1z^{-1} + 2w z + \sum_{k \geq 2} q_{-k-1}q z^k$   \qed
\end{enumerate}
\end{lemma}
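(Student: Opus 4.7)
The four parts of this lemma all follow by direct calculation from the VOA axioms and the explicit products $q_{-2}q = 2w$, $q_{-1}q = 0$, $q_0q = 1$, $q_{-1}\mathbf{1} = q$ and $q_n q = 0$ for $n \geq 1$ that were recorded just before the lemma statement. Parts 1 and 2 are immediate from the vacuum axiom for a VOA, which states that $Y(\mathbf{1},z) = \mathrm{id}_M$; applying this to $\mathbf{1}$ gives part 1, and applying it to $q$ gives part 2.

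For part 3, the plan is to invoke the creation property: for any $v \in M$, $v_n \mathbf{1} = 0$ for $n \geq 0$, and so the expansion $Y(v,z)\mathbf{1} = \sum_{n \in \mathbb{Z}} v_n \mathbf{1}\, z^{-n-1}$ collapses to $\sum_{k \geq 0} v_{-k-1}\mathbf{1}\, z^k$. Setting $v = q$, the $k = 0$ term is $q_{-1}\mathbf{1} = q$ (which is exactly the listed relation), and the remaining terms give precisely $\sum_{k \geq 1} q_{-k-1}\mathbf{1}\, z^k$.

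For part 4, I would simply write out the formal series
\[
Y(q,z)q = \sum_{n \in \mathbb{Z}} q_n q \cdot z^{-n-1}
\]
and substitute the listed values. The term $q_n q$ vanishes for every $n \geq 1$, so the negative powers of $z$ in the sum are cut off, leaving $q_0 q \cdot z^{-1} = z^{-1}$ as the only singular contribution. The $z^0$ coefficient is $q_{-1}q = 0$, the $z^1$ coefficient is $q_{-2}q = 2w$, and the remaining $z^k$ coefficients for $k \geq 2$ are $q_{-k-1}q$, which are left unevaluated.

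There is essentially no obstacle here, because all the ingredients have already been recorded. The only non-trivial input is the computation $q_{-2}q = 2w$ (and the vanishings $q_{-1}q = 0$, $q_0 q = 1$, $q_n q = 0$ for $n \geq 1$), which is cited from \cite[(4.9)]{M3} and ultimately comes from the fact that $q$ is a lowest weight vector of conformal weight $\tfrac12$ in $L(\tfrac12,\tfrac12)$, together with the unique singular vector structure of this Virasoro module; one could alternatively rederive these using the $L_{-1}$ translation property and the fact that $q_n q$ has conformal weight $1 - n$, which forces vanishing for $n \geq 2$ in a lowest-weight setting and then $n = 0, -1, -2$ are fixed by the normalisation of $q$.
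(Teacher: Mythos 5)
Your proposal is correct and matches the paper's intended justification: the paper states this lemma with a \qed and no written proof, precisely because all four parts follow immediately from the vacuum and creation axioms together with the relations $q_{-2}q = 2w$, $q_{-1}q = 0$, $q_0q = \mathbf{1}$, $q_{-1}\mathbf{1} = q$ and $q_nq = 0$ for $n \geq 1$ recorded just beforehand. Your expansion of $Y(q,z)q$ and $Y(q,z)\mathbf{1}$ coefficient by coefficient is exactly the omitted calculation.
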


Using the above lemma, we can now calculate some products in $M_C$.

\begin{proposition}\label{VOAprod}
Let $\alpha, \beta \in C$.  Then
\begin{enumerate}
\item[$1.$] ${\bf w}^i_{\phantom{i}1} {\bf w}^j = 2{\bf w}^i \delta_{ij}$
\item[$2.$] ${\bf w}^i_{\phantom{i}1} q^\alpha = \begin{cases} \frac{1}{2} q^\alpha  & \mbox{if } i \in \supp(\alpha) \\
  0 & \mbox{otherwise} \end{cases}$
\item[$3.$] $q^\alpha_{\phantom{\alpha}\vert \alpha \cap \beta \vert -1} q^\beta = q^{\alpha+\beta}$
\item[$4.$] $q^\alpha_{\phantom{\alpha}\vert \alpha \vert -3} q^\alpha = 2 \sum_{\supp(\alpha)} {\bf w}^i $
\item[$5.$] $q^\alpha_{\phantom{\alpha}n} q^{\alpha^c} = 0$ \quad for any $n \geq 0$
\end{enumerate}
\end{proposition}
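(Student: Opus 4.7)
The plan is to compute each product by unwinding the componentwise definition of $\hat Y$ and then ruling out all but a handful of contributions using grading. For pure tensors $u = \bigotimes_{i=1}^N u_i$ and $v = \bigotimes_{i=1}^N v_i$ (with $N$ the length of $C$, to avoid clashing with the index $n$ in part 5), comparing coefficients of $z^{-k-1}$ in $\hat Y(u,z)v = \bigotimes_i Y^i(u_i,z) v_i$ gives
\[
u_k v = \sum_{m_1 + \cdots + m_N = k + 1 - N} \bigotimes_{i=1}^N (u_i)_{m_i} v_i.
\]
Into this I would substitute the small-mode identities recalled just before the proposition: $1_m v = \delta_{m,-1} v$, $w_1 v = L_0 v$, $q_{-1} 1 = q$ with $q_m 1 = 0$ for $m \ge 0$, and $q_{-2}q = 2w$, $q_{-1}q = 0$, $q_0 q = 1$, $q_m q = 0$ for $m \ge 1$; I would also use that $q_{-k-1}v$ lies strictly above the base grading for $k \ge 1$.

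Parts (1) and (2) are then immediate. In every slot not containing a Virasoro element, the identity $1_m v = \delta_{m,-1} v$ forces $m_i = -1$, so the composition is rigid and $m_i = 1$ in the single $w$-slot. Evaluating $w_1$ on $1$, $q$, or $w$ via $L_0$ produces $0$, $\tfrac12 q$, or $2w$, as required.

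Parts (3) and (4) hinge on a grading count. At positions with $(\alpha_k, \beta_k) = (0,0), (0,1), (1,0)$ the lowest-grading output is forced by the $\delta$-supports above (giving $1$, $q$, $q$ respectively with $m_k = -1$), and any heavier choice would have to be compensated by a lighter choice at another slot, which is impossible. At $(1,1)$ positions the only nonzero options are $q_0 q = 1$ (grading $0$) or $q_{-k-1}q$ for $k \ge 1$ (grading $\ge 2$). For (3), the target grading $\tfrac12|\alpha+\beta|$ forces $q_0 q = 1$ at every $(1,1)$ slot; summing the $m_i$ then gives exactly $|\alpha\cap\beta|-1$, and each tensor factor contributes a $1$, yielding $q^{\alpha+\beta}$. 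For (4), where $\beta = \alpha$, the target grading is $2$, and it must concentrate at a single $(1,1)$ slot via $q_{-2} q = 2w$ while all other $(1,1)$ slots contribute $q_0 q = 1$; summing the $|\alpha|$ choices for the distinguished slot produces $2\sum_{i\in\supp(\alpha)} \mathbf w^i$.

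Part (5) is cleanest once the grading view is in place. Since $\alpha$ and $\alpha^c$ have disjoint supports, every slot is of type $(1,0)$ or $(0,1)$, and in both cases every nonzero output has grading $\ge \tfrac12$; hence each tensor summand has grading $\ge N/2$. But $q^\alpha_n q^{\alpha^c}$ has grading $N/2 - n - 1$, which is strictly less than $N/2$ as soon as $n \ge 0$, so the whole sum vanishes. The main obstacle I expect is the sign bookkeeping from the central-extension elements $e^\alpha$, which satisfy $e^\alpha e^\beta = \pm e^{\alpha+\beta}$ with sign governed by Miyamoto's cocycle; the claimed identities assume these signs collapse cleanly. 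In (3) and (4) this reduces to normalizing $e^\alpha e^\beta$ to $e^{\alpha+\beta}$ (with $e^\alpha e^\alpha = 1$ in (4)) and absorbing the sign into the coefficient; in (5) there is nothing to check since the answer is already zero.
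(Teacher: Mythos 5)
Your proposal is correct and is essentially the paper's own argument: the paper likewise expands the tensor-product vertex operator slot by slot using the listed products $Y(1,z)u$, $Y(q,z)1$, $Y(q,z)q$, $Y(w,z)q$ in $M$ and reads off the coefficient of the relevant power of $z$, which is exactly your mode-sum constraint $\sum m_i = k+1-N$ together with the lowest-weight count. Your explicit remark about the cocycle signs from the $e^\alpha$ factors is a point the paper's proof passes over in silence, but it does not change the method.
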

\begin{proof}
The first part is obvious as ${\bf w}$ is a conformal vector, while the proofs for the fourth and fifth parts are similar to the third.

For the second part, consider $Y({\bf w}^i, z)q^\alpha$.  If $i \notin \supp(\alpha)$, then in the $i$\textsuperscript{th} position we have $w_1 1 = 0$ and hence the result follows.  Suppose $i \in \supp(\alpha)$. Then in all positions $j \neq i$ we have $Y(1,z)u = u$ but in position $i$ we have $Y(w,z)q = \frac{1}{2}q z^{-2} + \sum_{k \geq 1} L_{-k}q z^{k-2}$. Suppose, without loss of generality, that $i=1$ and $\supp(\alpha) = \{1, \dots, k\}$, then we have:
\begin{align*}
Y({\bf w}^1, z)q^\alpha &= (\tfrac{1}{2}q z^{-2} + \sum_{k \geq 1} L_{-k}q z^{k-2}) \otimes q \otimes \dots \otimes q \otimes 1 \otimes \dots \otimes 1 \\
 &=(\tfrac{1}{2}q \otimes q \otimes \dots \otimes q \otimes 1 \otimes \dots \otimes 1) z^{-2} \\
& \qquad  + \sum_{k \geq 1} (L_{-k} q \otimes q \otimes \dots \otimes q \otimes 1 \otimes \dots \otimes 1) z^{k-2}
\end{align*}
The $1$-product is given by reading of the coefficient of the $-1-1 = -2$ power of $z$ above, which is $\frac{1}{2} q^\alpha$.

For the third, consider $Y(q^\alpha, z)q^\beta$.  In $\supp(\alpha)-\supp(\beta)$ positions we have $Y(q,z)1 = q + \sum_{k \geq 1} q_{-k-1}1 z^k$, in $\supp(\beta)-\supp(\alpha)$ positions we have $Y(1,z)q = q$ and for the intersection $\supp(\alpha) \cap \supp(\beta)$ we have $Y(q,z)q = 1z^{-1} + 2w z + \sum_{k \geq 2} q_{-k-1}q z^k$.  Let $I := \vert \alpha \vert$, $J := \vert \beta \vert$ and $K := \vert \supp(\alpha + \beta) \vert$.  Without loss of generality, we assume that $\alpha$ has a 1 in the first $I$ positions and $\beta$ has a 1 in positions $ I-K +1$ to $I -K + J$.
\begin{align*}
Y(q^\alpha, z)q^\beta &= \bigotimes_{i=1}^{I-K} (q + \sum_{k \geq 1} q_{-k-1}1 z^k) \\
& \qquad \otimes \bigotimes_{i=1}^{K} (1z^{-1} + 2w z + \sum_{k \geq 2} q_{-k-1}q z^k) \\
& \qquad \otimes \left(\bigotimes_{i=1}^{J-K} q \right) \otimes 1 \otimes  \dots \otimes 1 \\ 
&= (\bigotimes_{i=1}^{I-K}q \otimes \bigotimes_{i=1}^{K} 1 \otimes \bigotimes_{i=1}^{J-K} q \otimes 1 \otimes \dots \otimes 1) z^{-K}\\
& \qquad + \mbox{higher order terms}
\end{align*}
So, we see that the $K-1 = \vert \alpha \cap \beta \vert -1$ product is $q^{\alpha + \beta}$ as required.
\end{proof}

Now that we have given some particular products in $M_C$, we may give the following motivating theorem.  In Section \ref{sec:codealg}, we will see that this gives an example of a code algebra.

\begin{theorem}\label{motivatingthm}
Let $C$ be an even binary linear code of length $n$ and $M = M_C$ be the corresponding code VOA.  Suppose $A$ is a submodule spanned by
\begin{align*}
t_i &:= \tfrac{1}{2}{\bf w}^i \\
e^\alpha &:= \lambda q^\alpha
\end{align*}
where $i \in 1, \dots, n$ and $\alpha \in C^*$.  We define a commutative multiplication given by
\begin{align*}
t_i t_j &=  \mathrlap{(t_i)_1 t_j} \phantom{(t_i)_1 e^\alpha} = t_i \delta_{ij}\\
t_i e^\alpha &= (t_i)_1 e^\alpha  = \begin{cases}
\mathrlap{\tfrac{1}{4}e^\alpha} & \quad \mbox{if } i \in \supp(\alpha) \\
0 & \quad \mbox{otherwise} \end{cases}\\
e^\alpha e^\beta &= \begin{cases}
e^\alpha_{\phantom{\alpha}\vert \alpha \cap \beta \vert -1} e^\beta  = \lambda e^{\alpha+\beta} & \mbox{if } \alpha \neq \beta, \beta^c \\
\mathrlap{e^\alpha_{\phantom{\alpha}\vert \alpha \vert -3} e^\alpha} \phantom{e^\alpha_{\phantom{\alpha}\vert \alpha \cap \beta \vert -1} e^\beta} = 4\lambda^2 \sum_{\supp(\alpha)} t_i & \mbox{if } \beta = \alpha \\
\mathrlap{q^\alpha_{\phantom{\alpha}1} q^{\alpha^c}} \phantom{e^\alpha_{\phantom{\alpha}\vert \alpha \cap \beta \vert -1} e^\beta} = 0 & \mbox{if } \beta = \alpha^c
\end{cases}
\end{align*}
Then $A$ is a finite-dimensional algebra which lies in
\[
V_2 \oplus \bigoplus_{w \in \mbox{wt}(C)} V_{\frac{w}{2}}
\]
\end{theorem}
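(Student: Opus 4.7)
The plan is to verify, one product at a time, that the multiplication defined in the statement agrees with the specified VOA $n$-products after the rescalings $t_i = \tfrac{1}{2}{\bf w}^i$ and $e^\alpha = \lambda q^\alpha$, and then to collect the consequences (commutativity, finite-dimensionality, grading). Every multiplication rule in the statement is of the form $x \cdot y = x_n y$ for a specific $n$, and each such expression has already been computed in Proposition \ref{VOAprod}; the proof is therefore essentially bookkeeping, with the rescaling constants chosen precisely so that the factors $\tfrac{1}{2}$, $\lambda$ and $\lambda^2$ absorb the numerical coefficients arising from Proposition \ref{VOAprod}.

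First I would go through the five cases in turn. For $t_i \cdot t_j$, expand $(\tfrac{1}{2}{\bf w}^i)_1(\tfrac{1}{2}{\bf w}^j) = \tfrac{1}{4}{\bf w}^i_{\phantom{i}1}{\bf w}^j$ and use Proposition \ref{VOAprod}(1). For $t_i \cdot e^\alpha$, expand $(\tfrac{1}{2}{\bf w}^i)_1(\lambda q^\alpha)$ and apply Proposition \ref{VOAprod}(2). For the three sub-cases of $e^\alpha \cdot e^\beta$, apply parts (3), (4) and (5) of Proposition \ref{VOAprod}, noting that the $\lambda^2$ factor in front of $(q^\alpha)_{|\alpha|-3} q^\alpha$ combines with the $2\sum {\bf w}^i$ output to give $4\lambda^2 \sum t_i$. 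The only non-trivial point in this step is that the output of each product lies again in $A$: for the third rule we need $\alpha + \beta \in C^*$, but $\alpha + \beta \in C$ by linearity and the exclusions $\alpha \neq \beta, \beta^c$ ensure $\alpha + \beta \notin \{\0, \1\}$.

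Next I would justify commutativity of the product. Each pairing is defined by a single $n$-product $u_n v$ where $n = h_u + h_v - 2$ is the weight-preserving index; skew-symmetry $Y(u,z)v = e^{zL_{-1}}Y(v,-z)u$ forces $u_n v = (-1)^{n+1}v_n u$ plus terms involving $L_{-1}^k v_{n+k}u$ for $k \geq 1$, and the latter all have strictly higher weight, so at the weight-preserving level one gets $u_n v = \pm v_n u$. A direct parity check on each of the five cases shows the sign is always $+$, so the defined products are genuinely symmetric and match the symmetric extension of the formulas.

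Finally, $A$ is spanned by the $n + |C^*|$ vectors $t_i$ and $e^\alpha$ and is therefore finite-dimensional. The conformal vector ${\bf w}^i$ has weight $2$, so $t_i \in V_2$; by definition of the grading on $\hat{M}_\alpha$ and the fact that $q$ has lowest weight $\tfrac{1}{2}$, the element $q^\alpha$ (and hence $e^\alpha$) has weight $|\alpha|/2$, so $e^\alpha \in V_{|\alpha|/2}$. Since $|\alpha| \in \mathrm{wt}(C)$ for $\alpha \in C^*$, this gives the desired inclusion $A \subseteq V_2 \oplus \bigoplus_{w \in \mathrm{wt}(C)} V_{w/2}$. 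The main obstacle is the verification that the ostensibly different VOA products invoked in the various cases are mutually consistent with a single commutative algebra structure; this is handled by the skew-symmetry argument above, which uses crucially that we are always at the weight-preserving index so that no $L_{-1}$-corrections appear.
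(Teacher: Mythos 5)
Your proposal follows essentially the same route as the paper: the paper's entire proof is the observation that the multiplication table follows from Proposition \ref{VOAprod}, and your case-by-case bookkeeping with the rescalings $t_i=\tfrac12{\bf w}^i$, $e^\alpha=\lambda q^\alpha$ (absorbing the factors $2$, $\tfrac12$ and $2\sum{\bf w}^i$) is exactly that check, together with the closure remark $\alpha+\beta\in C^*$ and the weight count $t_i\in V_2$, $e^\alpha\in V_{|\alpha|/2}$.

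One caveat: your skew-symmetry digression is both unnecessary and, as stated, incorrect. In the identity $u_nv=(-1)^{n+1}\sum_{k\ge0}\tfrac{(-1)^k}{k!}L_{-1}^k\bigl(v_{n+k}u\bigr)$ the correction terms $L_{-1}^k(v_{n+k}u)$ for $k\ge1$ have weight $h_u+h_v-n-1$, i.e.\ exactly the \emph{same} weight as $u_nv$ (the $L_{-1}$'s compensate the higher mode index), so they cannot be discarded ``by weight''; ruling them out genuinely requires information about the low-weight pieces of the VOA. Fortunately none of this is needed: the multiplication in the theorem is \emph{defined} by the stated bilinear, symmetric formulas, and Proposition \ref{VOAprod} already exhibits symmetric outputs (e.g.\ $q^\alpha_{\,|\alpha\cap\beta|-1}q^\beta=q^{\alpha+\beta}=q^\beta_{\,|\beta\cap\alpha|-1}q^\alpha$), so consistency and commutativity are immediate without any skew-symmetry argument.
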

\begin{proof}
This follows from Proposition \ref{VOAprod}.
\end{proof}
Note that the algebra $A$ is contained in the weight $2$ part of the $M_C$ if and only if $C^*$ contains only weight $4$ codewords.


\begin{thebibliography}{99}
 \bibitem{magma} W. Bosma, J. Cannon and C. Playoust, The Magma algebra system. I. The user language, \textit{J. Symbolic Comput.}, {\bf 24} (1997), 235--265.

\bibitem{CR15} A. Castillo-Ramirez, Associative subalgebras of low-dimensional Majorana algebras, \textit{J. Algebra} {\bf 421} (2015) 119--135.

\bibitem{codealg2} A. Castillo-Ramirez and J. M\textsuperscript{c}Inroy, Idempotents in code algebras and their $\mathbb{Z}_2$-gradings, \textit{in preparation}.

\bibitem{DGH} C. Dong, R.L. Griess and G. H\"ohn, Framed Vertex Operator Algebras, Codes and the Moonshine Module, \textit{Commun. Math. Phys.} {\bf 193} (1998), 407--448.

\bibitem{Axial1} J.I. Hall, F. Rehren and S. Shpectorov, Universal axial algebras and a theorem of Sakuma, \textit{J. Algebra} {\bf 421} (2015) 394--424.

\bibitem{Axial2} J.I. Hall, F. Rehren and S. Shpectorov, Primitive axial algebras of Jordan type, \textit{J. Algebra} {\bf 437} (2015) 79--115.

\bibitem{Axial3} J.I. Hall, Y. Segev and S. Shpectorov, On primitive axial algebras of Jordan type, arXiv:1705.03492, 2017.

\bibitem{I09} A. A. Ivanov, \emph{The Monster Group and Majorana Involutions}, Cambridge Univ. Press, Cambridge, Cambridge Tracts in Mathematics 176 (2009).

\bibitem{IPSS10} A. A. Ivanov, D. V. Pasechnik, \'{A}. Seress and S. Shpectorov, Majorana representations of the symmetric group of degree $4$, \textit{J. Algebra} {\bf 324} (2010) 2432--2463.

\bibitem{Lya} C. H. Lam, H. Yamauchi, On the structure of framed vertex operator algebras and their pointwise frame stabilizers, \textit{Comm. Math. Phys.} {\bf 277} (2008), no. 1, 237--285. 

\bibitem{HammingVOA} A. Matsuo and M. Matsuo, The automorphism group of the Hamming code vertex operator algebra, \textit{J. Algebra} {\bf 228} (2000), no. 1, 204--226. 

\bibitem{M2} M. Miyamoto, Binary Codes and Vertex Operator (Super)Algebras, \textit{J. Algebra} {\bf 181} (1996) 207--222. 

\bibitem{M3} M. Miyamoto, Representation theory of code vertex operator algebra, \textit{J. Algebra} {\bf 201} (1998) 115--150.

\bibitem{M4} M. Miyamoto, A new construction of the Moonshine vertex operator algebra over the real number field, \textit{Ann. of Math.} (2) {\bf 159} (2004), no. 2, 535--596. 

\bibitem{roman} S. Roman, \textit{Advanced Linear Algebra}, Third edition, Graduate Texts in Mathematics, {\bf 135}, Springer, 2008.

\bibitem{constantwt} E. Weiss, Linear codes of constant weight, \textit{SIAM J. Appl. Math.} {\bf 14} (1966) 106--111.
\end{thebibliography}
\end{document}